\renewcommand{\d}{{\mathrm{d}}}
\renewcommand{\diam}{{\mathrm{diam}}}
\newcommand{\alphamin}{\alpha_{\min}}
\newcommand{\betamin}{\beta_{\min}}
\newcommand{\tildealphamin}{\tilde{\alpha}_{\min}}
\newcommand{\tildebetamin}{\tilde{\beta}_{\min}}
\newcommand{\meas}{\mc{M}}
\newcommand{\measp}{\mc{M}_+}
\newcommand{\dom}{\mathrm{dom}}
\renewcommand{\div}{\mathrm{div}}
\newcommand{\prox}{\mathrm{prox}}
\newcommand{\RadNik}[2]{{\tfrac{\d#1}{\d#2}}}
\newcommand{\preconj}[1]{\vphantom{#1}^\ast#1}
\newcommand{\cl}{\mathrm{cl}}
\newcommand{\measnrm}[1]{#1(\Omega)}
\newcommand{\h}{h_0}
\newcommand{\g}{h_1}
\newcommand{\hB}{h_{01}}
\newcommand{\gB}{h_{10}}
\newcommand{\Cl}{c_0}
\newcommand{\Cm}{c_{01}}
\newcommand{\Cr}{c_1}
\newcommand{\Dl}{{D_0}}
\newcommand{\Dm}{{D_{01}}}
\newcommand{\Dr}{{D_1}}
\newcommand{\SimFR}{D^{\tn{H}}}
\newcommand{\SimTV}{D^{\tn{TV}}}
\newcommand{\SimDisc}{D^{\tn{d}}}
\newcommand{\SimJS}{D^{\tn{JS}}}
\newcommand{\SimChi}{D^{\chi^2}}
\newcommand{\SimE}[1]{D^{\tn{E},#1}}
\newcommand{\SimEp}{\SimE{p}}
\DeclareMathOperator{\TVX}{TVX}
\title{A Framework for Wasserstein-$1$-Type Metrics}
\author{Bernhard Schmitzer \and Benedikt Wirth}
\date{}
\begin{document}

\maketitle
\begin{abstract}
We propose a unifying framework for generalising the Wasserstein-1 metric to a discrepancy measure between nonnegative measures of different mass.
This generalization inherits the convexity and computational efficiency from the Wasserstein-1 metric,
and it includes several previous approaches from the literature as special cases.
For various specific instances of the generalized Wasserstein-1 metric we furthermore demonstrate their usefulness in applications by numerical experiments.
\end{abstract}
\tableofcontents


\section{Introduction}
\subsection{Motivation}
Optimal transport and Wasserstein metrics are becoming increasingly popular tools in many applied fields.
In particular, they provide a robust and intuitive measure of discrepancy for histograms and mass distributions,
which can for instance be exploited in image processing for image interpolation \cite{AngenentOTFlow2003}, deformation analysis \cite{OptimalTransportTangent2012}, colour transfer \cite{Ferradans-ICIP14}, cartoon-texture decomposition \cite{LellmannKantorovichRubinstein2014}, or shape averaging \cite{Solomon-siggraph-2015}.

A significant practical limitation of Wasserstein metrics is their high naive computational complexity. Various algorithms and methods for computational acceleration and efficient approximation have been proposed: a flow formulation \cite{BenamouBrenier2000}, entropic regularization \cite{Cuturi2013,BenamouIterativeBregman2015} and adaptive sparse solvers \cite{SchmitzerShortCuts2015} among others.
The Wasserstein-1 metric $W_1$ represents an exception, since it can be reformulated as a minimal cost flow problem with local constraints, leading to a significant reduction of the problem size.

Plain Wasserstein metrics can only compare nonnegative measures of equal mass, which often does not reflect the requirements of applications.
Therefore, early on, ad hoc extensions of optimal transport to unbalanced measures have been proposed (e.\,g.\ \cite{RubnerEMD-IJCV2000,Benamou-Unbalanced-2003,PeleECCV2008}). An extension of $W_1$ that retains its computational efficiency can be found in \cite{LellmannKantorovichRubinstein2014}.
More recently, unbalanced optimal transport has been studied from a dynamic \cite{DNSTransportDistances09} and geometric perspective (\cite{KMV-OTFisherRao-2015,ChizatOTFR2015,LieroMielkeSavare-HellingerKantorovich-2015a}, see also \cite{ChizatDynamicStatic2018}).

Figure\,\ref{fig:transportBio} illustrates the need for unbalanced versions of $W_1$.
It displays two consecutive frames from a microscopy video of biological cells showing the temporally evolving distribution of a specific molecule.
The optimal mass flow with respect to the $W_1$ distance would be a natural estimate of the actual molecule flux,
however, due to inhomogeneous brightness changes, image acquisition noise, and other artefacts the resulting mass flow is useless.
This is remedied if the mass is allowed to slightly change (to accommodate the above artefacts), using an unbalanced version of $W_1$, so that the resulting mass flow indeed correctly identifies the motion of several distinct biological structures.

This article provides a unified framework for unbalanced extensions of $W_1$,
thereby covering several previous approaches as special cases and providing a new family of efficient transport-based discrepancy measures.

\begin{figure}
  \centering
  \includegraphics{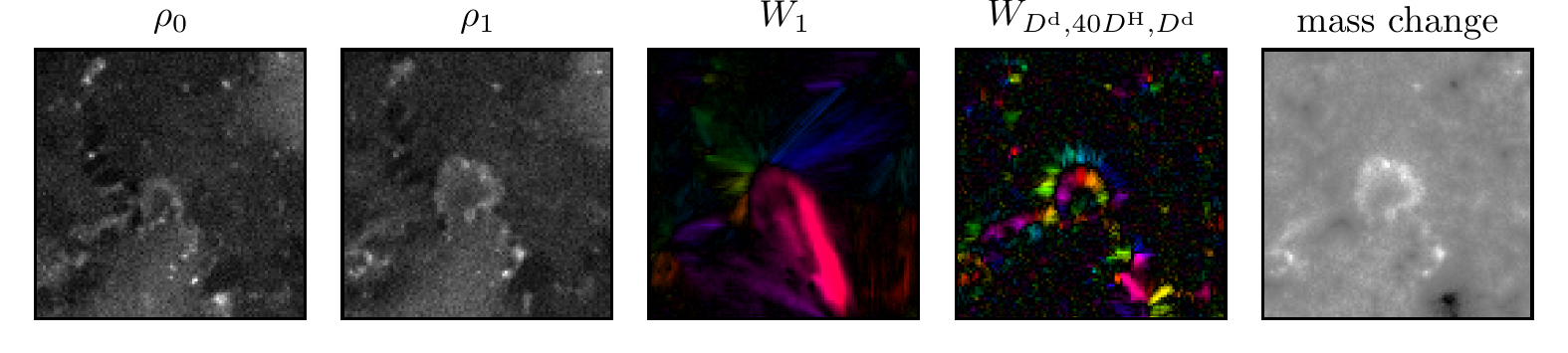}%
  \setlength\unitlength\linewidth%
  \begin{picture}(0,0)
  \put(-.587,.02){\includegraphics[width=.04\unitlength,trim=118 90 23 15,clip]{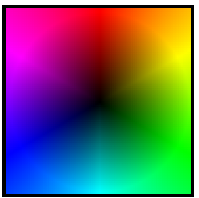}}
  \end{picture}%
  \caption[]{Extracting flows from video sequences via an unbalanced extension of $W_1$. %
  \textit{From left to right:}
  Two consecutive video frames $\rho_0$ and $\rho_1$ show the distribution of a fluorescent molecule at the boundary between two biological cells (data courtesy Hans-Joachim Schnittler, M\"unster). %
  Flow extracted from $W_1$ transport between normalized images (colour-coding according to inset). %
  Flow extracted using a particular unbalanced transport discrepancy $W_{\SimDisc,40 \SimFR, \SimDisc}$. %
  Change of mass during unbalanced transport (bright means increase). %
  }
  \label{fig:transportBio}
\end{figure}

\subsection{Overview: Old and new Wasserstein-1 metrics}
\label{sec:IntroOverview}
The Wasserstein-1 metric between two nonnegative measures $\rho_0$ and $\rho_1$ on a domain $\Omega$
(which has to fulfill certain properties but can simply be thought of as a subset of $\R^n$ for the time being)
is given by
\begin{equation}\label{eqn:W1primal}
W_1(\rho_0,\rho_1) = \inf_{\pi \in \Gamma(\rho_0,\rho_1)}\int_{\Omega \times \Omega} d(x,y)\,\d \pi(x,y)\,,
\end{equation}
where $d$ denotes the distance measure between points in $\Omega$
and $\Gamma(\rho_0,\rho_1)$ is the set of so-called couplings, that is,
nonnegative measures $\pi$ on $\Omega\times\Omega$ satisfying
\begin{equation*}
\pi(B\times\Omega)=\rho_0(B)\quad\text{ and }\quad\pi(\Omega\times B)=\rho_1(B)
\end{equation*}
for all measurable $B\subset\Omega$.
By convention, the infimum is infinite for empty $\Gamma(\rho_0,\rho_1)$, which is equivalent to $\rho_0(\Omega)\neq\rho_1(\Omega)$.
The metric $W_1(\rho_0,\rho_1)$ can be interpreted as the cost of transporting the mass $\rho_0$ to the final distribution $\rho_1$,
where the cost contribution of each mass particle is its transport distance, and $\pi(x,y)$ can be interpreted as the mass transported from $x$ to $y$.
Other optimal transport problems are obtained when replacing the distance $d$ by other nonnegative cost functions.

The above formulation is based on a variable $\pi$ that lives on the high-dimensional space $\Omega\times\Omega$,
thus making a direct implementation infeasible except for special cases.
However, $W_1$ has the particular property that it can also be expressed by the Kantorovich--Rubinstein formula \cite[Rk.~5.16]{Villani-OptimalTransport-09}
\begin{multline}\label{eqn:W1predual}
	W_1(\rho_0,\rho_1) = \sup\left\{\int_\Omega \alpha\,\d\rho_0 + \int_\Omega \beta\,\d\rho_1\right| \left. \vphantom{\int_\Omega} \alpha,\beta\text{ Lipschitz with constant 1, } \right. \\
	\left. \vphantom{\int_{\Omega}} \alpha(x) + \beta(x) \leq 0 \,\forall\, x \in \Omega\right\}\,.
\end{multline}
Typically one function is eliminated by setting $\beta=-\alpha$. Note that this represents a (computationally feasible) convex optimization problem in the variables $\alpha$ and $\beta$ that only live in $\Omega$ and only satisfy local constraints
(for the latter one needs to work in a path metric space such as $\R^n$ since then the Lipschitz constraint just amounts to ensuring $|\nabla\alpha|,|\nabla\beta|\leq1$ almost everywhere).
Such a reduction of the high-dimensional problem domain $\Omega\times\Omega$ to just $\Omega$ is only possible for $W_1$ transport
and represents a substantial decrease in memory requirements (and typically also computation time due to the smaller number of variables and constraints).
For other transport problems, the strongest dimension reduction can be achieved by considering a dynamic formulation \cite{BenamouBrenier2000}
which in addition to $\Omega$ only requires a single further (time) dimension.
In the special case that the measure $\rho_0$ is Lebesgue-continuous, a Monge map $T:\Omega\to\Omega$ exists describing the transport
so that one may try to identify $T$ instead,
however, even though the problem dimension then is the same as for $W_1$, the formulation in $T$ is highly nonlinear and nonconvex preventing a global optimization.

A natural question to ask is to what extent \eqref{eqn:W1predual} is distinguished among optimal transport metrics or whether it is just one special instance in a larger family of transport discrepancies. Put another way, one can ask how to generalize the above problem while maintaining convexity and locality of the constraints, the two requirements for an efficient implementation.
We suggest the following simple extension:
\begin{multline}\label{eqn:predualExtension}
W_{\h,\g,B}(\rho_0,\rho_1)=\sup\left\{\int_\Omega \h(\alpha)\,\d\rho_0 + \int_\Omega \g(\beta)\,\d\rho_1\right| \alpha,\beta\text{ Lipschitz with constant 1,} \\
	\left.\vphantom{\int_\Omega} (\alpha(x),\beta(x))\in B \,\forall\, x \in \Omega\right\}\,,
\end{multline}
where $\h,\g:\R\to[-\infty,\infty]$ are concave functions and $B\subset\R^2$ is a convex set
(a few more natural conditions on $\h,\g,B$ will be stated in detail later).
A specific instance of this class turns out to be given by the Kantorovich--Rubinstein distance examined in \cite{LellmannKantorovichRubinstein2014}, also known as the flat metric in geometric measure theory,
\begin{equation*}
W_{\text{KR},\lambda}(\rho_0,\rho_1)
=\sup\left\{\int_\Omega\alpha\,\d(\rho_0-\rho_1)\,\middle|\,\alpha\text{ Lipschitz with constant 1, }|\alpha|\leq\lambda\right\}\,.
\end{equation*}
This metric can for example be recovered by the choice $B=\{(\alpha,\beta)\,|\,\alpha+\beta\leq0\}$ and $\h(\alpha)=\g(\alpha)=\alpha$ if $\alpha\geq-\lambda$ and $-\infty$ else
(where the variable $\beta$ may be eliminated via $\beta=-\alpha$).

An alternative way to generalize $W_1$ is motivated by the desire to extend it to unbalanced measures.
To this end one can introduce pointwise discrepancy functionals that locally penalize growth or shrinkage of mass and are of the form
\begin{equation*}
D(\rho_0,\rho_1)=\int_\Omega c(\RadNik{\rho_0}\mu,\RadNik{\rho_1}\mu)\,\d\mu\,,
\end{equation*}
where $\mu\gg\rho_0,\rho_1$ is an arbitrary measure, $\RadNik{\rho_i}{\mu}$ is the Radon--Nikodym derivative, and $c(m_0,m_1)\geq0$ is the cost of changing the mass of a particle from $m_0$ to $m_1$
(assumed one-homogeneous in $(m_0,m_1)$ so that $D$ is independent of the choice of $\mu$).
Using such a discrepancy $D$ one can extend $W_1$ to allow for unbalanced measures $\rho_0$ and $\rho_1$, for example via $\inf_{\rho',\rho''}W_1(\rho_0,\rho')+D(\rho',\rho'')+W_1(\rho'',\rho_1)$
or $\inf_{\rho',\rho''}D(\rho_0,\rho')+W_1(\rho',\rho'')+D(\rho'',\rho_1)$ or in the more general case via
\begin{equation}\label{eqn:infConvExtension}
W_{\Dl,\Dm,\Dr}(\rho_0,\rho_1)=\inf_{\rho_0',\rho_0'',\rho_1'',\rho_1'}\Dl(\rho_0,\rho_0')+W_1(\rho_0',\rho_0'')+\Dm(\rho_0'',\rho_1'')+W_1(\rho_1'',\rho_1')+\Dr(\rho_1',\rho_1)
\end{equation}
with potentially different discrepancy functionals $\Dl$, $\Dm$, and $\Dr$.

Such `inf-convolutions' can also be constructed for more general transport distances beyond $W_1$. The models used in \cite{RubnerEMD-IJCV2000,PeleECCV2008} can be interpreted as instances of this framework, where the discrepancies $D_i$ were chosen sufficiently simple, such that \eqref{eqn:infConvExtension} remains a linear program.
Indeed, assuming without loss of generality $\rho_1(\Omega)\geq\rho_0(\Omega)$, those models can (in the continuous setting) be formulated as
\begin{equation*}
W_{\text{EMD}}(\rho_0,\rho_1)
=\inf_{\substack{\pi\geq0,\,\pi(\Omega\times\Omega)\geq\rho_0(\Omega)\\\pi(B\times\Omega)\leq\rho_0(B),\,\pi(\Omega\times B)\leq\rho_1(B)\,\forall B\subset\Omega}}
\int_{\Omega\times\Omega}d(x,y)\,\d \pi(x,y)+\lambda\mathrm{diam}\Omega|\rho_1(\Omega)-\rho_0(\Omega)|
\end{equation*}
with $\alpha\geq0$ ($\alpha=0$ in \cite{RubnerEMD-IJCV2000}).
It turns out that for $\alpha\notin(0,1)$ this can be recovered by picking $\Dl=\Dm$ to be infinite unless their arguments are identical
and $\Dr(\rho_0,\rho_1)=\int_\Omega c(\RadNik{\rho_0}\mu,\RadNik{\rho_1}\mu)\,\d\mu$ with $c(m_0,m_1)=\alpha\mathrm{diam}|m_1-m_0|$ if $\alpha\geq1$ or, if $\alpha=0$, $c(m_0,m_1)=0$ for $m_1\geq m_0$ and $\infty$ else.
In \cite{LieroMielkeSavare-HellingerKantorovich-2015a} entropy functionals were considered as local discrepancies and alternative dynamic and lifted reformulations of \eqref{eqn:infConvExtension} were given for particular choices of discrepancies and transport costs.

In this article we exploit the particular structure of $W_1$ to show that the two approaches \eqref{eqn:predualExtension} and \eqref{eqn:infConvExtension} are actually equivalent in the sense that for any admissible $(\h,\g,B)$ one can find $(\Dl,\Dm,\Dr)$ and vice versa such that
\begin{equation*}
W_{\h,\g,B}=W_{\Dl,\Dm,\Dr}\,.
\end{equation*}
\subsection{Contributions}
The main contributions of this article are the following.
\begin{itemize}
\item
We introduce and analyse the simple, but very broad and flexible new class \eqref{eqn:predualExtension} of Wasserstein-1-type discrepancies, including existence of optimizers.
In a sense it represents the most general extension of the Kantorovich--Rubinstein formula for Wasserstein-1 transport that is still convex without nonlocal constraints.
\item
We furthermore introduce and analyse the alternative generalization \eqref{eqn:infConvExtension} of the Wasserstein-1 distance,
which on the one hand represents a more complex optimization problem in more variables,
but which on the other hand allows a more straightforward interpretation of the different involved terms.
This formulation represents a general framework for Wasserstein-1-type discrepancies since it covers various unbalanced transport models from the literature.
\item
We show equivalence of both above generalizations and detail how both formulations relate to each other.
\item
We propose a discretization and feasible numerical implementation for the simpler first formulation
and perform a number of experiments showcasing the versatility of the Wasserstein-1-type discrepancy framework by producing different effects with different discrepancy variants.
This should raise awareness of the fact that the way optimal transport is extended to unbalanced measures has to be carefully chosen depending on the application.
\end{itemize}

The article is organized as follows.
The mathematical setting and notation are fixed in Section~\ref{sec:notation}.
In Section~\ref{sec:W1Extensions} we introduce in detail the $W_1$ generalizations \eqref{eqn:predualExtension} and \eqref{eqn:infConvExtension}
and prove their equivalence via convex duality in Section~\ref{sec:modelEquivalence}.
Examples and special cases are described in Section~\ref{sec:Examples}. In particular, different existing approaches fit into our framework such as the Kantorovich--Rubinstein norm from \cite{LellmannKantorovichRubinstein2014}
and a $W_1$-type variant of the Hellinger--Kantorovich distance from \cite{LieroMielkeSavare-HellingerKantorovich-2015a}.
Finally, a numerical discretization and optimization algorithm are provided in Section~\ref{sec:discretization},
followed by a number of numerical experiments in Section~\ref{sec:experiments}.
We conclude in Section~\ref{sec:conclusion}.

\subsection{Setting and notation}
\label{sec:notation}
Throughout the article we will use the following conventions.
\begin{itemize}
\item We will work in a compact path metric space $(\Omega,d)$, that is, the metric $d(x,y)$ represents the infimal length of continuous paths $\gamma:[0,1]\to\Omega$ connecting $x$ and $y$.
For instance, $\Omega$ could be a closed bounded convex subset of a normed vector space such as $\R^n$ or a compact geodesically convex subset of a Riemannian manifold.
\item For a topological space $X$ we denote by $C(X)$ the set of continuous bounded real functions over $X$ equipped with the $\sup$-norm.
\item For a metric space $X$ we write $\Lip(X)\subset C(X)$ for the set of 1-Lipschitz functions on $X$.
\item For a compact space $X$ we identify the topological dual of $C(X)^m$ with the space of $\R^m$-valued Radon measures on $X$, denoted by $\meas(X)^m$, and we equip it with the weak-* topology arising from this pairing (see e.\,g.\ \cite[Def.\ 1.58]{AFP00}).
\item We write $\measp(X)$ for the nonnegative Radon measures on $X$.
\item The Dirac measure $\delta_x\in\measp(\Omega)$ at a point $x\in\Omega$ is defined for all measurable $A\subset\Omega$ via $\delta_x(A)=1$ if $x\in A$ and $\delta_x(A)=0$ else.
\item For two measures $\mu,\nu\in\meas(\Omega)$, $\mu\ll\nu$ expresses that $\mu$ is absolutely continuous with respect to $\nu$, and its Radon--Nikodym derivative will be denoted $\RadNik\mu\nu$.
\item For two measurable spaces $X$, $Y$, some $\mu \in \meas(X)$, and a measurable map $f : X \to Y$ we denote by $f_\sharp \mu \in \meas(Y)$ the push-forward of $\mu$ under $f$ defined by $f_\sharp \mu(A) = \mu(f^{-1}(A))$ for any measurable $A \subset Y$.
\item For $i=0,1$ we define the canonical projections $\Proj_{i}:\Omega\times\Omega\to\Omega$, $(x_0,x_1)\mapsto x_i$.
\item For two $\rho_0$, $\rho_1 \in \measp(\Omega)$ the set of couplings is given by
\begin{equation*}
\Gamma(\rho_0,\rho_1) = \left\{ \pi \in \measp(\Omega\times\Omega)\,\vphantom{{\Proj_0}_\sharp}\right|\left.\,{\Proj_0}_\sharp \pi = \rho_0 \text{ and } {\Proj_1}_\sharp \pi = \rho_1 \right\}\,.
\end{equation*}
Note that $\Gamma(\rho_0,\rho_1)$ is empty when $\rho_0(\Omega) \neq \rho_1(\Omega)$.
\item Let $X$ be a normed vector space with dual space $X'$. For a function $f:X\to\R\cup\{\infty\}$ the Legendre--Fenchel conjugate is given by
\begin{equation*}
f^\ast(x')=\sup_{x\in X}\langle x,x'\rangle-f(x)\,,
\end{equation*}
where $\langle\cdot,\cdot\rangle$ denotes the dual pairing.
The preconjugate of a function $f:X'\to\R\cup\{\infty\}$ will be denoted
\begin{equation*}
\vphantom{f}^\ast f(x)=\sup_{x'\in X'}\langle x,x'\rangle-f(x')\,.
\end{equation*}
\item For two functions $f,g:X\to\R\cup\{\infty\}$ on a vector space $X$, their infimal convolution is denoted $$f\,\square\,g:X\to\R\cup\{\infty\}\,,\quad(f\,\square\,g)(x)=\inf\{f(x-y)+g(y)\,|\,y\in X\}\,.$$
\item The indicator function $\iota_C:X\to\R\cup\{\infty\}$ of a set $C\subset X$ is defined as $\iota_C(x)=0$ if $x\in C$ and $\iota_C(x)=\infty$ else.
\end{itemize}

\begin{remark}[Choice of metric space]
The property $\alpha\in\Lip(\Omega)$ of a function $\alpha:\Omega\to\R$ is usually a global one;
to verify it one needs to assert $\alpha(x)-\alpha(y)\leq d(x,y)$ for all pairs $(x,y)\in\Omega\times\Omega$.
However, in path metric spaces $\Omega$, the 1-Lipschitz property reduces to a local condition on $\alpha$ that only needs to be checked for all $x\in\Omega$
(on $\R^n$, for instance, $\alpha\in\Lip(\R^n)$ is equivalent to $|\nabla\alpha(x)|\leq1$ for almost every $x\in\R^n$).
This reduction of a pointwise constraint on $\Omega\times\Omega$ to a pointwise constraint only on $\Omega$ underlies the computational efficiency of $W_1$,
and for this reason we here restrict to path metric spaces $\Omega$.

On the other hand, the choice of a compact space is rather technically motivated.
Indeed, for compact $\Omega$, a predual space to $\meas(\Omega)$ is simply $C(\Omega)$ and thus contains $\Lip(\Omega)$.
Therefore, formulations as in \eqref{eqn:W1predual} can be obtained via standard duality arguments.
On noncompact $\Omega$, by contrast, considering the space $C_0(\Omega)$ of continuous functions that vanish on the boundary and at infinity as the predual to $\meas(\Omega)$,
we have $\Lip(\Omega)\not\subset C_0(\Omega)$ so that additional approximation arguments are required (see e.\,g.\ the discussion in \cite[p.99]{Villani-OptimalTransport-09} or the more direct application of the Hahn--Banach Theorem \cite{Ed10}).
\end{remark}


\section{Static formulations of Wasserstein-1-type discrepancies}
\label{sec:W1Extensions}
In this section we consider two different extensions of the classical $W_1$ metric and subsequently demonstrate their equivalence.

\subsection{Wasserstein-1-Type discrepancies}
One may ask how the $W_1$ metric \eqref{eqn:W1predual} can be generalized without giving up its efficiency,
particularly the convexity of the problem and its low dimensional variables and constraints.
This motivates the following definition of a discrepancy between two nonnegative measures $\rho_0$ and $\rho_1$.

\begin{definition}[$W_{\h,\g,B}$-discrepancy]
  \label{def:GeneralizedPrimalProblem}
  Consider a convex set $B\subset{\R^2}$ and concave, upper semi-continuous functions $\h,\g:\R\to\R\cup\{-\infty\}$.
  For $\rho_0,\rho_1 \in \measp(\Omega)$ we define
  \begin{align}
  E_{\h,\g,B}^{\rho_0,\rho_1}(\alpha,\beta)&=\begin{cases}\int_\Omega \h(\alpha(x))\,\d\rho_0(x)\\ \quad+ \int_\Omega \g(\beta(x))\,\d\rho_1(x)&\text{if }\alpha, \beta \in \Lip(\Omega)\text{ with }(\alpha(x),\beta(x)) \in B \ \forall\, x \in\Omega\,,\\-\infty&\text{else,}\end{cases}\nonumber\\
  W_{\h,\g,B}(\rho_0,\rho_1)&=\sup_{\alpha,\beta\in C(\Omega)} E_{\h,\g,B}^{\rho_0,\rho_1}(\alpha,\beta)\,.\label{eq:GeneralizedPrimalProblem}
  \end{align}
\end{definition}

\begin{remark}[Inhomogeneous versions]
In principle one could also allow $\h$, $\g$, and $B$ to have spatially varying, inhomogeneous forms $\h,\g:\R\times\Omega\to\R\cup\{-\infty\}$ and $B:\Omega\to2^{\R\times\R}$.
To avoid technicalities we shall for now only consider the spatially homogeneous case.
The generalization to the inhomogeneous case will be discussed in Section~\ref{sec:Inhomogeneous}.
\end{remark}

\begin{remark}[Complexity]
Just like \eqref{eqn:W1predual}, definition \eqref{eq:GeneralizedPrimalProblem} represents a convex optimization problem
whose variables $\alpha$ and $\beta$ are functions on the low-dimensional domain $\Omega$
and satisfy three local constraints everywhere on $\Omega$, two Lipschitz constraints as well as $(\alpha(x),\beta(x))\in B$.
\end{remark}

\begin{remark}[Convexity]
As the pointwise supremum of linear functionals in $\rho_0$ and $\rho_1$, the discrepancy $W_{\h,\g,B}(\rho_0,\rho_1)$ is jointly convex in $\rho_0$ and $\rho_1$.
\end{remark}

\begin{remark}[Reduction to $W_1$ case]\label{rem:W1case}
The Wasserstein-1-distance is obviously retrieved for the choice
\begin{equation*}
\h(\alpha)=\alpha\,,\qquad
\g(\beta)=\beta\,,\qquad
B=\{(\alpha,\beta)\in\R^2\,|\,\alpha+\beta\leq0\}\,.
\end{equation*}
\end{remark}

\begin{definition}[Admissible $(\h,\g,B)$]\label{def:admissibility}
We call $(\h,\g,B)$ admissible if there exist functions $\hB,\gB:\R\to\R\cup\{-\infty\}$ such that
\begin{align}
\label{eqn:BIntersection}
B & = B_{01} \cap (B_0 \times B_1)\qquad\text{with} \\
\label{eqn:BHypograph}
B_{01} & =\left\{(\alpha,\beta)\in\R^2\,\right|\left.\alpha\leq\hB(-\beta)\right\}
	=\left\{(\alpha,\beta)\in\R^2\,\right|\left.\beta\leq\gB(-\alpha)\right\},\hspace*{-\linewidth}\\
\label{eqn:AlphaBetaMinA}
B_0 & = [\alphamin,+\infty), & \alphamin & = \inf \{ \alpha \in \R : \h(\alpha) > -\infty \}, \\
\label{eqn:AlphaBetaMinB}
B_1 & = [\betamin,+\infty), & \betamin & = \inf \{ \beta \in \R : \g(\beta) > -\infty \},
\end{align}
and $\h$, $\g$, $\hB$ or equivalently $\gB$ satisfy the following conditions (where we drop the indices):
\begin{enumerate}[1.]
\item\label{enm:convexity} $h$ is concave,
\item\label{enm:wellposedness} $h$ is upper semi-continuous,
\item\label{enm:positivity1} $h(s)\leq s$ for all $s\in\R$ and $h(0)=0$,
\item\label{enm:positivity} $h$ is differentiable at $0$ and $h'(0)=1$,
\item\label{enm:negativeMeasures} $h$ is monotonically increasing.
\end{enumerate}
Note that on their respective domains, $\hB=-\gB^{-1}(-\cdot)$ and $\gB=-\hB^{-1}(-\cdot)$.
\end{definition}

\begin{remark}[On the conditions]
The admissibility conditions are chosen as to make $W_{\h,\g,B}$ a reasonable discrepancy on $\measp(\Omega)$,
especially if two of $\h$, $\g$, and $B$ are taken as in Remark~\ref{rem:W1case}.
In particular, we ask for the following properties.
\begin{enumerate}[a.]
\item\label{enm:propUSC} $E_{\h,\g,B}^{\rho_0,\rho_1}$ should be upper semi-continuous (a natural requirement for well-posedness of optimization problem\,\eqref{eq:GeneralizedPrimalProblem}),
\item\label{enm:propNonNeg} $W_{\h,\g,B}(\rho_0,\rho_1)\geq0$ for all $\rho_0,\rho_1\in\measp(\Omega)$ and $W_{\h,\g,B}(\rho_0,\rho_1)=0$ if $\rho_0=\rho_1$,
\item\label{enm:propPos} $W_{\h,\g,B}(\rho_0,\rho_1)>0$ for $\rho_0\neq\rho_1$,
\item\label{enm:propNonNegMeas} $W_{\h,\g,B}(\rho_0,\rho_1)=\infty$ whenever $\rho_0$ or $\rho_1$ are negative,
\item\label{enm:propSWLSC} $W_{\h,\g,B}(\rho_0,\rho_1)$ should be sequentially weakly-* lower semi-continuous in $(\rho_0,\rho_1)$.
\end{enumerate}
Now to obtain corresponding conditions on $B_{01}$ we first consider the case $\h=\g=\id$ (then $B = B_{01}$).
Property~\ref{enm:propUSC} requires closedness of $B$, while property~\ref{enm:propNonNegMeas} implies $(-\infty,a]^2\subset B$ for some finite $a \in \R$.
Together with the convexity of $B$ it follows that $B_{01}$ can be expressed in the form \eqref{eqn:BHypograph}
for an upper semi-continuous, concave, monotonically increasing $\hB$.

Next, we set any two of $\h$, $\g$, and $\hB$ to the identity.
For the remaining one it is not difficult to see that
condition~\ref{enm:convexity} is equivalent to the convexity of optimization problem\,\eqref{eq:GeneralizedPrimalProblem}.
Likewise, condition~\ref{enm:wellposedness} is necessary for property~\ref{enm:propUSC} (it is also needed to make sense of the integrals in $E_{\h,\g,B}^{\rho_0,\rho_1}$)
and will in the proof of Proposition~\ref{prop:upperSemicontinuity} turn out to be also sufficient.
It is furthermore a simple exercise to show the equivalence between condition~\ref{enm:positivity1} and property~\ref{enm:propNonNeg}.
Indeed, assume $\g=\hB=\id$ (the other cases follow analogously),
then condition~\ref{enm:positivity1} implies $W_{\h,\g,B}(\rho_0,\rho_1)\geq E_{\h,\g,B}^{\rho_0,\rho_1}(0,0)=0$
as well as $W_{\h,\g,B}(\rho,\rho)=\sup_{\alpha\in C(\Omega)}\int_\Omega\h(\alpha)-\alpha\,\d\rho\leq0$ for $\rho\in\measp(\Omega)$.
Vice versa, taking $\rho=\delta_x$ for some $x\in\Omega$, $W_{\h,\g,B}(\rho,\rho)=0$ implies $\sup_{\alpha\in\R}\h(\alpha)-\alpha=0$ and thus in particular $\h\leq\id$.
Furthermore, for a contradiction assume $\h(0)<0$, then by virtue of the hyperplane separation theorem, the concavity and upper semi-continuity of $\h$ there exists $s\in\R$ and $\veps>0$ with $\h(\alpha)<s\alpha-\veps$ for all $\alpha\in\R$.
Due to $\h\leq\id$ we may assume $s\geq0$ and thus obtain $0\leq W_{\h,\g,B}(\rho,s\rho)=\sup_{\alpha\in\R}\h(\alpha)-s\alpha<0$.
Assuming now conditions~\ref{enm:convexity} to \ref{enm:positivity1} one can easily derive the equivalence of condition~\ref{enm:positivity} and property~\ref{enm:propPos}.
Indeed, taking again $\g=\hB=\id$, condition~\ref{enm:positivity} implies
that $E_{\h,\g,B}^{\rho_0,\rho_1}$ is differentiable at $(\alpha,\beta)=(0,0)$ in any direction $(\varphi,-\varphi)\in \Lip(\Omega)^2$
with $\partial_{(\alpha,\beta)}E_{\h,\g,B}^{\rho_0,\rho_1}(\alpha,\beta)(\varphi,-\varphi)=\int_\Omega\varphi\,\d(\rho_0-\rho_1)$.
Thus, $E_{\h,\g,B}^{\rho_0,\rho_1}(0,0)=0$ can only be a maximum if $\rho_0 = \rho_1$.
On the other hand, $-\h$ is convex with subgradient $\partial(-\h)(0)\supset\{-1\}$ due to condition~\ref{enm:positivity1}.
Now taking $\rho_0\in\measp(\Omega)$ and $\rho_1=s\rho_0$ for some $s\geq0$ with $s\neq1$,
property~\ref{enm:propPos} implies the existence of some $\alpha\in \Lip(\Omega)$
with $0<E_{\h,\g,B}^{\rho_0,\rho_1}(\alpha,-\alpha)\leq-\int_\Omega(\partial(-\h)(0)+s)\alpha\,\d\rho_0$.
Thus, $-s\notin\partial(-\h)(0)$ and therefore $\partial(-\h)(0)=\{-1\}$ from which condition~\ref{enm:positivity1} follows.
Note further that condition~\ref{enm:positivity1} automatically implies property~\ref{enm:propNonNegMeas} since $\h$ and $\g$ are unbounded from below,
while condition~\ref{enm:negativeMeasures} may simply be assumed for $\h$ and $\g$ without loss of generality:
Indeed, suppose for instance $\h$ to be nonmonotone,
then conditions~\ref{enm:convexity} and \ref{enm:positivity1} imply the existence of a unique maximum value $\h(\bar\alpha)\geq0$.
Therefore, $E_{\h,\g,B}^{\rho_0,\rho_1}(\alpha,\beta)\leq E_{\h,\g,B}^{\rho_0,\rho_1}(\min(\alpha,\bar\alpha),\beta)=E_{h,\g,B}^{\rho_0,\rho_1}(\alpha,\beta)$
and thus $W_{\h,\g,B}=W_{h,\g,B}$ for the monotonically increasing $h(\alpha)=\h(\min(\alpha,\bar\alpha))$.

Finally, the structure \eqref{eqn:BIntersection} of the set $B$ is necessary for property~\ref{enm:propSWLSC} (that construction~\eqref{eqn:BIntersection} actually implies property~\ref{enm:propSWLSC} will later follow from Corollary \ref{cor:equivalenceStatic}).
For instance, take $\g=\id$ and let $\rho_1=\delta_x$ and $\rho_0^n=\frac1n\rho_1\to0$ as $n\to\infty$.
If $\gB(-\bar\alpha)>\gB(-\alphamin)$ for some $\bar\alpha < \alphamin$, then
\begin{multline*}
W_{\h,\g,B_{01}}(0,\rho_1)
\geq E_{\h,\g,B_{01}}^{0,\rho_1}(\bar\alpha,\gB(-\bar\alpha))
=\gB(-\bar\alpha)\\
>\gB(-\alphamin)
\geq\liminf_{n\to\infty}\sup_{\alpha\geq\alphamin}\tfrac{\h(\alpha)}n+\gB(-\alpha)
=\liminf_{n\to\infty}W_{\h,\g,B_{01}}(\rho_0^n,\rho_1)\,.
\end{multline*}
\end{remark}

Without explicit mention we will in the following always assume $(\h,\g,B)$ to be admissible.
The class of $W_{\h,\g,B}$-discrepancies is natural to consider and allows to extend the classical $W_1$ distance to unbalanced measures as we will see.
Several previously introduced extensions of the $W_1$-distance (as well as $W_1$ itself) can be shown to fall into this category.
In Section \ref{sec:Examples} some examples, both already well-known and new variants, will be discussed in more detail.

\begin{remark}[Discrepancy bounds]
The conditions on $\h$, $\g$, and $B$ imply $W_{\h,\g,B}(\rho_0,\rho_1)\leq W_1(\rho_0,\rho_1)$ for all $\rho_0,\rho_1\in\measp(\Omega)$.
\end{remark}

\begin{remark}[Non-existence of optimizers]
Unfortunately, maximizers of $E_{\h,\g,B}^{\rho_0,\rho_1}$ do not exist in general.
For instance, consider the relevant special case of $\h(\alpha)=\frac{\alpha}{1+\alpha}$ for $\alpha>-1$ and $\h(\alpha)=-\infty$ else (see the Hellinger distance in Section~\ref{sec:Examples})
and set $\g=\hB=\id$ for simplicity.
For $\rho_0=\delta_x$ and $\rho_1=0$ it is easily seen that $W_{\h,\g,B}(\rho_0,\rho_1)=1$ but that $E_{\h,\g,B}^{\rho_0,\rho_1}(\alpha,\beta)<1$ for all $\alpha,\beta\in C(\Omega)$.
\end{remark}

Due to the potential non-existence of maximizers we will later also examine a dual problem formulation. However, non-existence happens rather for certain special cases.
As shown in Proposition~\ref{prop:existencePrimal}, those cases can be characterized by equations which are simple to check.

\begin{proposition}[Upper semi-continuity]
	\label{prop:upperSemicontinuity}
	Energy $E_{\h,\g,B}^{\rho_0,\rho_1}$ is upper semi-continuous on $C(\Omega)^2$.
\end{proposition}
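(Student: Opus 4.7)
The plan is to verify the defining sequential criterion for upper semi-continuity: for any $(\alpha_n,\beta_n)\to(\alpha,\beta)$ in $C(\Omega)^2$ (uniform convergence), show $\limsup_n E_{\h,\g,B}^{\rho_0,\rho_1}(\alpha_n,\beta_n)\leq E_{\h,\g,B}^{\rho_0,\rho_1}(\alpha,\beta)$. We may assume the $\limsup$ is finite (otherwise the conclusion is trivial) and, after passing to a subsequence that realises it as a genuine limit, that every $(\alpha_n,\beta_n)$ is admissible in the sense of the definition, i.e.\ $\alpha_n,\beta_n\in\Lip(\Omega)$ and $(\alpha_n(x),\beta_n(x))\in B$ for all $x\in\Omega$.

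The next step is to show that the admissibility passes to the limit. The 1-Lipschitz property is preserved under pointwise (hence uniform) convergence: passing to the limit in $\alpha_n(x)-\alpha_n(y)\leq d(x,y)$ gives the same inequality for $\alpha$, and analogously for $\beta$. For the constraint $(\alpha(x),\beta(x))\in B$, I would invoke that the admissibility decomposition $B=B_{01}\cap(B_0\times B_1)$ makes $B$ closed in $\R^2$: $B_0,B_1$ are closed half-lines by definition, and $B_{01}$ is the hypograph of the upper semi-continuous concave function $\hB$ (equivalently $\gB$), hence closed. Pointwise convergence of $(\alpha_n(x),\beta_n(x))$ to $(\alpha(x),\beta(x))$ therefore preserves membership in $B$ for each $x\in\Omega$, so $(\alpha,\beta)$ is admissible as well.

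It remains to control the two integrals. Uniform convergence gives a constant $M$ with $\|\alpha_n\|_\infty\leq M$ for all $n$, and admissibility condition\,3 yields $\h(\alpha_n(x))\leq\alpha_n(x)\leq M$ pointwise; since $\rho_0\in\measp(\Omega)$ is a finite Radon measure on compact $\Omega$, this provides an integrable majorant. Upper semi-continuity of $\h$ together with the pointwise convergence $\alpha_n(x)\to\alpha(x)$ gives $\limsup_n\h(\alpha_n(x))\leq\h(\alpha(x))$ for every $x$. Hence the reverse Fatou lemma applies and yields
\begin{equation*}
\limsup_{n\to\infty}\int_\Omega\h(\alpha_n)\,\d\rho_0
\leq\int_\Omega\limsup_{n\to\infty}\h(\alpha_n)\,\d\rho_0
\leq\int_\Omega\h(\alpha)\,\d\rho_0.
\end{equation*}
The analogous estimate for $\g$, $\beta_n$, and $\rho_1$ holds by symmetry, and summing the two bounds closes the argument.

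There is no substantial obstacle; the only delicate point is ensuring that the pointwise constraint $(\alpha(x),\beta(x))\in B$ is preserved in the limit, which is precisely what the upper semi-continuity of $\hB$ (built into the admissibility of $B$ through its hypograph representation) guarantees. Both the Lipschitz passage to the limit and the reverse Fatou application are then routine.
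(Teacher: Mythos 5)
Your proof is correct and follows essentially the same route as the paper: closedness of $\Lip(\Omega)$ and of $B$ (the latter justified via the hypograph representation with upper semi-continuous $\hB$) to pass the constraints to the limit, then an upper Fatou-type argument combined with upper semi-continuity of $\h$ and $\g$. The only (harmless) difference is technical: you apply the reverse Fatou lemma directly with the constant majorant $M\geq\h(\alpha_n)$ obtained from uniform boundedness and $\h\leq\id$, whereas the paper splits off the linear part, applies Fatou to the nonpositive integrand $\h(\alpha_n)-\alpha_n$, and uses weak-* continuity of the pairing for $\int\alpha_n\,\d\rho_0$.
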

\begin{proof}
Let $(\alpha_n,\beta_n)\to(\alpha,\beta)$ in $C(\Omega)^2$ with $E_{\h,\g,B}^{\rho_0,\rho_1}(\alpha_n,\beta_n)>-\infty$ for all $n$ sufficiently large (else there is nothing to show).
Due to the closedness of $\Lip(\Omega)$ and $B$ we have $(\alpha,\beta)\in\Lip(\Omega)^2$ as well as $(\alpha(x),\beta(x))\in B$ for all $x\in\Omega$.
Finally
\begin{multline*}
\limsup_{n\to\infty}\int_\Omega \h(\alpha_n)\,\d\rho_0
=\limsup_{n\to\infty}\int_\Omega \h(\alpha_n)-\alpha_n\,\d\rho_0+\int_\Omega\alpha_n\,\d\rho_0\\
\leq\int_\Omega\limsup_{n\to\infty}\h(\alpha_n)-\alpha_n\,\d\rho_0+\lim_{n\to\infty}\int_\Omega\alpha_n\,\d\rho_0
\leq\int_\Omega \h(\alpha)-\alpha\,\d\rho_0+\int_\Omega\alpha\,\d\rho_0
=\int_\Omega \h(\alpha)\,\d\rho_0\,,
\end{multline*}
where we have used Fatou's lemma (noting that $\h(\alpha_n)-\alpha_n\leq0$), the upper semi-continuity of $\h$, as well as the continuity of the dual pairing between $\measp(\Omega)$ and $C(\Omega)$.
Analogously, $\limsup_{n\to\infty}\int_\Omega \g(\beta_n)\,\d\rho_1\leq\int_\Omega \g(\beta)\,\d\rho_1$, concluding the proof.
\end{proof}

\begin{proposition}[Existence of optimizers]
	\label{prop:existencePrimal}
	Let $\rho_0,\rho_1\in\measp(\Omega)$.
	\begin{itemize}
	\item $W_{\h,\g,B}(\rho_0,\rho_1)=\infty$ if and only if $\sup_{(\alpha,\beta)\in B}\measnrm{\rho_0}\,\h(\alpha)+\measnrm{\rho_1}\,\g(\beta)=\infty$ (in which case there are no maximizers of $E_{\h,\g,B}^{\rho_0,\rho_1}$).
	\item A maximizer of $E_{\h,\g,B}^{\rho_0,\rho_1}$ exists
	if and only if there exist $\alpha^*,\beta^*\in(-\infty,0]$ with
	\begin{align*}
	\measnrm{\rho_0}\,(-\h\circ\hB)'(-\beta^*)&\geq\measnrm{\rho_1}\,(-\g)'(\beta^*)\,\\
	\text{or}\quad
	\measnrm{\rho_1}\,(-\g\circ\gB)'(-\alpha^*)&\geq\measnrm{\rho_0}\,(-\h)'(\alpha^*)\,,
	\end{align*}
	where the prime refers to a favourably chosen element of the subgradient.
	\item A maximizer of $E_{\h,\g,B}^{\rho_0,\rho_1}$ exists if and only if $\sup_{(\alpha,\beta)\in B}\h(\alpha)\measnrm{\rho_0}+\g(\beta)\measnrm{\rho_1}$ has a maximizer.
	\end{itemize}
\end{proposition}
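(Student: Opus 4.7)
Plan. All three claims concern the reduction of the infinite-dimensional sup $W_{\h,\g,B}$ to the finite-dimensional concave maximization
$S := \sup_{(\alpha,\beta)\in B} \measnrm{\rho_0}\,\h(\alpha) + \measnrm{\rho_1}\,\g(\beta)$
obtained by restricting to constant competitors. My plan is to prove the bullets in the order (1), (3), (2), with bullet (3) being the central piece.

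For bullet (1), the inequality $W_{\h,\g,B} \geq S$ is immediate since any constant pair $(\alpha,\beta)\equiv(a,b)$ with $(a,b)\in B$ is 1-Lipschitz and pointwise in $B$. For the converse, I would start from a maximizing sequence $(\alpha_n,\beta_n)$ with $E_{\h,\g,B}^{\rho_0,\rho_1}(\alpha_n,\beta_n)\to\infty$, set $A_n=\max_\Omega\alpha_n$, $B_n=\max_\Omega\beta_n$, and use monotonicity of $\h,\g$ (condition~\ref{enm:negativeMeasures}) to dominate the integrals by $\measnrm{\rho_0}\h(A_n)+\measnrm{\rho_1}\g(B_n)$. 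Picking $x_n$ where $\alpha_n$ attains its maximum, feasibility gives $(A_n,\beta_n(x_n))\in B$, and the 1-Lipschitz bound yields $\beta_n(x_n)\geq B_n-\diam\Omega$. By monotonicity of $\hB$ this produces $(A_n,B_n-\diam\Omega)\in B_{01}$, and the symmetric argument through $\gB$ yields a point of $B$ whose $S$-value differs from $\measnrm{\rho_0}\h(A_n)+\measnrm{\rho_1}\g(B_n)$ only by a uniformly bounded $\diam\Omega$-correction. Hence $S=\infty$. The non-existence of a maximizer when $W=\infty$ is then automatic from Proposition~\ref{prop:upperSemicontinuity} applied to any candidate.

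For bullet (3), I would split into two cases. If $\alphamin$ and $\betamin$ are both finite, then $B$ is bounded (since $\alpha\leq\hB(-\beta)\leq-\beta$ and $\beta\geq\betamin$), so on the one hand $S$ is attained on the compact convex $B$ by upper semi-continuity of $\measnrm{\rho_0}\h+\measnrm{\rho_1}\g$, and on the other hand the feasible set $\{(\alpha,\beta)\in\Lip(\Omega)^2:(\alpha(x),\beta(x))\in B\ \forall x\}$ is uniformly bounded and equicontinuous, so Arzel\`a--Ascoli extracts a uniform limit of a maximizing sequence, which Proposition~\ref{prop:upperSemicontinuity} promotes to a maximizer of $E$. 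In the unbounded case, monotonicity of $\h,\g,\hB$ allows one to restrict without loss to pairs on the boundary $\alpha=\hB(-\beta)$, reducing to a 1D Lipschitz problem; a recession analysis of the resulting concave functional, based on conditions~\ref{enm:positivity1} and~\ref{enm:positivity}, then shows that an escape to infinity costs the same whether we vary $\beta(\cdot)$ as a function or only as a constant, so the attainment of $W$ and of $S$ is controlled by the same coercivity.

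For bullet (2), granted bullet (3) we need only characterise attainment of the 2D problem $S$. Parameterising the boundary of $B_{01}$ by $\alpha=\hB(-\beta)$ reduces $S$ to the 1D concave maximization of $F(\beta)=\measnrm{\rho_0}(\h\circ\hB)(-\beta)+\measnrm{\rho_1}\g(\beta)$ on $\beta\geq\betamin$, and alternatively $\beta=\gB(-\alpha)$ gives $G(\alpha)=\measnrm{\rho_0}\h(\alpha)+\measnrm{\rho_1}(\g\circ\gB)(-\alpha)$ on $\alpha\geq\alphamin$. Fermat/KKT for concave maximization on a half-line then yields that a maximum exists iff the superdifferential of $F$ contains $0$ at some interior $\beta^*$, or an element of the appropriate sign at the boundary; unpacking this via the chain rule for subgradients reproduces $\measnrm{\rho_0}(-\h\circ\hB)'(-\beta^*)\geq\measnrm{\rho_1}(-\g)'(\beta^*)$ with a favourably chosen subgradient, and analogously for $G$. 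The sign restriction $\alpha^*,\beta^*\leq0$ selects the branch on which the maximum is actually located, since $F'(0)=\measnrm{\rho_1}-\measnrm{\rho_0}$ and $G'(0)=\measnrm{\rho_0}-\measnrm{\rho_1}$ have opposite signs. The hard part is the unbounded case of bullet (3): a direct Arzel\`a--Ascoli argument fails because 1-Lipschitz functions are only compact modulo constants, so one must use the recession structure of $\h,\g$ to rule out runaway means along maximizing sequences; I expect the cleanest resolution to go either through the dual formulation or through an explicit perturbative lift from a maximizer of $S$ to a maximizer of $E$ along the $W_1$-like Lipschitz wiggle.
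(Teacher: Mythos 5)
Your overall strategy---reduce to the constant-competitor problem $S=\sup_{(\alpha,\beta)\in B}\measnrm{\rho_0}\,\h(\alpha)+\measnrm{\rho_1}\,\g(\beta)$, treat the compact case by Arzel\`a--Ascoli together with Proposition~\ref{prop:upperSemicontinuity}, and extract the subgradient condition from one-dimensional concave KKT along the boundary parametrisation $\alpha=\hB(-\beta)$---is exactly the skeleton of the paper's proof. However, there is a genuine gap precisely where the proposition has its content: the non-compact case, i.e.\ showing that when the subgradient condition holds, a maximizing sequence for $E_{\h,\g,B}^{\rho_0,\rho_1}$ cannot escape with $\alpha_n\to+\infty$ and $\beta_n\to-\infty$ uniformly. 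You explicitly defer this step (``I expect the cleanest resolution to go either through the dual formulation or through an explicit perturbative lift''), and the claim that ``an escape to infinity costs the same whether we vary $\beta(\cdot)$ as a function or only as a constant'' is an assertion, not an argument. The paper closes exactly this hole with an explicit monotone shift: given $\beta^*$ with $\measnrm{\rho_0}(-\h\circ\hB)'(-\beta^*)\geq\measnrm{\rho_1}(-\g)'(\beta^*)$ and a runaway sequence with $\max_\Omega\beta_n<\beta^*-\Delta$, one replaces $(\alpha_n,\beta_n)$ by $(\hB(-(\beta_n+\Delta)),\beta_n+\Delta)$; feasibility and the Lipschitz bound survive because $\hB(-\cdot)$ is a contraction on $(-\infty,0]$, and the convexity of $-\h\circ\hB$ and $-\g$ combined with the hypothesis shows the energy does not decrease. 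This bounds the maximizing sequence, after which Arzel\`a--Ascoli and upper semi-continuity finish. The converse (no suitable $\alpha^*,\beta^*$ implies no maximizer) is the mirror-image downward shift applied to a putative maximizer, and the third bullet follows by running both shifts restricted to constants. Neither direction follows from your one-dimensional KKT analysis alone, because attainment of $S$ does not by itself control the spatially varying mean of $(\alpha_n,\beta_n)$.

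A smaller but real slip in your first bullet: the point $(A_n,B_n-\diam\Omega)$ need not lie in $B$ (one can have $B_n-\diam\Omega<\betamin$), and even when it does, $\g(B_n)-\g(B_n-\diam\Omega)$ is not uniformly bounded: admissibility forces $\g'\geq 1$ on $(-\infty,0)$, and $\g$ may blow up to $-\infty$ near $\betamin$ (as in the Hellinger case), so the ``uniformly bounded $\diam\Omega$-correction'' can fail for that corner. The $\diam\Omega$-correction is legitimate only when applied to the variable that diverges to $+\infty$, where concavity together with $h(0)=0$, $h'(0)=1$ makes $h$ a contraction; this is why the paper evaluates at $(\check\alpha_n,\hat\beta_n)$, i.e.\ the \emph{minimum} of the escaping $\alpha_n$ paired with the maximum of $\beta_n$. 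The fix is a one-line case distinction, but as written the claim is false.
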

\begin{proof}
For a function $f \in C(\Omega)$ we abbreviate $\hat f=\max_{x\in\Omega} f(x)$, $\check f=\min_{x\in\Omega} f(x)$.
Throughout the following, let $\alpha_n,\beta_n\in C(\Omega)$ denote a maximising sequence
and assume without loss of generality that $\hat\alpha_n\geq\hat\beta_n$ for $n$ large enough (else we may simply swap the roles of $\h,\alpha$ and $\g,\beta$).

As for the first statement, let us show that $W_{\h,\g,B}(\rho_0,\rho_1)=\infty$ implies the divergence of $\sup_{\alpha,\beta\text{ constant}}E_{\h,\g,B}^{\rho_0,\rho_1}(\alpha,\beta)$ (the converse implication is trivial).
Indeed, we must have $\h(\hat \alpha_n)\to\infty$ and thus $\alpha_n(x)\to\infty$ for all $x \in \Omega$ due to the Lipschitz constraint. Then for $n$ sufficiently large,
\begin{multline*}
\int_\Omega \h(\alpha_n(x))\,\d\rho_0(x)+\int_\Omega \g(\beta_n(x))\,\d\rho_1(x) \\
\leq \h(\check\alpha_n)\measnrm{\rho_0}+\g(\hat\beta_n)\measnrm{\rho_1}+(\h(\hat\alpha_n)-\h(\check\alpha_n))\measnrm{\rho_0}\,.
\end{multline*}
Using $(\check\alpha_n,\hat\beta_n)\in B$ and $\h(\hat\alpha_n)-\h(\check\alpha_n)\leq\hat\alpha_n-\check\alpha_n\leq\diam\Omega$ (note that $\h$ is a contraction on $[0,\infty)$) we indeed obtain
$E_{\h,\g,B}^{\rho_0,\rho_1}(\check\alpha_n,\hat\beta_n)\geq E_{\h,\g,B}^{\rho_0,\rho_1}(\alpha_n,\beta_n)-\diam\Omega\measnrm{\rho_0}\to\infty$.

As for the second statement, note first that $-\h \circ \hB$ is convex and that $-(-\h)'(\hB(\beta)) \cdot (-\hB)'(\beta) \in \partial (-\h \circ \hB)(\beta)$, where the prime indicates an arbitrary subgradient element.
Assume now the existence of a suitable $\beta^*$.
For a contradiction, suppose $\check\alpha_n\to\infty$ and $\hat\beta_n\to-\infty$ (both are equivalent due to $\alpha+\beta\leq0$ for all $(\alpha,\beta)\in B$).
Then for any $\Delta>0$ and $n$ large enough, $\hat\beta_n<\beta^*-\Delta$.
Now define $\tilde\beta_n(x)=\beta_n(x)+\Delta$ and $\tilde\alpha_n(x)=\hB(-\tilde\beta_n(x))$.
Note that $\tilde\alpha_n\in\Lip(\Omega)$ (since $\tilde\beta_n < \beta^* \leq 0$ and $\hB$ is a contraction on $[0,\infty)$) with $\tilde\alpha_n\geq\hB(-\beta_n)+\Delta(-\hB)'(-\tilde\beta_n)\geq\alpha_n+\Delta(-\hB)'(-\beta^*)$. We have
\begin{align*}
E_{\h,\g,B}^{\rho_0,\rho_1}(\alpha_n,\beta_n)
&=\int_\Omega \h(\alpha_n(x))\,\d\rho_0(x)+\int_\Omega \g(\beta_n(x))\,\d\rho_1(x)\\
&\leq\int_\Omega \h(\tilde\alpha_n)+(-\h)'(\tilde\alpha_n)\Delta(-\hB)'(-\beta^*)\,\d\rho_0
+\int_\Omega \g(\tilde\beta_n)+(-\g)'(\tilde\beta_n)\Delta\,\d\rho_1\\
&\leq\int_\Omega \h(\tilde\alpha_n)\,\d\rho_0+(-\h)'(\hB(-\beta^*))\Delta(-\hB)'(-\beta^*)\measnrm{\rho_0}\\
&\qquad+\int_\Omega \g(\tilde\beta_n)\,\d\rho_1+(-\g)'(\beta^*)\Delta\measnrm{\rho_1}\\
&=\int_\Omega \h(\tilde\alpha_n)\,\d\rho_0+\int_\Omega \g(\tilde\beta_n)\,\d\rho_1 \\
& \qquad -\Delta\left[(-\h\circ\gB)'(-\beta^*)\measnrm{\rho_0}-(-\g)'(\beta^*)\measnrm{\rho_1}\right]\\
&\leq E_{\h,\g,B}^{\rho_0,\rho_1}(\tilde\alpha_n,\tilde\beta_n)\,.
\end{align*}
Thus, $(\tilde\alpha_n,\tilde\beta_n)$ is an even better maximising sequence so that
we may assume the maximising sequence $\alpha_n,\beta_n\in C(\Omega)$ to be uniformly bounded with $\beta_n\geq\hat\beta_n-\diam\Omega\geq\beta^*-\diam\Omega$ and $\alpha_n\leq-\beta_n$.
Since $\alpha_n,\beta_n\in\Lip(\Omega)$, the sequence is equicontinuous and converges (up to a subsequence) against some $(\alpha,\beta)\in C(\Omega)^2$.
By the upper semi-continuity of the energy, this must be a maximizer.
The argument for a suitable $\alpha^*$ is analogous.

For the converse implication assume $\measnrm{\rho_0}\,(-\h\circ\hB)'(-\beta^*)<\measnrm{\rho_1}\,(-\g)'(\beta^*)$ for all $\beta^*\in(-\infty,0]$
(the proof is analogous if the other condition is violated).
Taking $\beta^*=0$, this implies $\measnrm{\rho_0}>\measnrm{\rho_1}$.
Let $(\alpha,\beta)\in C(\Omega)^2$ be a maximizer and choose $\Delta>\max\{\hat\beta,0\}$.
We now set $\tilde\beta=\beta-\Delta$, $\tilde\alpha(x)=\hB(-\tilde\beta(x))$ (note that again $\tilde\alpha\in\Lip(\Omega)$) and obtain as before
\begin{multline*}
E_{\h,\g,B}^{\rho_0,\rho_1}(\alpha,\beta)
=\int_\Omega \h(\alpha)\,\d\rho_0+\int_\Omega \g(\beta)\,\d\rho_1\\
\leq\int_\Omega \h(\tilde\alpha)\,\d\rho_0-(-\h)'(\hB(-\check{\tilde\beta}))\Delta(-\hB)'(-\check{\tilde\beta})\measnrm{\rho_0}
+\int_\Omega \g(\tilde\beta)\,\d\rho_1-(-\g)'(\check{\tilde\beta})\Delta\measnrm{\rho_1}\\
=\int_\Omega \h(\tilde\alpha)\,\d\rho_0+\int_\Omega \g(\tilde\beta)\,\d\rho_1
+\Delta\left[(-\h\circ\gB)'(-\check{\tilde\beta})\measnrm{\rho_0}-(-\g)'(\check{\tilde\beta})\measnrm{\rho_1}\right]
<E_{\h,\g,B}^{\rho_0,\rho_1}(\tilde\alpha,\tilde\beta)\,,
\end{multline*}
contradicting the optimality of $(\alpha,\beta)$.

By repeating the arguments for the second statement under restriction to spatially constant $(\alpha,\beta) \in \Lip(\Omega)^2$, we find that the existence conditions of the second statement are equivalent to existence of maximizers for $\sup_{(\alpha,\beta)\in B}\h(\alpha)\measnrm{\rho_0}+\g(\beta)\measnrm{\rho_1}$.
\end{proof}

There may be some redundancy in the choice of $\h$, $\g$, and $B$.
In detail, it turns out that in particular cases the model can be simplified by eliminating the constraint set $B$ and the variable $\beta$.
Later, this will also allow to simplify some infimal convolution-type discrepancy measures (cf.~Corollary \ref{cor:reductionInfimalConv} and Section \ref{sec:unbalancedExamples}).

\begin{proposition}[Model reduction]
	\label{prop:modelReduction}
	Let $\gamma>0$ (possibly $\gamma = +\infty$), $\rho_0,\rho_1\in\measp(\Omega)$, and abbreviate
	\begin{equation*}
	B(a,b)=\{(\alpha,\beta)\in\R^2\,|\,\alpha+\beta\leq0\}\cap
		\left( [a,+\infty)\times [b,+\infty) \right) \,.
	\end{equation*}
	\begin{itemize}
	\item If $\gB(\alpha)=\min \{ \alpha,\gamma \}$ for all $\alpha>0$ (or equivalently $\hB(\beta)=\beta-\iota_{[-\gamma,\infty)}(\beta)$ for $\beta<0$), then $W_{\h,\g,B}=W_{\h\circ\hB,\g,B(\tildealphamin,\betamin)}$ with
	\begin{equation*}
		\tildealphamin = -\gB(-\alphamin) = \max\{\alphamin,-\gamma\}\,.
	\end{equation*}
	\item If $\hB(\beta)=\min \{ \beta,\gamma \}$ for all $\beta>0$ (or equivalently $\gB(\alpha)=\alpha-\iota_{[-\gamma,\infty)}(\alpha)$ for $\alpha<0$), then $W_{\h,\g,B}=W_{\h,\g\circ\gB,B(\alphamin,\tildebetamin)}$ with
	\begin{equation*}
		\tildebetamin = -\hB(-\betamin) = \max\{\betamin,-\gamma\}\,.
	\end{equation*}
	\item $W_{\h,\g,B(a,b)}=\sup\left\{\int_\Omega\h(\alpha)\,\d\rho_0+\int_\Omega\g(-\alpha)\,\d\rho_1\right|\left.\vphantom{\int_\Omega}\alpha\in\Lip(\Omega),\,a\leq\alpha\leq -b\right\}$.
	\end{itemize}
\end{proposition}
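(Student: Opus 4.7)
I would tackle the three bullets in order of increasing effort. The third is a direct elimination of $\beta$: on $B(a,b)$ the constraints $\beta\geq b$ and $\alpha+\beta\leq 0$ already force $\alpha\leq -b$, while for any Lipschitz $\alpha\in[a,-b]$ the choice $\beta=-\alpha$ is itself Lipschitz and feasible. Since $\g$ is monotonically increasing by admissibility, replacing any feasible $\beta$ by $-\alpha$ can only increase $\g(\beta)$, so one may assume $\beta=-\alpha$ in the supremum, which reduces the two-variable problem to the claimed one-variable form.

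For the first bullet I would establish $W_{\h,\g,B}=W_{\h\circ\hB,\g,B(\tildealphamin,\betamin)}$ by two inequalities, repeatedly using that $\alphamin\leq 0$ (since $\h(0)=0$), whence $\gB(-\alphamin)=\min\{-\alphamin,\gamma\}=-\tildealphamin$ under the stated hypothesis. For the direction $\leq$, given feasible $(\alpha,\beta)$ for the original problem, I would set $(\tilde\alpha,\tilde\beta)=(-\beta,\beta)$: these are Lipschitz with $\tilde\alpha+\tilde\beta=0$ and $\tilde\beta\geq\betamin$, and $\tilde\alpha\geq\tildealphamin$ follows from $\beta\leq\gB(-\alpha)\leq\gB(-\alphamin)=-\tildealphamin$. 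The energy bound $\h(\alpha)\leq\h(\hB(-\beta))=(\h\circ\hB)(\tilde\alpha)$ then comes from $\alpha\leq\hB(-\beta)$ and monotonicity of $\h$.

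For the reverse inequality I would, given $(\tilde\alpha,\tilde\beta)$ feasible for the new problem, set $(\alpha,\beta)=(\hB(\tilde\alpha),\tilde\beta)$. The critical step is verifying that $\alpha$ is $1$-Lipschitz, which reduces to showing that $\hB$ is $1$-Lipschitz on $[\tildealphamin,\infty)\supset[-\gamma,\infty)$. Since $\hB$ coincides with the identity on $[-\gamma,0]$ by hypothesis, and is concave with $\hB'(0)=1$, its slope on $[0,\infty)$ cannot exceed $1$, so $\hB$ is indeed $1$-Lipschitz on the entire half-line $[-\gamma,\infty)$. The inclusion $(\alpha,\beta)\in B$ then amounts to verifying $\hB(\tilde\alpha)\geq\alphamin$, which splits into the cases $\alphamin\geq-\gamma$ (where $\hB(\tildealphamin)=\hB(\alphamin)=\alphamin$) and $\alphamin<-\gamma$ (where $\hB(\tildealphamin)=-\gamma>\alphamin$); the constraint $\alpha\leq\hB(-\beta)$ follows from monotonicity of $\hB$ applied to $\tilde\alpha\leq-\tilde\beta$, and the two energies match by construction.

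The second bullet would follow by exactly the same argument with the roles of $(\alpha,\h,\hB,\alphamin)$ and $(\beta,\g,\gB,\betamin)$ exchanged. I expect the Lipschitz verification in the reverse inequality of the first bullet to be the main obstacle, since it is the only step that genuinely uses the specific truncated-identity shape of $\hB$ (equivalently the $\min$-structure of $\gB$); all remaining feasibility checks and the energy identity are then essentially bookkeeping.
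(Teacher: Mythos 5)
Your proof is correct and takes essentially the same route as the paper's: for fixed $\beta$ the paper likewise identifies the common optimal value $\int_\Omega(\h\circ\hB)(-\beta)\,\d\rho_0+\int_\Omega\g(\beta)\,\d\rho_1$ of both problems, resting on exactly the fact you isolate as the critical step, namely that $\hB$ (equivalently $\hB(-\cdot)$) is a contraction on $[-\gamma,\infty)$ because it is the identity on $[-\gamma,0]$ and concave and increasing beyond. (Only a cosmetic slip: the inclusion should read $[\tildealphamin,\infty)\subset[-\gamma,\infty)$, which is the direction your argument actually uses.)
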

\begin{proof}
In the first case, notice that for any $\beta\in\Lip(\Omega)$ with $\beta \leq \gamma$ we also have $\tilde\alpha=\hB\circ(-\beta)\in\Lip(\Omega)$, since $\hB(-\cdot)$ is a contraction on $(-\infty,\gamma]$.
Note that if $\beta(x)> \gamma$ for some $x \in \Omega$, both energies are $-\infty$ for any $\alpha$.
Moreover, for $\tilde\alpha$ to be feasible, we need $\tilde\alpha(x) = \hB(-\beta(x)) \geq \alphamin$ for all $x \in \Omega$ (see \eqref{eqn:AlphaBetaMinA}-\eqref{eqn:AlphaBetaMinB}), which is equivalent to $-\beta(x) \geq -\gB(-\alphamin) = \tildealphamin$ (see \eqref{eqn:BHypograph}).
Thus,
\begin{multline*}
\sup_{\alpha\in C(\Omega)}E_{\h,\g,B}^{\rho_0,\rho_1}(\alpha,\beta)
=E_{\h,\g,B}^{\rho_0,\rho_1}(\tilde\alpha,\beta)\\
=E_{\h\circ\hB,\g,B(\tildealphamin,\betamin)}^{\rho_0,\rho_1}(-\beta,\beta)
=\sup_{\alpha\in C(\Omega)}E_{\h\circ\hB,\g,B(\tildealphamin,\betamin)}^{\rho_0,\rho_1}(\alpha,\beta)
\end{multline*}
from which the statement follows.
The second case follows analogously.
Finally,
\begin{align*}
W_{\h,\g,B(a,b)}
	& =\sup\left\{\int_\Omega\h(\alpha)\,\d\rho_0+\int_\Omega\g(\beta)\,\d\rho_1\right|\left.\vphantom{\int_\Omega}\alpha,\beta\in\Lip(\Omega),\,\alpha+\beta\leq0,\,a \leq \alpha,\,b \leq \beta\right\} \\
	& =\sup\left\{\int_\Omega\h(\alpha)\,\d\rho_0+\int_\Omega\g(-\alpha)\,\d\rho_1\right|\left.\vphantom{\int_\Omega}\alpha\in\Lip(\Omega),\,a\leq\alpha\leq -b\right\}\,. \qedhere
\end{align*}
\end{proof}

\begin{remark}[Wasserstein-1 metric]\label{rem:W1reduction}
	For standard $W_1$, where $\h = \g = \hB = \id$, one has $\alphamin = \betamin = -\infty$ and $B=B(\alphamin,\betamin) = \R^2$. Consequently, by virtue of Proposition \ref{prop:modelReduction}, one can eliminate one dual variable by setting $\alpha=-\beta$, as is common practice (cf.~Section \ref{sec:IntroOverview}).
\end{remark}

\subsection[Infimal convolution-type extensions of W1]{Infimal convolution-type extensions of $W_1$}\label{sec:infConvW}
In the literature typically a different approach is taken to achieve convex and efficient generalizations of the $W_1$ metric,
namely an infimal convolution-type combination of non-transport-type metrics with the Wasserstein metric.
To introduce a general class of such discrepancies we now fix a suitable family of local, non-transport-type discrepancies.
\begin{definition}[Local discrepancy]
	\label{def:LocalSimilarityMeasure}
	Let $c : \R \times \R \mapsto [0,\infty]$ satisfy the following assumptions,
	\begin{enumerate}
  \renewcommand{\theenumi}{\alph{enumi}}
	\item\label{enm:cConv} $c$ is convex, positively 1-homogeneous, and lower semi-continuous jointly in both arguments,
	\item\label{enm:strictPos} $c(m,m)=0$ for all $m\geq0$ and $c(m_0,m_1)>0$ if $m_0\neq m_1$,
  \item\label{enm:domain} $c(m_0,m_1) = \infty$ whenever $m_0<0$ or $m_1 < 0$.
	\end{enumerate}
	Then $c$ induces a discrepancy $D$ on $\measp(\Omega)$ (extended to the rest of $\meas(\Omega)$ by infinity) via
	\begin{align}
		D & : \meas(\Omega)^2 \rightarrow [0,\infty]\,, &
			(\rho_0,\rho_1) & \mapsto \int_\Omega c(\RadNik{\rho_0}{\rho},\RadNik{\rho_1}{\rho})\,\d\rho\,,
	\end{align}
	where $\rho$ is any measure in $\measp(\Omega)$ with $\rho_0$, $\rho_1 \ll \rho$ (for instance $\rho=|\rho_0|+|\rho_1|$ with $|\cdot|$ indicating the total variation measure).
	Note that due to the 1-homogeneity of $c$ this definition does not depend on the choice of the reference measure $\rho$ (see e.\,g.\ \cite[Thm.\,2]{GoffmanSerrin64}),
	for which reason we shall also use the shorter notation
	\begin{equation*}
	D(\rho_0,\rho_1)=\int_\Omega c(\rho_0,\rho_1)\,.
	\end{equation*}
\end{definition}

Several examples of local discrepancies will be provided in Section~\ref{sec:Examples}, including the squared Hellinger distance or the metric induced by the total variation, 
\begin{equation*}
D^{FR}(\rho_0,\rho_1)=\int_\Omega\left(\sqrt{\tfrac{\d\rho_0}{\d\rho}}-\sqrt{\tfrac{\d\rho_1}{\d\rho}}\right)^2\,\d\rho
\quad\text{and}\quad
D^{TV}(\rho_0,\rho_1)=\|\rho_0-\rho_1\|_\meas\,.
\end{equation*}
Below we discuss a few basic properties.

\begin{remark}[Metric properties]
It is straightforward to check that a local discrepancy with integrand $c$ is a metric on $\measp(\Omega)$ if $c$ is a metric on $[0,\infty)$.
\end{remark}

\begin{remark}[Seminorm properties]
Obviously, a local discrepancy $D$ is positively homogeneous and convex on $\meas(\Omega)^2$.
\end{remark}

\begin{proposition}[Lower semi-continuity]
	\label{prop:LSCLocDis}
	A local discrepancy $D$ is weakly-* (and thus sequentially weakly-*) lower semi-continuous on $\meas(\Omega)^2$.
\end{proposition}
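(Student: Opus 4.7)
The plan is to exhibit $D$ as a supremum of weakly-$*$ continuous linear functionals on $\meas(\Omega)^2$, from which weak-$*$ lower semi-continuity follows automatically.

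First I would dualize the integrand. By assumption \ref{enm:cConv} together with $c(1,1)=0$ from \ref{enm:strictPos}, the function $c:\R^2\to[0,\infty]$ is proper, convex, lower semi-continuous, and positively 1-homogeneous. Hence its Legendre--Fenchel conjugate $c^\ast$ is the indicator function of a closed convex set
\begin{equation*}
K=\left\{(a,b)\in\R^2\,\middle|\,am_0+bm_1\le c(m_0,m_1)\text{ for all }(m_0,m_1)\in\R^2\right\},
\end{equation*}
and biconjugation yields the pointwise representation $c(m_0,m_1)=\sup_{(a,b)\in K}(am_0+bm_1)$.

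Next I would lift this pointwise identity to the functional level, targeting
\begin{equation*}
D(\rho_0,\rho_1)=\sup\left\{\int_\Omega a\,\d\rho_0+\int_\Omega b\,\d\rho_1\,\middle|\,(a,b)\in C(\Omega)^2,\ (a(x),b(x))\in K\ \forall x\in\Omega\right\}.
\end{equation*}
Identifying the pair $(\rho_0,\rho_1)$ with a single $\R^2$-valued Radon measure $\mu\in\meas(\Omega;\R^2)$, so that $D(\rho_0,\rho_1)=\int_\Omega c(\d\mu/\d|\mu|)\,\d|\mu|$, this representation as well as the conclusion of the proposition follow directly from Reshetnyak's lower semi-continuity theorem for convex, positively 1-homogeneous integrands of vector-valued measures (see e.g.\ \cite[Thm.~2.38]{AFP00}).

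Once the dual representation is established the conclusion is immediate: for each fixed $(a,b)\in C(\Omega)^2$, the map $(\rho_0,\rho_1)\mapsto\int_\Omega a\,\d\rho_0+\int_\Omega b\,\d\rho_1$ is weakly-$*$ continuous on $\meas(\Omega)^2$ by definition of the weak-$*$ topology, and the pointwise supremum of a family of weakly-$*$ continuous functionals is weakly-$*$ lower semi-continuous; sequential lower semi-continuity is then a triviality. The main technical hurdle is the lifting step, in particular accommodating that $K$ may be unbounded (it encodes, via $c=\infty$ outside $[0,\infty)^2$ from assumption \ref{enm:domain}, the non-negativity constraint on $\rho_0,\rho_1$). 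One may invoke Reshetnyak as a black box; alternatively, a self-contained proof can be given by approximating the pointwise supremum over $K$ through a countable family of continuous selectors, using the closedness of $K$ together with the 1-homogeneity of $c$ to handle the recession directions carefully.
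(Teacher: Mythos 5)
Your proposal is correct and its core strategy coincides with the paper's: both proofs dualize the integrand via $c^\ast=\iota_K$ (the paper calls this set $B$) and exhibit $D$ as the supremum of the weak-$*$ continuous functionals $\mu\mapsto\int_\Omega u\cdot\d\mu$ over continuous selections $u$ of $K$, with the only nontrivial step being the lifting identity $\sup_u\int_\Omega u\cdot\d\mu=\int_\Omega c(\d\mu/\d|\mu|)\,\d|\mu|$, which you correctly isolate. The difference is the key lemma used for that step: the paper invokes the Bouchitt\'e--Valadier duality theorem for integral functionals with respect to the pairing $(L^1(\Omega,|\rho|)^2,L^\infty(\Omega,|\rho|)^2)$, which delivers the exact duality formula and hence lower semi-continuity along nets, as the statement claims; you propose Reshetnyak's theorem or a countable-selector construction. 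One caveat on the Reshetnyak route: as stated in \cite[Thm.~2.38]{AFP00} it is a \emph{sequential} liminf inequality on open subsets of $\R^N$, so quoted as a black box it yields neither the dual representation nor the net version of lower semi-continuity on a general compact path metric space $\Omega$ --- and the net version is what the paper actually needs later, when Proposition~\ref{prop:LSCLocDis} is fed into the Fenchel--Moreau argument of Lemma~\ref{lem:FenchelMoreau} inside the proof of Proposition~\ref{prop:PrimalDualSandwich}. To get the full statement you therefore do have to carry out your alternative, the continuous-selector approximation (or cite a duality result such as Bouchitt\'e--Valadier's), where, as you note, the unboundedness of $K$ in the recession directions encoding nonnegativity of $\rho_0,\rho_1$ is the point requiring care. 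With that step supplied, your argument is complete and equivalent to the paper's.
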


Our proof follows \cite{BoBu90}, who have shown sequential lower semi-continuity for more general functionals on $\meas(\Omega)^2$.
In case of a finite integrand $c$, sequential weak-* lower semi-continuity also directly follows from \cite[Thm.\,3]{GoffmanSerrin64}.

\begin{proof}
Let us define the sets
\begin{align*}
B&=\{u\in\R^2\,|\,u\cdot m\leq c(m_0,m_1)\text{ for all }m = (m_0,m_1) \in\R^2\}\\
\text{and}\quad
H&=\{u\in C(\Omega,\R^2)\,|\,u(x)\in B\text{ for all }x\in\Omega\}\,.
\end{align*}
Note that the set $B$ is also characterized by $c^\ast = \iota_B$.
Now consider a net $\rho^a=(\rho_0^a,\rho_1^a)$ which converges weakly-* to $\rho=(\rho_0,\rho_1)$ in $\meas(\Omega)^2$.
For an arbitrary $u\in H$ we have
\begin{equation*}
D(\rho_0^a,\rho_1^a)
=\int_\Omega c\left(\RadNik{\rho_0^a}{|\rho^a|},\RadNik{\rho_1^a}{|\rho^a|}\right)\,\d|\rho^a|
\geq\int_\Omega u(x)\cdot\d\rho^a(x)
\to\int_\Omega u(x)\cdot\d\rho\,,
\end{equation*}
where $|\cdot|$ indicates the total variation measure.
Introducing the indicator function $\iota_H:L^\infty(\Omega,|\rho|)^2 \allowbreak \to \allowbreak \{0,\infty\}$, $\iota_H(u)=0$ if $u\in H$ and $\iota_H(u)=\infty$ else,
we now have
\begin{equation*}
\sup_{u\in H}\int_\Omega u(x)\cdot\d\rho
=\preconj{\iota_H}(\RadNik{\rho}{|\rho|})
=\int_\Omega c\left(\RadNik{\rho_0}{|\rho|},\RadNik{\rho_1}{|\rho|}\right)\,\d|\rho|
=D(\rho_0,\rho_1)\,.
\end{equation*}
Indeed, the first and last equality hold by definition of the preconjugate and $D$, while the middle one is due to \cite[Thm.\,2]{BouchitteValadier88} (taking their $J\equiv0$, in which case their $k(x,(m_0,m_1))=c(m_0,m_1)$),
where the conjugation is with respect to the dual pair $(L^1(\Omega,|\rho|)^2,L^\infty(\Omega,|\rho|)^2)$.
\end{proof}

\begin{remark}[Coercivity]\label{rem:coercivity}
The convexity and positive homogeneity of $c$ imply
\begin{equation*}
D(\rho_0,\rho_1)\geq c(\measnrm{\rho_0},\measnrm{\rho_1})
\end{equation*}
via Jensen's inequality.
Furthermore, for any $\varepsilon\in(0,1)$ we have
\begin{equation*}
D(\tilde\rho,\rho)\geq c(\varepsilon,1)\measnrm{\rho}
\quad\text{and}\quad
D(\rho,\tilde\rho)\geq c(1,\varepsilon)\measnrm{\rho}
\qquad\text{for all }\rho,\tilde\rho\text{ with }\measnrm{\rho} \geq \measnrm{\tilde\rho}/\varepsilon
\end{equation*}
due to $D(\tilde\rho,\rho)\geq\measnrm{\rho} c(\measnrm{\tilde\rho}/\measnrm{\rho},1)$
and $D(\rho,\tilde\rho)\geq\measnrm{\rho} c(1,\measnrm{\tilde\rho}/\measnrm{\rho})$.
\end{remark}

\begin{definition}[Infimal convolution of discrepancies]
Let $X$ be a space and $d_1,d_2:X\times X\to[0,\infty]$ with $d_i(x_1,x_2)=0$, $i=1,2$, if and only if $x_1=x_2$. We define a new function $d_1\diamond d_2:X\times X\to[0,\infty]$ by
\begin{equation*}
(d_1\diamond d_2)(x_1,x_2)=\inf_{x\in X}d_1(x_1,x)+d_2(x,x_2)\,.
\end{equation*}
\end{definition}

\begin{remark}[Properties of $d_1\diamond d_2$]\label{rem:propertiesInfConv}\ 
\begin{enumerate}
\item The operation $\diamond$ is associative, that is, $d_1\diamond(d_2\diamond d_3)=(d_1\diamond d_2)\diamond d_3$, but not commutative.
\item For $X$ a vector space, the operation $\diamond$ can be viewed as an infimal convolution $\square$ via
\begin{equation*}
	(d_1\diamond d_2)(x_1,x_2)
	=(d_1(x_1,\cdot) \, \square \, d_2(y-\cdot,x_2))(y)
	=(d_1(x_1,y-\cdot) \, \square \, d_2(\cdot,x_2))(y)
\end{equation*}
for an arbitrary $y\in X$.
\item We have $(d_1\diamond d_2)(x_1,x_2)\leq d_i(x_1,x_2)$ for $i=1,2$.
\item The nonnegativity of $d_1$ and $d_2$ is inherited by $d_1\diamond d_2$.
Furthermore, $(d_1\diamond d_2)(x_1,x_2)=0$ if and only if $x_1=x_2$.
\item If $X$ is a vector space and $d_1$ and $d_2$ are jointly convex and positively 1-homogeneous in both arguments, then $d_1\diamond d_2$ is so as well, as is straightforward to check.
If in addition $X$ has a topology and $d_1$ and $d_2$ are lower semi-continuous, then so is $d_1\diamond d_2$.
\item\label{enm:metricInfConv} If $d_1=d_2=d$ for a metric $d$, then $d_1\diamond d_2=d$.
\end{enumerate}
\end{remark}

We are now in a position to introduce a new class of generalizations to the Wasserstein-1 metric. 
The idea is to just combine $W_1$ with local discrepancies via the above infimal convolution-type approach.
There are multiple possibilities to do so, for instance one might append the local discrepancy before or after the Wasserstein metric,
or one might want to allow mass change in between two mere mass transports.
The below definition is held general enough such that it encompasses all these possibilities.

\begin{definition}[$W_{\Dl,\Dm,\Dr}$-discrepancy]\label{def:SandwichProblem}
	For three local discrepancies $\Dl$, $\Dm$, $\Dr$ we define the following discrepancy on $\measp(\Omega)$,
	\begin{multline}
		\label{eq:SandwichProblem}
		W_{\Dl,\Dm,\Dr}(\rho_0,\rho_1)
		=(\Dl\diamond W_1\diamond\Dm\diamond W_1\diamond \Dr)(\rho_0,\rho_1)\\
		=\inf \left\{
			\Dl(\rho_0,\rho_0') + W_1(\rho_0',\rho_0'') + \Dm(\rho_0'',\rho_1'')
				+ W_1(\rho_1'',\rho_1') + \Dr(\rho_1',\rho_1)\, \right| \\
		\left.
			(\rho_0',\rho_0'',\rho_1'',\rho_1') \in \measp(\Omega)^4
			\right\}\,.
	\end{multline}
\end{definition}
\begin{remark}[Unbalanced measures] \label{rem:UnbalancedMeasures}
The above-defined discrepancy can be used to extend $W_1$ to unbalanced measures.
Indeed, the mass can change in three places, before the first $W_1$ transport (penalized by $\Dl$), after the second $W_1$ transport (in $\Dr$), and in between (in $\Dm$).
This does not only allow to accommodate mass differences in $\rho_0$ and $\rho_1$, but will also have the effect that mass may be decreased a little before transport and then re-increased again afterwards.
By choosing the extended discrete metric
\begin{equation}\label{eqn:discreteMetric}
D^d(\rho_0,\rho_1)=\begin{cases}0&\text{if }\rho_0=\rho_1\in\measp(\Omega)\,,\\\infty&\text{else}\end{cases}
\end{equation}
for some of the $\Dl,\Dm,\Dr$, mass change can be prohibited and thus the level of `local flexibility' in \eqref{eq:SandwichProblem} be varied.
Note that the model can then be simplified due to
\begin{equation*}
W\diamond D^d=D^d\diamond W=W
\end{equation*}
for any (local or nonlocal) discrepancy $W$ on $\measp(\Omega)$.
Furthermore, 
due to Remark \ref{rem:propertiesInfConv}\eqref{enm:metricInfConv}
a further model reduction results from
\begin{equation*}
W_1\diamond W_1=W_1\,.
\end{equation*}
\end{remark}

The existence of minimizers $\rho_0',\rho_1',\rho_0'',\rho_1''$ will follow automatically from the proof of Proposition~\ref{prop:PrimalDualSandwich} below,
but can also easily be proven directly, only using the sequential weak-* lower semi-continuity and coercivity of the discrepancies.

\begin{proposition}[Existence]
Given $\rho_0,\rho_1\in\measp(\Omega)$ such that $W_{\Dl,\Dm,\Dr}(\rho_0,\rho_1)$ is finite, problem\,\eqref{eq:SandwichProblem} admits minimizers $\rho_0',\rho_1',\rho_0'',\rho_1''$.
\end{proposition}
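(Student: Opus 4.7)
I would prove existence by the direct method of the calculus of variations. Let $(\rho_0^{\prime n}, \rho_0^{\prime\prime n}, \rho_1^{\prime\prime n}, \rho_1^{\prime n})_{n\in\mathbb N}\subset\measp(\Omega)^4$ be a minimizing sequence. Since the infimum is finite by assumption and all five discrepancies involved are nonnegative, each of the five summands
\[
\Dl(\rho_0,\rho_0^{\prime n}),\ W_1(\rho_0^{\prime n},\rho_0^{\prime\prime n}),\ \Dm(\rho_0^{\prime\prime n},\rho_1^{\prime\prime n}),\ W_1(\rho_1^{\prime\prime n},\rho_1^{\prime n}),\ \Dr(\rho_1^{\prime n},\rho_1)
\]
is uniformly bounded in $n$. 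In particular, finiteness of the two $W_1$ terms forces the mass identities $\|\rho_0^{\prime n}\|_{\meas}=\|\rho_0^{\prime\prime n}\|_{\meas}$ and $\|\rho_1^{\prime\prime n}\|_{\meas}=\|\rho_1^{\prime n}\|_{\meas}$, so it suffices to bound $\|\rho_0^{\prime n}\|_{\meas}$ and $\|\rho_1^{\prime n}\|_{\meas}$.

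Next I would establish uniform total-variation bounds via the coercivity Remark \ref{rem:coercivity}. Suppose for contradiction that $\|\rho_0^{\prime n}\|_{\meas}\to\infty$ along a subsequence. Setting $\varepsilon_n = \|\rho_0\|_{\meas}/\|\rho_0^{\prime n}\|_{\meas}\to 0$, the remark gives $\Dl(\rho_0,\rho_0^{\prime n})\geq c_l(\varepsilon_n,1)\,\|\rho_0^{\prime n}\|_{\meas}$, where $c_l$ is the integrand inducing $\Dl$. By property \ref{enm:strictPos} of Definition \ref{def:LocalSimilarityMeasure}, $c_l(0,1)>0$, and by the joint lower semi-continuity in \ref{enm:cConv}, $\liminf_{\varepsilon\to 0^+} c_l(\varepsilon,1)\geq c_l(0,1)>0$. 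Hence $\Dl(\rho_0,\rho_0^{\prime n})\to\infty$, contradicting boundedness. The same argument applied to $\Dr$ bounds $\|\rho_1^{\prime n}\|_{\meas}$, and the mass identities above then bound the remaining two.

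With uniform mass bounds in hand, by Banach--Alaoglu I extract (without relabelling) a subsequence such that each of the four measures converges weakly-* to some $(\rho_0', \rho_0'', \rho_1'', \rho_1')\in\measp(\Omega)^4$ (the non-negativity of the limit is preserved under weak-* convergence). Since $\Omega$ is compact, the constant $1$ lies in $C(\Omega)$, so the total masses also converge and in particular $\|\rho_0'\|_{\meas}=\|\rho_0''\|_{\meas}$ and $\|\rho_1''\|_{\meas}=\|\rho_1'\|_{\meas}$, so the two $W_1$ terms at the limit are finite. By Proposition \ref{prop:LSCLocDis} each of $\Dl,\Dm,\Dr$ is sequentially weakly-* lower semi-continuous, and the classical weak-* lower semi-continuity of $W_1$ on pairs of equal mass (which follows, e.\,g., from the Kantorovich--Rubinstein formulation \eqref{eqn:W1predual}) handles the transport terms. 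Summing five lower semi-continuous nonnegative functionals, the limit quadruple attains the infimum.

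The only mildly delicate step is the coercivity argument in the second paragraph: it crucially uses that $c_l(0,1)$ and $c_r(1,0)$ are strictly positive, which in turn follows from the strict positivity assumption \ref{enm:strictPos} at the boundary point $0$ together with the lower semi-continuity from \ref{enm:cConv}. Everything else is a routine direct-method argument: weak-* compactness from Banach--Alaoglu and termwise lower semi-continuity of the functional.
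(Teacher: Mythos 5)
Your proposal is correct and follows essentially the same route as the paper: a direct-method argument in which the coercivity of $\Dl$ and $\Dr$ (Remark~\ref{rem:coercivity}) yields uniform mass bounds on the minimizing sequence, weak-* compactness gives a convergent subsequence in $\measp(\Omega)^4$, and the sequential weak-* lower semi-continuity of $\Dl$, $\Dm$, $\Dr$ (Proposition~\ref{prop:LSCLocDis}) and of $W_1$ identifies the limit as a minimizer. You merely spell out in more detail the coercivity step (via $c(0,1)>0$ and lower semi-continuity of the integrand) and the mass identities forced by finiteness of the $W_1$ terms, both of which the paper leaves implicit.
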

\begin{proof}
Consider a minimising sequence $(\rho_{0,n}',\rho_{1,n}',\rho_{0,n}'',\rho_{1,n}'')$, $n=1,2,\ldots$.
By the coercivity of $\Dl$ and $\Dr$ we may assume $(\measnrm{\rho_{0,n}'},\measnrm{\rho_{1,n}'})$ and thus also $(\measnrm{\rho_{0,n}''},\measnrm{\rho_{1,n}''})$ to be uniformly bounded.
Therefore, a subsequence of $(\rho_{0,n}',\rho_{1,n}',\rho_{0,n}'',\rho_{1,n}'')$ converges weakly-* against some $(\rho_0',\rho_1',\rho_0'',\rho_1'')\in\measp(\Omega)^4$,
which must be the minimizer due to the sequential weak-* lower semi-continuity of $\Dl$, $\Dr$, $\Dm$, and $W_1$.
\end{proof}

\begin{proposition}[Sequential lower semi-continuity]
	\label{prop:seqLSCW}
	The discrepancy $W_{\Dl,\Dm,\Dr}$ is sequentially weakly-* lower semi-continuous on $\meas(\Omega)^2$.
\end{proposition}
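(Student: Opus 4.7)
The plan is to argue along a minimizing sequence of configurations using the already-established building blocks: coercivity of the local discrepancies (Remark~\ref{rem:coercivity}), sequential weak-* lower semi-continuity of each local discrepancy (Proposition~\ref{prop:LSCLocDis}), and the classical sequential weak-* lower semi-continuity of $W_1$ on $\measp(\Omega)^2$ together with the compactness of bounded subsets of $\measp(\Omega)$ under the weak-* topology (which holds since $\Omega$ is compact).

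Concretely, let $(\rho_0^n,\rho_1^n)\to(\rho_0,\rho_1)$ weakly-* in $\meas(\Omega)^2$. We may assume that $L:=\liminf_n W_{\Dl,\Dm,\Dr}(\rho_0^n,\rho_1^n)<\infty$ and, by passing to a subsequence, that the liminf is actually a limit. In particular $\rho_0^n,\rho_1^n\in\measp(\Omega)$ for $n$ large, because otherwise the discrepancy is $+\infty$. For each such $n$ invoke the existence proposition to obtain a minimizer $(\rho_{0,n}',\rho_{0,n}'',\rho_{1,n}'',\rho_{1,n}')\in\measp(\Omega)^4$ whose total energy equals $W_{\Dl,\Dm,\Dr}(\rho_0^n,\rho_1^n)$.

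The first substantive step is to uniformly bound the masses of all four intermediate measures. Since weak-* convergence of $\rho_0^n$ and $\rho_1^n$ implies boundedness of $\measnrm{\rho_0^n}$ and $\measnrm{\rho_1^n}$, the coercivity bound $\Dl(\rho_0^n,\rho_{0,n}')\geq c_0(\varepsilon,1)\,\measnrm{\rho_{0,n}'}$ for any fixed $\varepsilon\in(0,1)$ and all $n$ with $\measnrm{\rho_{0,n}'}\geq\measnrm{\rho_0^n}/\varepsilon$ (Remark~\ref{rem:coercivity}) yields a uniform bound on $\measnrm{\rho_{0,n}'}$; the analogous argument for $\Dr$ controls $\measnrm{\rho_{1,n}'}$. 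Mass preservation of $W_1$ (which is the only way for $W_1(\rho_{0,n}',\rho_{0,n}'')$ and $W_1(\rho_{1,n}'',\rho_{1,n}')$ to be finite in the minimizing configuration) then transfers these bounds to $\measnrm{\rho_{0,n}''}$ and $\measnrm{\rho_{1,n}''}$. By weak-* compactness of norm-bounded subsets of $\meas(\Omega)$, a further subsequence gives
\[
\rho_{0,n}'\stackrel{\ast}{\rightharpoonup}\rho_0',\quad \rho_{0,n}''\stackrel{\ast}{\rightharpoonup}\rho_0'',\quad \rho_{1,n}''\stackrel{\ast}{\rightharpoonup}\rho_1'',\quad \rho_{1,n}'\stackrel{\ast}{\rightharpoonup}\rho_1'
\]
with limits in $\measp(\Omega)$.

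Combining the sequential weak-* lower semi-continuity of $\Dl,\Dm,\Dr$ (Proposition~\ref{prop:LSCLocDis}) on the respective pairs and of $W_1$, together with the fact that $W_{\Dl,\Dm,\Dr}(\rho_0,\rho_1)$ is bounded above by the sum of the five terms evaluated at the limit configuration, the proof finishes via
\[
W_{\Dl,\Dm,\Dr}(\rho_0,\rho_1)\leq \Dl(\rho_0,\rho_0')+W_1(\rho_0',\rho_0'')+\Dm(\rho_0'',\rho_1'')+W_1(\rho_1'',\rho_1')+\Dr(\rho_1',\rho_1)\leq L.
\]
The only delicate point I anticipate is the mass control step: one must simultaneously leverage the coercivity of $\Dl$ and $\Dr$ (to bound the outermost intermediate masses from the outer data) and the mass-preserving character of $W_1$ (to propagate the bound across to $\rho_{0,n}''$ and $\rho_{1,n}''$), being careful that, when the minimizing configuration has a $W_1$ term with unequal masses at its endpoints, the total energy would already be $+\infty$ contradicting boundedness of $L$, so all intermediate masses are in fact equal on each branch.
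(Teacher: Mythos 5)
Your proposal is correct and follows essentially the same route as the paper: extract the minimizing intermediate measures for each $n$, bound their masses via the coercivity of $\Dl$, $\Dr$ (and the mass-preservation of $W_1$), pass to a weak-* convergent subsequence, and conclude term by term from the sequential weak-* lower semi-continuity of $\Dl$, $\Dm$, $\Dr$, and $W_1$. Your explicit handling of the mass-transfer across the $W_1$ terms merely spells out what the paper compresses into ``the coercivity of $\Dl$, $\Dr$, and $W_1$''.
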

\begin{proof}
Indeed, let $\rho_i^n\to_{n\to\infty}\rho_i$ weakly-*, $i=0,1$, and assume (potentially by restricting to a subsequence) $W_{\Dl,\Dm,\Dr}(\rho_0^n,\rho_1^n) < C$ for some $C>0$ and all $n$ (otherwise there is nothing to show).
Furthermore, let $(\rho_{0,n}',\rho_{1,n}',\rho_{0,n}'',\rho_{1,n}'')$ be the corresponding minimizers in \eqref{eq:SandwichProblem}.
The coercivity of $\Dl$, $\Dr$, and $W_1$ now implies the uniform boundedness of $(\rho_{0,n}',\rho_{1,n}',\rho_{0,n}'',\rho_{1,n}'')$ in $\measp(\Omega)^4$.
Therefore we have weak-* convergence against some $(\rho_0',\rho_1',\rho_0'',\rho_1'')$ up to taking a subsequence and thus
\begin{align*}
W_{\Dl,\Dm,\Dr}(\rho_0,\rho_1)
&\leq \Dl(\rho_0,\rho_0') + W_1(\rho_0',\rho_0'') + \Dm(\rho_0'',\rho_1'') + W_1(\rho_1'',\rho_1') + \Dr(\rho_1',\rho_1)\\
&\leq\liminf_{n\to\infty} \Dl(\rho_{0}^n,\rho_{0,n}') + W_{1}(\rho_{0,n}',\rho_{0,n}'') + \Dm(\rho_{0,n}'',\rho_{1,n}'') \\
& \qquad \qquad + W_{1}(\rho_{1,n}'',\rho_{1,n}') + \Dr(\rho_{1,n}',\rho_{1}^n)\\
&=\liminf_{n\to\infty}W_{\Dl,\Dm,\Dr}(\rho_0^n,\rho_1^n)
\end{align*}
due to the weak-* lower semi-continuity of $\Dl$, $\Dr$, $\Dm$, and $W_1$.
\end{proof}

\begin{remark}[Semimetric properties]
If $\Dl(\mu,\nu)=\Dr(\nu,\mu)$ for all $\mu,\nu\in\measp(\Omega)$ and $\Dm$ is symmetric, then also $W_{\Dl,\Dm,\Dr}$ will be symmetric.
Thus, $W_{\Dl,\Dm,\Dr}$ is a semimetric in that it satisfies all metric axioms except for possibly the triangle inequality.
\end{remark}

\subsection{Model equivalence}\label{sec:modelEquivalence}
Here we prove that the two previously introduced model families $W_{\h,\g,B}$ and $W_{\Dl,\Dm,\Dr}$ are actually equivalent.
As a byproduct we arrive at a more intuitive interpretation of the quantities from definition\,\eqref{def:GeneralizedPrimalProblem},
which so far was just derived as the most general extension of the predual $W_1$ formulation.

\begin{proposition}[Primal and predual formulation]
	\label{prop:PrimalDualSandwich}
	Let the local discrepancies $\Dl$, $\Dm$, and $\Dr$ be induced by the integrands $\Cl$, $\Cm$, and $\Cr$,
	and let $\rho_0,\rho_1\in\measp(\Omega)$ with finite mass.
	The following primal and predual formulations hold.
	\begin{align}
	W_{\Dl,\Dm,\Dr}(\rho_0,\rho_1)
	&=\inf_{\pi_1,\pi_2\in\measp(\Omega^2)}
	\int_\Omega \Cl(\rho_0,{\Proj_1}_\sharp\pi_1)
	+\int_{\Omega\times\Omega}d(x,y)\,\d\pi_1(x,y)\nonumber\\
	&\qquad+\int_\Omega \Cm({\Proj_2}_\sharp\pi_1,{\Proj_1}_\sharp\pi_2)
	+\int_{\Omega\times\Omega}d(x,y)\,\d\pi_2(x,y)
	+\int_\Omega \Cr({\Proj_2}_\sharp\pi_2,\rho_1)\,,\label{eqn:primal}\\
	W_{\Dl,\Dm,\Dr}(\rho_0,\rho_1)
	&=\sup_{\substack{\alpha,\beta\in\Lip(\Omega)\\(\alpha(x),\beta(x))\in B_{01}\cap(B_0\times B_1)\,\forall x\in\Omega}}
	\int_\Omega \h(\alpha)\,\d\rho_0+\int_\Omega \g(\beta)\,\d\rho_1\,,\label{eqn:predual}
	\end{align}
	where
	\begin{gather}
	\label{eqn:predualInducedH}
	\hB(\beta)=-[\Cm(1,\cdot)]^\ast(-\beta)\,,\qquad
	\h(\alpha)=-[\Cl(1,\cdot)]^\ast(-\alpha)\,,\qquad
	\g(\beta)=-[\Cr(\cdot,1)]^\ast(-\beta)\,,\\
	B_{01}=\{(\alpha,\beta)\in\R^2\,|\,\alpha\leq \hB(-\beta)\}\,, \nonumber \\
	B_0=\cl\{\alpha\in\R\,|\,\h(\alpha)>-\infty\}\,,\qquad
	B_1=\cl\{\beta\in\R\,|\,\g(\beta)>-\infty\}\,, \nonumber
	\end{gather}
	where $\cl$ denotes the closure.
	If it is finite, the infimum in the primal formulation is achieved.
\end{proposition}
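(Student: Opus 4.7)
To prove the primal formulation \eqref{eqn:primal}, I plan to substitute Kantorovich's primal characterization $W_1(\mu,\nu)=\inf_{\pi\in\Gamma(\mu,\nu)}\int d\,\d\pi$ directly into the definition \eqref{eq:SandwichProblem}: the intermediate measures $(\rho_0',\rho_0'')$ and $(\rho_1'',\rho_1')$ become the marginals ${\Proj_1}_\sharp\pi_i,{\Proj_2}_\sharp\pi_i$ of couplings $\pi_1,\pi_2\in\measp(\Omega\times\Omega)$, and the nested infima collapse to a single infimum over $(\pi_1,\pi_2)$. Existence of minimizers when the value is finite then follows from the direct method: Remark~\ref{rem:coercivity} applied to $\Dl$ bounds $|{\Proj_1}_\sharp\pi_1^n|(\Omega)$ along a minimizing sequence (unboundedness would force $\Dl(\rho_0,{\Proj_1}_\sharp\pi_1^n)\to\infty$, contradicting finiteness of the objective), and $\Dr$ bounds $|{\Proj_2}_\sharp\pi_2^n|(\Omega)$ analogously, hence the total masses of $\pi_1^n,\pi_2^n$. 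Banach--Alaoglu then yields a weak-* convergent subsequence whose limit is a minimizer, since the terms $\int d\,\d\pi$ and the marginalisations ${\Proj_i}_\sharp$ are weak-* continuous and $\Dl,\Dm,\Dr$ are weak-* lower semi-continuous by Proposition~\ref{prop:LSCLocDis}.

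For the predual \eqref{eqn:predual}, the plan is to apply Fenchel--Rockafellar duality to the primal. Introducing slack marginals $\sigma_1,\ldots,\sigma_4\in\measp(\Omega)$ enforcing $\sigma_1={\Proj_1}_\sharp\pi_1$, $\sigma_2={\Proj_2}_\sharp\pi_1$, $\sigma_3={\Proj_1}_\sharp\pi_2$, $\sigma_4={\Proj_2}_\sharp\pi_2$ via Lagrange multipliers $\phi_1,\ldots,\phi_4\in C(\Omega)$, the resulting Lagrangian decouples into five independent infima. The infima over $\pi_1,\pi_2\geq0$ enforce the Kantorovich constraints $\phi_1(x)+\phi_2(y)\leq d(x,y)$ and $\phi_3(x)+\phi_4(y)\leq d(x,y)$ pointwise; the infima over $\sigma_1,\sigma_4\geq 0$ collapse, via the pointwise Legendre transform enabled by the positive $1$-homogeneity of $\Cl,\Cr$, to $\int\h(\phi_1)\,\d\rho_0$ and $\int\g(\phi_4)\,\d\rho_1$ with $\h,\g$ exactly as in \eqref{eqn:predualInducedH}; and the infimum over $(\sigma_2,\sigma_3)\geq 0$ vanishes iff $-\phi_2(x)m_2-\phi_3(x)m_3\leq\Cm(m_2,m_3)$ pointwise on $\Omega$ for all $m_2,m_3\geq 0$, which by $1$-homogeneity of $\Cm$ reduces to the single pointwise condition $-\phi_2\leq\hB(\phi_3)$.

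It remains to optimize out the slack duals $\phi_2,\phi_3$. Since $\h,\g$ are monotone (condition~\ref{enm:negativeMeasures} of Definition~\ref{def:admissibility}), I plan to replace $\phi_1$ by its double $d$-transform $\phi_1^{dd}\geq\phi_1$, which is automatically $1$-Lipschitz thanks to the path-metric structure of $\Omega$; this does not decrease the objective and makes $\phi_2=-\phi_1$ the largest $\phi_2$ compatible with the first Kantorovich constraint when $\phi_1\in\Lip(\Omega)$. The symmetric manoeuvre on $(\phi_3,\phi_4)$ converts the middle constraint precisely into $\phi_1\leq\hB(-\phi_4)$, i.e.\ $(\phi_1,\phi_4)\in B_{01}$, and identifying $\alpha=\phi_1,\beta=\phi_4$ delivers \eqref{eqn:predual}. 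The pointwise constraints $\alpha\in B_0,\beta\in B_1$ come for free: if $\alpha(x)<\alphamin$ at some $x$ then $\h(\alpha(x))=-\infty$, so by continuity of $\alpha$ the objective collapses to $-\infty$ unless $\alpha\geq\alphamin$ everywhere, and the truncation $\alpha\mapsto\max\{\alpha,\alphamin\}$ preserves both the Lipschitz bound and, by monotonicity of $\hB$, the $B_{01}$ constraint.

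The main obstacle is the validity of the strong duality step. I intend to justify it by viewing $W_{\Dl,\Dm,\Dr}:\measp(\Omega)^2\to[0,\infty]$ as a jointly convex (inherited from the $\diamond$-structure, cf.\ Remark~\ref{rem:propertiesInfConv}), positively $1$-homogeneous, weak-* lower semi-continuous (Proposition~\ref{prop:seqLSCW}) functional, and then invoking Fenchel--Moreau biconjugation: $W_{\Dl,\Dm,\Dr}=W_{\Dl,\Dm,\Dr}^{**}$, where the biconjugate computed via the decoupled Lagrangian above is precisely the predual expression. An alternative, should qualification issues near the boundary of the effective domain of $\h,\g$ cause trouble, is to invoke Sion's minimax theorem on bounded (hence weak-* compact) subsets of $\measp(\Omega)$, using the coercivity of $\Dl,\Dr$ exactly as in the existence argument to restrict to such a compact domain.
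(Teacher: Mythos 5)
Your overall architecture is genuinely different from the paper's: you dualize the measure-side primal via a Lagrangian and then eliminate the intermediate potentials by $d$-transforms, whereas the paper starts from the function-side problem and applies Fenchel duality there (Lemmas~\ref{lem:FenchelMoreau}, \ref{lem:conjugateD}, \ref{lem:SupLip} compute the conjugates of $\iota_{B_{01}}$, $\iota_{\Lip(\Omega)}$ and of the local discrepancies, and strong duality is obtained from the explicit Slater point $\alpha\equiv\beta\equiv-\delta$ in the interior of $B_{01}\cap(\dom\h\times\dom\g)$). Your derivation of the primal formulation by substituting \eqref{eqn:W1primal}, your direct-method existence argument via Remark~\ref{rem:coercivity} and Proposition~\ref{prop:LSCLocDis}, and the decoupling/elimination of the slack potentials are all sound in outline (though the elimination needs the care of Proposition~\ref{prop:modelReduction}: $\hB(-\cdot)$ is only a contraction where needed because $B_{01}\subset\{\alpha+\beta\leq0\}$, so the claim that the double $d$-transform collapses to a negation must be restricted to the relevant region).

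The genuine gap is the strong-duality step, and neither of your two proposed mechanisms closes it as stated. First, the biconjugation route: invoking $W_{\Dl,\Dm,\Dr}=(\preconj{W_{\Dl,\Dm,\Dr}})^\ast$ via Lemma~\ref{lem:FenchelMoreau} requires weak-* lower semi-continuity with respect to nets, while Proposition~\ref{prop:seqLSCW} only provides \emph{sequential} weak-* lower semi-continuity (this is repairable via Krein--\v{S}mulian and separability of $C(\Omega)^2$, but must be argued). More seriously, the assertion that ``the biconjugate computed via the decoupled Lagrangian is precisely the predual expression'' conflates two objects: the Fenchel biconjugate is $\sup_{(a,b)\in S'}\int_\Omega a\,\d\rho_0+\int_\Omega b\,\d\rho_1$ with $S'=\{(a,b)\in C(\Omega)^2\mid\preconj{W_{\Dl,\Dm,\Dr}}(a,b)\leq0\}$, whereas the Lagrangian dual of the primal is the nonlinearly parametrized supremum \eqref{eqn:predual}; these coincide exactly when there is no duality gap, which is the statement to be proved. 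Identifying $S'$ with the family $\{(a,b)\mid a\leq\h(\alpha),\,b\leq\g(\beta)\text{ for feasible }(\alpha,\beta)\}$ is a nontrivial additional computation (a chain of partial preconjugations through the five terms, with closure issues at each $\sup$) that you have not carried out. Second, the Sion route fails in the direction you need: restricting the minimization variable to a weak-* compact ball $K_R$ yields $\inf_{K_R}\sup_\phi L=\sup_\phi\inf_{K_R}L$, but $\inf_{\pi\in K_R}L(\pi,\phi)$ is finite at dual-\emph{infeasible} $\phi$ (where the unrestricted infimum is $-\infty$), so the ball-restricted dual dominates the true dual and you only recover $\text{primal}\geq\text{dual}$ --- which is already weak duality. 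Passing $R\to\infty$ requires an exchange of supremum and limit that needs its own uniformity argument. The cleanest repair is to do what the paper does: apply Fenchel--Rockafellar to the function-side problem, where the constraint qualification can be verified explicitly by the constant functions $\alpha\equiv\beta\equiv-\delta$; this simultaneously removes the gap and yields attainment of the primal infimum without your separate compactness argument.
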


\begin{remark}[Relation to $W_{\h,\g,B}$]
	Problem \eqref{eqn:predual} looks very similar to a $W_{\h,\g,B}$-type problem as specified in  \eqref{eq:GeneralizedPrimalProblem}, where partial conjugates of $(\Cl,\Cr,\Cm)$ take the roles of $(\h,\g,\hB)$. We will make this correspondence more precise at the end of this section.
	Note that the set $B_{01}$ can also be characterized by $\Cm^\ast = \iota_{B_{01}}$.
	However, the characterization via the function $\hB$ emphasises the symmetry of the roles of $\h$, $\g$ and $B$, as counterparts of $\Cl$, $\Cr$ and $\Cm$.
\end{remark}

\begin{remark}[Relation to entropy-transport]\label{rem:entropyTransport}
With $\Dm$ the extended discrete metric \eqref{eqn:discreteMetric} (in which case one of $\pi_1$ and $\pi_2$ can be eliminated) and the metric $d$ replaced by a more general cost $c$,
the above primal problem \eqref{eqn:primal} has for instance also been considered by Liero et al.\,\cite[(1.6)]{LieroMielkeSavare-HellingerKantorovich-2015a}.
\end{remark}

The proof of Proposition \ref{prop:PrimalDualSandwich} requires a few preparatory lemmas.
The first one is the analogue of the well-known Fenchel--Moreau theorem, only stated for functionals on the dual space.

\begin{lemma}
	\label{lem:FenchelMoreau}
	Let $X$ be a Banach space with topological dual $X^*$ and $w:X^*\to(-\infty,\infty]$ be proper convex and weakly-* lower semi-continuous. Then $w=(\preconj w)^*$.
\end{lemma}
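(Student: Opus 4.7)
The statement is the standard Fenchel--Moreau biconjugate theorem, only with the roles of $X$ and $X^*$ swapped, so the natural strategy is to mimic the usual proof, but carry it out in the locally convex space $(X^*, \text{weak-*})$ whose topological dual is (canonically) $X$ itself. This identification of the dual is precisely what allows the separating functional produced by Hahn--Banach to be expressed as evaluation against some $x \in X$, which is what we need in order to recognise the biconjugate $(\preconj w)^*$.

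The easy inequality $(\preconj w)^*(x^*) \leq w(x^*)$ is immediate from rewriting the Young inequality: for every $x \in X$ and $x^* \in X^*$,
\[
\langle x, x^* \rangle - \preconj w(x) \leq w(x^*),
\]
by definition of $\preconj w$. Taking the supremum in $x$ gives the bound.

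For the reverse inequality, I would fix $x_0^* \in X^*$ and any real number $c < w(x_0^*)$ and aim to show $(\preconj w)^*(x_0^*) \geq c$. The epigraph $\mathrm{epi}(w) \subset X^* \times \R$ is convex (by convexity of $w$) and weak-* closed (by the weak-* lower semi-continuity of $w$), so endowing $X^* \times \R$ with the product of the weak-* and usual topology, the Hahn--Banach separation theorem for locally convex spaces yields a continuous linear functional $\ell$ on $X^* \times \R$ and a real number $\gamma$ with
\[
\ell(x_0^*, c) < \gamma < \ell(x^*, r) \qquad\text{for all } (x^*, r) \in \mathrm{epi}(w).
\]
Since the continuous dual of $(X^*, \text{weak-*})$ is $X$, the functional $\ell$ has the form $\ell(x^*, r) = -\langle x, x^* \rangle - s r$ for some $x \in X$ and $s \in \R$. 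Letting $r \to +\infty$ on the right-hand side forces $s \leq 0$.

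If $s < 0$, rescaling lets me assume $s = -1$; then setting $r = w(x^*)$ on the right gives $\langle x, x^* \rangle - w(x^*) \leq -\gamma$ for all $x^* \in \dom w$, so $\preconj w(x) \leq -\gamma$, hence
\[
(\preconj w)^*(x_0^*) \geq \langle x, x_0^* \rangle - \preconj w(x) \geq \langle x, x_0^* \rangle + \gamma > c,
\]
as desired. The case $s = 0$ occurs only when $w(x_0^*) = +\infty$, and is the one subtle point: the separation then gives $\sup_{x^* \in \dom w} \langle x, x^* \rangle \leq -\gamma < \langle x, x_0^* \rangle$. Using that $w$ is proper, pick $\hat x \in X$ with $\preconj w(\hat x) < \infty$ (such $\hat x$ exists because $w$ is proper convex weak-* lsc, so $\preconj w \not\equiv +\infty$, which one also sees directly by choosing $\hat x^* \in \dom w$ and noting that $\preconj w$ is bounded below by $\langle \cdot, \hat x^* \rangle - w(\hat x^*)$, then combining with the already-proven $s<0$ case applied to some other point to produce the required $\hat x$). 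Then for every $\lambda > 0$,
\[
\preconj w(\hat x + \lambda x) \leq \preconj w(\hat x) + \lambda \sup_{x^* \in \dom w} \langle x, x^* \rangle \leq \preconj w(\hat x) - \lambda \gamma,
\]
so
\[
(\preconj w)^*(x_0^*) \geq \langle \hat x + \lambda x, x_0^* \rangle - \preconj w(\hat x + \lambda x) \geq \langle \hat x, x_0^* \rangle - \preconj w(\hat x) + \lambda\bigl(\langle x, x_0^* \rangle + \gamma\bigr),
\]
and the bracketed quantity is strictly positive, so letting $\lambda \to \infty$ gives $(\preconj w)^*(x_0^*) = +\infty = w(x_0^*)$. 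Combining both cases and letting $c \uparrow w(x_0^*)$ concludes the proof. The only real obstacle is the $s=0$ case, which is handled by the standard coercivity trick above combined with the properness assumption.
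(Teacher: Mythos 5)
Your proof is correct and rests on the same key observation as the paper's: the topological dual of $X^*$ equipped with the weak-* topology is $X$ itself, so the classical Fenchel--Moreau argument applies verbatim in that locally convex space. The paper simply cites the Fenchel--Moreau theorem for the space $Y=X^*$ with the weak-* topology, whereas you re-derive it via Hahn--Banach separation (including the standard $s=0$ coercivity trick); both are fine, yours is just more self-contained.
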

\begin{proof}
Let $Y=X^*$ equipped with the weak-* topology, then its dual space, the space of continuous linear functionals on $Y$, is $Y^*=X$.
Now define $v:Y\to(-\infty,\infty]$ by $v(y)=w(y)$ for all $y\in Y$, then $v$ is proper convex lower semi-continuous on $Y$.
By the Fenchel--Moreau theorem, $v=\preconj(v^*)$, however, $v^*=\preconj w$ and $\preconj(v^*)=(\preconj w)^*$ by definition of the Legendre--Fenchel conjugate.
\end{proof}

Next, recall that any discrepancy $D$ acts on all of $\meas(\Omega)^2$ with $D(\rho_0,\rho_1)=\infty$ as soon as $\rho_0\notin\measp(\Omega)$ or $\rho_1\notin\measp(\Omega)$. 

\begin{lemma}
	\label{lem:conjugateD}
	Let $D(\rho_0,\rho_1)=\int_\Omega c(\rho_0,\rho_1)$ be a local discrepancy on $\measp(\Omega)^2$ and $\rho\in\measp(\Omega)$. We have
	\begin{equation*}
	\preconj[D(\rho,\cdot)](\alpha)=\int_\Omega h(\alpha)\,\d\rho+\iota_{\cl[h(\alpha)<\infty]}(\alpha)
	\qquad\text{and}\qquad
	\preconj D(\alpha,\beta)=\iota_{[\alpha\leq-h(\beta)]}(\alpha,\beta)
	\end{equation*}
	for all $\alpha,\beta\in C(\Omega)$ and $h(\alpha)=[c(1,\cdot)]^\ast(\alpha)$,
	where $\cl$ denotes the closure and $[h(\alpha)<\infty]\subset C(\Omega)$ and $[\beta\leq h(\alpha)]\subset C(\Omega)^2$ denote the sets of functions $\alpha$ and $\beta$ such that the respective conditions hold pointwise.
	Moreover we have
	\begin{equation}
		\label{eqn:CConjugateAlphaMin}
		\cl\{\alpha\in\R\,|\,h(\alpha)<\infty\}=(-\infty,c(0,1)]\,.
	\end{equation}
\end{lemma}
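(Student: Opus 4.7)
The plan is to compute both preconjugates directly from their definitions, decomposing measures via Radon--Nikodym and exploiting the positive 1-homogeneity of $c$.

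For the first identity I would restrict to $\rho_1\in\measp(\Omega)$ (since $D=+\infty$ otherwise) and write $\rho_1=r_1\rho+\rho_1^\perp$ with $r_1\geq 0$ measurable and $\rho_1^\perp\in\measp(\Omega)$ singular with respect to $\rho$. Choosing $\mu=\rho+\rho_1^\perp$ as reference measure, the 1-homogeneity of $c$ gives $D(\rho,\rho_1)=\int_\Omega c(1,r_1)\,\d\rho+c(0,1)\measnrm{\rho_1^\perp}$ while $\int\alpha\,\d\rho_1=\int\alpha r_1\,\d\rho+\int\alpha\,\d\rho_1^\perp$, so that the supremum over $\rho_1$ splits into two independent problems. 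By a standard measurable-selection interchange, the sup over measurable $r_1\geq 0$ equals $\int h(\alpha)\,\d\rho$ with $h=[c(1,\cdot)]^\ast$. The sup over singular $\rho_1^\perp\geq 0$ equals $0$ whenever $\alpha(x)\leq c(0,1)$ for all $x\in\Omega$ and $+\infty$ otherwise, since one can place $\rho_1^\perp$ at any $\rho$-non-atom inside the nonempty open set $\{\alpha>c(0,1)\}$ to drive the contribution to infinity. In view of \eqref{eqn:CConjugateAlphaMin}, which I would prove next by noting that $c(1,r)=r\,c(1/r,1)$ for $r>0$ so that $c(1,r)/r\to c(0,1)$ is the recession slope of the convex function $r\mapsto c(1,r)$ (whence $h(a)<\infty$ iff $a\leq c(0,1)$, possibly with strict inequality at the boundary, giving the claimed closure), this second contribution is precisely the asserted indicator $\iota_{\cl[h(\alpha)<\infty]}(\alpha)$.

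For $\preconj D(\alpha,\beta)$ I would proceed analogously: picking any common reference $\mu$ with $\rho_0,\rho_1\ll\mu$ and writing $r_i=\RadNik{\rho_i}{\mu}\geq 0$, the problem reduces after pointwise optimization to $\int c^\ast(\alpha(x),\beta(x))\,\d\mu$ followed by a sup over $\mu$. Using the 1-homogeneity of $c$ one factors $\sup_{r_0,r_1\geq 0}(\alpha r_0+\beta r_1-c(r_0,r_1))=\sup_{r_0\geq 0}r_0\,(\alpha+h(\beta))$ (substituting $r_1=r_0 t$), which equals $0$ if $\alpha+h(\beta)\leq 0$ and $+\infty$ otherwise; hence $c^\ast=\iota_{\{\alpha\leq -h(\beta)\}}$. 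Taking the sup over $\mu\in\measp(\Omega)$ and using continuity of $(\alpha,\beta)$ to upgrade the $\mu$-a.e.\ pointwise constraint to a uniform constraint on all of $\Omega$ (a violation on a nonempty open set can be detected by a suitable Dirac measure) yields $\preconj D(\alpha,\beta)=\iota_{[\alpha\leq -h(\beta)]}(\alpha,\beta)$. The main obstacle throughout is the careful bookkeeping of the absolutely continuous and singular contributions together with the conversion of a.e.\ pointwise constraints into everywhere constraints via continuity of $\alpha,\beta$; the measurable-selection interchange and the identification of $c(0,1)$ as the recession slope of $r\mapsto c(1,r)$ are standard convex-analytic ingredients.
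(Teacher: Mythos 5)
Your proposal is correct and, for the first identity and the closure claim \eqref{eqn:CConjugateAlphaMin}, follows essentially the paper's own route: Lebesgue decomposition of the test measure into a part absolutely continuous with respect to $\rho$ and a singular remainder, a measurable-selection interchange of supremum and integral for the absolutely continuous part (the paper cites a normal-integrand selection theorem for the lower bound), a test-measure argument to force the everywhere-constraint $\alpha\leq c(0,1)$, and the identification of $c(0,1)$ as the recession slope of $r\mapsto c(1,r)$. Where you genuinely diverge is the second identity: you compute the pointwise joint conjugate $c^\ast(\alpha,\beta)=\iota_{\{\alpha\leq-h(\beta)\}}$ directly (via the substitution $r_1=r_0t$ and the $1$-homogeneity) and then integrate and take the supremum over reference measures, whereas the paper obtains $\preconj D$ by a second \emph{partial} conjugation, $\preconj D(\beta,\alpha)=\sup_{\rho\in\measp(\Omega)}\int_\Omega\beta\,\d\rho+\preconj[D(\rho,\cdot)](\alpha)$, thereby reusing the first identity and avoiding a second round of singular/absolutely-continuous bookkeeping. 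Your route is more symmetric and makes the relation $c^\ast=\iota_{B_{01}}$ (which the paper only remarks on later) explicit; the paper's route is shorter because all the measure-theoretic work is done once.

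One small point deserves a word in your singular-part argument: you drive the supremum to $+\infty$ by placing $\rho_1^\perp$ at a ``$\rho$-non-atom inside the open set $\{\alpha>c(0,1)\}$'', which presupposes such a non-atom exists. If that open set happened to consist entirely of atoms of $\rho$ (only conceivable in degenerate situations, since a nonempty open subset of a nontrivial compact path metric space is uncountable while $\rho$ has countably many atoms), the divergence instead comes from the absolutely continuous part, because $h(\alpha(\hat x))=+\infty$ at such an atom. The paper sidesteps this case distinction by testing with the single measure $\mu=\delta_{\hat x}+\rho$ and density $g(\hat x)=n$, which diverges regardless of whether $\hat x$ is an atom. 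This is a fixable omission, not a flaw in the approach.
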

\begin{proof}
We have
\begin{multline*}
\preconj[D(\rho,\cdot)](\alpha)
=\sup_{\hat\rho\in\meas(\Omega)}\int_\Omega\alpha\,\d\hat\rho-\Dl(\rho,\hat\rho)\\
=\sup_{\substack{\hat\rho,\mu\in\measp(\Omega)\,:\\\rho,\hat\rho\ll\mu}}\int_\Omega\alpha\RadNik{\hat\rho}{\mu}-c\left(\RadNik{\rho}{\mu},\RadNik{\hat\rho}{\mu}\right)\,\d\mu
=\sup_{\substack{\mu\in\measp(\Omega)\,:\,\rho\ll\mu\\g\geq0\text{ measurable}}}\int_\Omega\alpha g-c\left(\RadNik{\rho}\mu,g\right)\,\d\mu\,.
\end{multline*}
Now take as a test case $\mu=\delta_{\hat x}+\rho$ for $\hat x\in\Omega$ and $g(\hat x)=n\in\N$, $g(x)=1$ else.
We obtain
\begin{align*}
\preconj[D(\rho,\cdot)](\alpha)
	& \geq\int_\Omega\alpha g-c\left(\RadNik{\rho}\mu,g\right)\,\d\mu \\
	& = (1+\rho(\{\hat x\})) \cdot \left[ n \cdot \alpha(\hat x) - n \cdot c \left(\RadNik{\rho}\mu(\hat x)/n,1\right) \right] + \int_{\Omega \setminus \{\hat x\}} \left( \alpha -c(1,1) \right)\,\d \rho
\end{align*}
which, due to $c(z/n,1)\to c(0,1)$ for any $z\geq0$, diverges to infinity as $n\to\infty$ if $\alpha(\hat x)>c(0,1)$. Thus
\begin{equation*}
\preconj[D(\rho,\cdot)](\alpha)=\infty\qquad\text{if }\alpha(x)>c(0,1)\text{ for any }x\in\Omega\,.
\end{equation*}
Furthermore, for any $\mu\in\measp(\Omega)$ let us denote the Lebesgue decomposition by $\mu=g_\mu\rho+\mu^\perp$, where $g_\mu$ is a density and $\mu^\perp$ is the singular part with respect to $\rho$.
If $\alpha(x)\leq c(0,1)$ for all $x\in\Omega$, we have
\begin{align*}
\preconj[D(\rho,\cdot)](\alpha)
&=\sup_{\substack{\mu\in\measp(\Omega),\,\rho\ll\mu\\g\geq0\text{ measurable}}}\int_\Omega\alpha g-c(0,g)\,\d\mu^\perp+\int_\Omega\left(\alpha g-c(\tfrac1{g_\mu},g)\right)g_\mu\,\d\rho\\
&=\sup_{\tilde g\text{ measurable}}\int_\Omega\alpha\tilde g-c(1,\tilde g)\,\d\rho\,.
\end{align*}
Since $c$ is a normal integrand and $\alpha\in L^\infty(\Omega,\rho)$ we have (see e.\,g.\ \cite[Thm.\,VII-7]{CaVa77})
\begin{equation*}
\preconj[D(\rho,\cdot)](\alpha)
\geq\sup_{\tilde g\in L^1(\Omega,\rho)}\int_\Omega\alpha\tilde g-c(1,\tilde g)\,\d\rho
=\int_\Omega[c(1,\cdot)]^\ast(\alpha)\,\d\rho\,,
\end{equation*}
while on the other hand also
\begin{equation*}
\preconj[D(\rho,\cdot)](\alpha)
\leq\int_\Omega\sup_{\tilde g}\left(\alpha\tilde g-c(1,\tilde g)\right)\,\d\rho
=\int_\Omega[c(1,\cdot)]^\ast(\alpha)\,\d\rho\,.
\end{equation*}
Finally, it is straightforward to check $$\cl\{\alpha\in\R\,|\,[c(1,\cdot)]^\ast(\alpha)<\infty\}=(-\infty,c(0,1)]$$
so that summarising, we arrive at $\preconj[D(\rho,\cdot)](\alpha)=\int_\Omega h(\alpha)\,\d\rho+\iota_{\cl[h(\alpha)<\infty]}(\alpha)$.

As for $\preconj D$ we can finally compute
\begin{equation*}
\preconj D(\beta,\alpha)
=\sup_{\rho\in\measp(\Omega)}\int_\Omega\beta\,\d\rho+\preconj[D(\rho,\cdot)](\alpha)
=\sup_{\rho\in\measp(\Omega)}\int_\Omega\beta+h(\alpha)\,\d\rho+\iota_{\cl[h(\alpha)<\infty]}(\alpha)\,,
\end{equation*}
where it is straightforward to identify the right-hand side with $\iota_{[\beta\leq-h(\alpha)]}(\beta,\alpha)$.
\end{proof}

\begin{lemma}[Support function of $\Lip(\Omega)$]
	\label{lem:SupLip}
	For $\rho\in\meas(\Omega)$ we have
	$$\iota_{\Lip(\Omega)}^\ast(\rho)=\inf_{\mu\in\meas(\Omega)}W_1(\mu,\rho+\mu)\,,$$
	where the infimum is achieved if it is finite.
\end{lemma}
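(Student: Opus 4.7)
The plan is to unfold both sides and observe that the minimization on the right essentially trivialises once we check feasibility. On the left, $\iota_{\Lip(\Omega)}^\ast(\rho)=\sup_{\alpha\in\Lip(\Omega)}\int_\Omega\alpha\,\d\rho$ by definition of the preconjugate. On the right, $W_1(\mu,\rho+\mu)$ is $+\infty$ unless both $\mu$ and $\rho+\mu$ lie in $\measp(\Omega)$, in which case the equal-mass condition $\mu(\Omega)=(\rho+\mu)(\Omega)$ forces $\rho(\Omega)=0$.

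I would first dispose of the case $\rho(\Omega)\neq 0$: no $\mu$ is feasible, so the infimum is $+\infty$, while on the left substituting constants $\alpha\equiv c\in\R$ gives $c\,\rho(\Omega)$, whose supremum over $c$ is also $+\infty$. Both sides agree, and we may henceforth assume $\rho(\Omega)=0$. Under this assumption, for any feasible $\mu\in\measp(\Omega)$ with $\rho+\mu\in\measp(\Omega)$, Kantorovich--Rubinstein duality \eqref{eqn:W1predual} (with the standard elimination $\beta=-\alpha$, which yields the full supremum because $\rho+\mu\geq 0$) gives
\begin{equation*}
W_1(\mu,\rho+\mu)=\sup_{\alpha\in\Lip(\Omega)}\int_\Omega\alpha\,\d(\mu-(\rho+\mu))=\sup_{\alpha\in\Lip(\Omega)}\int_\Omega\alpha\,\d\rho=\iota_{\Lip(\Omega)}^\ast(\rho),
\end{equation*}
where in the second equality I used that $\Lip(\Omega)$ is invariant under $\alpha\mapsto -\alpha$. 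The striking point is that this value is independent of $\mu$, so the infimum over $\mu$ equals this common value and is automatically attained by every feasible $\mu$.

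It thus remains only to produce one feasible $\mu$, which is achieved via the Jordan decomposition: writing $\rho=\rho^+-\rho^-$ and setting $\mu=\rho^-\in\measp(\Omega)$ yields $\rho+\mu=\rho^+\in\measp(\Omega)$, with matching total masses courtesy of $\rho(\Omega)=0$. There is no real obstacle in this argument: the only nontrivial ingredient is the Kantorovich--Rubinstein formula itself, which is already recorded as \eqref{eqn:W1predual}, and the remaining steps amount to bookkeeping of sign conventions and feasibility.
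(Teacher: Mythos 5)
Your proof is correct and follows essentially the same route as the paper: apply the Kantorovich--Rubinstein dual formula with $\beta=-\alpha$ to see that $W_1(\mu,\rho+\mu)=\sup_{\alpha\in\Lip(\Omega)}\int_\Omega\alpha\,\d\rho$ for every feasible $\mu$, so the infimum is attained by any such $\mu$. Your treatment is in fact slightly more complete, since you explicitly handle the case $\rho(\Omega)\neq0$ and exhibit a feasible $\mu=\rho^-$ via the Jordan decomposition, a point the paper leaves implicit.
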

\begin{proof}

Let $\mu\in\meas(\Omega)$ be any measure with $\mu,\rho+\mu\in\measp(\Omega)$.
By Remark \ref{rem:W1reduction} we have
\begin{equation*}
W_1(\mu,\rho+\mu)
=\sup_{\beta\in\Lip(\Omega)}\int_\Omega-\beta\,\d\mu+\int_\Omega\beta\,\d(\rho+\mu)
=\sup_{\beta\in\Lip(\Omega)}\int_\Omega\beta\,\d\rho
=\iota_{\Lip(\Omega)}^\ast(\rho)\,.
\end{equation*}
Taking the infimum over $\mu\in\meas(\Omega)$ yields the result (recall that $W_1(\mu,\rho+\mu)=\infty$ for $\mu\notin\measp(\Omega)$ or $\rho+\mu\notin\measp(\Omega)$).
Obviously, the infimum is achieved by any $\mu$ with $\mu,\rho+\mu\in\measp(\Omega)$.
\end{proof}

\begin{proof}[Proof of Proposition~\ref{prop:PrimalDualSandwich}]
To obtain the primal formulation of $W_{\Dl,\Dm,\Dr}$ it is sufficient to replace any occurrence of $W_1$ by its definition \eqref{eqn:W1primal}, so let us now consider the predual formulation.
Let us abbreviate
\begin{equation*}
K=\{(\alpha,\beta)\in C(\Omega)^2\,|\,(\alpha(x),\beta(x))\in B_{01}\,\forall x\in\Omega\}\,.
\end{equation*}
By Lemmas~\ref{lem:FenchelMoreau} and \ref{lem:conjugateD} we have
\begin{equation*}
\Dm=(\preconj{\Dm})^\ast=\iota_K^\ast
\end{equation*}
(note that Lemma~\ref{lem:FenchelMoreau} can be applied to $\Dm$ due to Proposition~\ref{prop:LSCLocDis}).
Next consider the indicator function $H:C(\Omega)^2\to\{0,\infty\}$,
\begin{equation*}
H(\alpha,\beta)=\iota_{K}(\alpha,\beta)+\iota_{\Lip(\Omega)}(\alpha)+\iota_{\Lip(\Omega)}(\beta)\,.
\end{equation*}
Denoting by $\square$ the infimal convolution and by $\cl$ the closure of functions (also known as lower semi-continuous envelope), for $\mu,\nu\in\meas(\Omega)$ we have
\begin{align*}
H^\ast(\mu,\nu)
&=\cl\left[\iota_K^\ast\square(\iota_{\Lip(\Omega)},\iota_{\Lip(\Omega)})^\ast\right](\mu,\nu)\\
&=\cl\left[\inf_{\hat\eta,\hat\theta\in\meas(\Omega)}\Dm(\mu-\hat\eta,\nu-\hat\theta)+\iota_{\Lip(\Omega)}^\ast(\hat\eta)+\iota_{\Lip(\Omega)}^\ast(\hat\theta)\right]\\
&=\cl\bigg[\inf_{\hat\eta,\hat\theta\in\meas(\Omega)}\min_{\eta,\theta\in\meas(\Omega)}\Dm(\mu-\hat\eta,\nu-\hat\theta)+W_1(\hat\eta+\eta,\eta)+W_1(\hat\theta+\theta,\theta)\bigg]
\end{align*}
by Lemma~\ref{lem:SupLip}.
Substituting $\zeta=\hat\eta+\eta$ and $\xi=\hat\theta+\theta$ we arrive at
\begin{align*}
H^\ast(\mu,\nu)
&=\cl\bigg[\inf_{\zeta,\xi\in\meas(\Omega)}\min_{\eta,\theta\in\meas(\Omega)}\Dm(\mu-\zeta+\eta,\nu-\xi+\theta)+W_1(\zeta,\eta)+W_1(\xi,\theta)\bigg]\\
&=\min_{\zeta,\xi,\eta,\theta\in\meas(\Omega)}\Dm(\mu-\zeta+\eta,\nu-\xi+\theta)+W_1(\zeta,\eta)+W_1(\xi,\theta)\,,
\end{align*}
where the closedness of the right-hand side follows as in Proposition~\ref{prop:seqLSCW}
and the existence of minimizers follows via the direct method from the sequential weak-* lower semi-continuity of $W_1$ and $\Dm$
as well as the fact that the minimization may be restricted to $\|\zeta\|_\meas=\|\eta\|_\meas\leq\|\mu\|_\meas$ and $\|\xi\|_\meas=\|\theta\|_\meas\leq\|\nu\|_\meas$.
Indeed, we may assume $\eta$ and $\zeta$ to be nonnegative (else the functional would be infinite) and to be singular, $\eta\perp\zeta$,
since otherwise we can subtract their common part from both $\eta$ and $\zeta$ without changing the functional value.
Then, however, we require $\zeta\leq\mu$, for otherwise we would have $\mu-\zeta+\eta\notin\measp(\Omega)$ and the functional would be infinite.
The analogous holds for $\theta$ and $\xi$.

Now abbreviate for $i=0,1$
\begin{equation*}
F_i:C(\Omega)\to(-\infty,\infty],\quad
F_i(\alpha)=-\int_\Omega h_i(\alpha)\,\d\rho_i+\iota_{\cl[h_i(\alpha)>-\infty]}\,.
\end{equation*}
By Lemma~\ref{lem:conjugateD} we have
\begin{equation*}
F_0^\ast(\rho)=\Dl(\rho_0,-\rho)\,,
\qquad
F_1^\ast(\rho)=\Dr(-\rho,\rho_1)\,.
\end{equation*}
Furthermore, by the conditions in Definition~\ref{def:LocalSimilarityMeasure} there exist $\alpha,\beta\in\Lip(\Omega)$ with $(\alpha(x),\beta(x))\in\mathrm{int}(B_{01}\cap(\dom \h\times\dom \g))$ for all $x\in\Omega$
(for instance take $\alpha\equiv\beta\equiv-\delta$ for $\delta>0$ small enough).
Thus we have strong Fenchel duality \cite[p.\,201, Thm.\,1]{Luenberger69}, that is,
\begin{align*}
&\sup_{\alpha,\beta\in C(\Omega)}-\left[(F_0,F_1)(\alpha,\beta)+H(\alpha,\beta)\right]\\
=&\min_{\rho_0',\rho_1'\in\meas(\Omega)}\left[(F_0^\ast,F_1^\ast)(-\rho_0',-\rho_1')+H^\ast(\rho_0',\rho_1')\right]\\
=&\min_{\rho_0',\rho_1',\eta,\theta,\zeta,\xi\in\meas(\Omega)}\Dl(\rho_0,\rho_0')+\Dr(\rho_1',\rho_1)
+\Dm(\rho_0'-\zeta+\eta,\rho_1'-\xi+\theta)+W_1(\zeta,\eta)+W_1(\xi,\theta)\\
=&\min_{\rho_0',\rho_1',\rho_0'',\rho_1'',\eta,\theta\in\measp(\Omega)}
\Dl(\rho_0,\rho_0')+W_1(\rho_0'+\eta-\rho_0'',\eta)+\Dm(\rho_0'',\rho_1'')\\
& \qquad \qquad +W_1(\theta,\rho_1'+\theta-\rho_1'')+\Dr(\rho_1',\rho_1)\\
=&W_{\Dl,\Dm,\Dr}(\rho_0,\rho_1)\,,
\end{align*}
where in the last step we used $W_1(\rho_0'+\eta-\rho_0'',\eta)=W_1(\rho_0',\rho_0'')$ and $W_1(\theta,\rho_1'+\theta-\rho_1'')=W_1(\rho_1'',\rho_1')$
due to the fact that $W_1(\mu,\nu)=W_1(\mu+\rho,\nu+\rho)$ for any $\mu,\nu\in\measp(\Omega)$ and $\rho\in\meas(\Omega)$ such that $\mu+\rho,\nu+\rho\in\measp(\Omega)$.
Note that in the above calculation we have assumed the value of the optimization problem to be finite (else the $\min$ would have to be replaced by $\inf$)
so that Fenchel duality automatically yields the existence of optimal $\rho_0',\rho_0'',\rho_1'',\rho_1'$.
\end{proof}

The predual formulation \eqref{eqn:predual} of the $W_{\Dl,\Dm,\Dr}$-discrepancy \eqref{eq:SandwichProblem} already looks very similar to a $W_{\h,\g,B}$-type formulation, \eqref{eq:GeneralizedPrimalProblem}.
For the equivalence it remains to establish that the functions $\gB$, $\h$ and $\g$ as defined in \eqref{eqn:predualInducedH} are admissible in the sense of Definition \ref{def:admissibility}
and that conversely, admissible choices $\gB$, $\h$ and $\g$ can indeed be induced via \eqref{eqn:predualInducedH} from local discrepancy integrands $\Cl$, $\Cm$ and $\Cr$ in the sense of Definition \ref{def:LocalSimilarityMeasure}.

\begin{lemma}[Conversion of $c$ and $h$]
	\label{lem:chConversion}
	Let $c$ be a local discrepancy integrand in the sense of Definition \ref{def:LocalSimilarityMeasure}. Then $$h : \alpha \mapsto -[c(1,\cdot)]^\ast(-\alpha)$$ is an admissible function in the sense of Definition \ref{def:admissibility}, and $\alphamin = -c(0,1)$ (see \eqref{eqn:AlphaBetaMinA}-\eqref{eqn:AlphaBetaMinB}).
	
	Conversely, if $h$ is admissible, then
	\begin{equation}
		\label{eqn:cInduced}
		c : (m_0,m_1) \mapsto \begin{cases}
			m_0\,(-h)^\ast(-\frac{m_1}{m_0}) & \tn{ if } m_0 > 0,\, m_1 \geq 0, \\
			m_1\,\lim_{z \to \infty} \big((-h)^\ast(-z)/z \big) & \tn{ if } m_0 = 0,\,m_1 > 0, \\
			0 & \tn{ if } m_0 = 0,\, m_1 = 0, \\
			+ \infty & \tn{ else,}
			\end{cases}
	\end{equation}
	is a local discrepancy integrand and $h = -[c(1,\cdot)]^\ast(-\cdot)$.
\end{lemma}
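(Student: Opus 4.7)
The plan is to treat both directions by identifying the key two-variable object as a perspective or support function of a one-variable convex function and then invoking standard Legendre--Fenchel duality.

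\emph{Direction $c \mapsto h$.} Set $\phi(m_1) = c(1,m_1)$. By Definition~\ref{def:LocalSimilarityMeasure} the function $\phi$ is convex, lower semi-continuous, nonnegative on $[0,\infty)$, equal to $+\infty$ on $(-\infty,0)$, and vanishes only at $m_1 = 1$. The formula for $h$ can then be rewritten as the infimum of a family of non-decreasing affine functions,
\[
h(\alpha) = -\phi^\ast(-\alpha) = \inf_{m_1 \geq 0}\bigl(\alpha m_1 + \phi(m_1)\bigr),
\]
from which conditions \ref{enm:convexity}, \ref{enm:wellposedness} and \ref{enm:negativeMeasures} of admissibility are immediate, and condition \ref{enm:positivity1} follows by choosing $m_1 = 1$ (for the bound $h \leq \alpha$) and observing $\inf\phi = 0$ (for $h(0) = 0$). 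Condition \ref{enm:positivity} is the conjugate statement $\partial(-h)(0) = -\partial\phi^\ast(0) = \{-1\}$, using that $s \in \partial\phi^\ast(0)$ iff $s$ minimizes $\phi$ and that $m_1 = 1$ is the unique minimizer. For $\alphamin = -c(0,1)$ I compute the recession slope $\phi^\infty(1) = \lim_{m_1\to\infty} c(1,m_1)/m_1 = \lim_{m_1\to\infty} c(1/m_1,1) = c(0,1)$ by 1-homogeneity and lower semi-continuity of $c$, and then invoke the standard identification $\overline{\dom\phi^\ast} = (-\infty,\phi^\infty(1)]$.

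\emph{Direction $h \mapsto c$.} Writing $\psi(z) = (-h)^\ast(-z)$, the formula for $c$ on $[0,\infty)^2$ is the perspective function of $\psi$ (extended by the recession slope $\psi^\infty(1)$ on the boundary $m_0 = 0$). The efficient way to prove convexity, lower semi-continuity, 1-homogeneity, and the metric-type properties simultaneously is to verify the support-function representation
\[
c(m_0,m_1) = \sup_{\beta \in \R}\bigl(h(\beta)\, m_0 - \beta\, m_1\bigr)\qquad\text{for } m_0,m_1 \geq 0,
\]
which follows directly from the definition of $\psi$ when $m_0 > 0$ and, in the case $m_0 = 0$, $m_1 > 0$, reduces to $\sup_{\beta \geq \alphamin}(-\beta m_1) = -\alphamin\, m_1$, matching $\psi^\infty(1)\, m_1$ via the short calculation $\psi^\infty(1) = -\alphamin$. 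As a support function, $c$ is automatically convex, lower semi-continuous, and positively 1-homogeneous, establishing items (a) and (c) of Definition~\ref{def:LocalSimilarityMeasure}. For (b), $c(m,m) = m\psi(1) = 0$ because $\psi(1) = \sup_\alpha(h(\alpha) - \alpha) = 0$ (attained at $\alpha = 0$). Strict positivity off the diagonal splits into two cases: for $m_0 > 0$, $c(m_0,m_1) = 0$ forces $m_1/m_0$ to be a zero of $\psi$, which by uniqueness dual to $h'(0) = 1$ can only be $1$; for $m_0 = 0$, $m_1 > 0$, one needs $\psi^\infty(1) = -\alphamin > 0$, which is precisely where admissibility acts nontrivially, since differentiability of $h$ at $0$ forces $h$ to be finite on an open interval around $0$ and hence $\alphamin < 0$ strictly.

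The closing identity $h = -[c(1,\cdot)]^\ast(-\cdot)$ is then a direct Fenchel computation: since $c(1,\cdot) = \psi$ on $[0,\infty)$ and $+\infty$ elsewhere,
\[
-[c(1,\cdot)]^\ast(-\alpha) = -\psi^\ast(-\alpha) = h(\alpha),
\]
using $\psi^\ast(v) = -h(-v)$, which itself follows from Fenchel--Moreau applied to the proper convex lower semi-continuous function $-h$. The main technical subtlety of the whole lemma lies in the boundary case $m_0 = 0$ of the second direction, where the two admissibility features (differentiability at $0$ forcing $\alphamin < 0$, and 1-homogeneity/lsc of $c$ giving $\psi^\infty(1) = -\alphamin$) must be combined to rule out a degenerate $c$; everything else is bookkeeping of one-variable convex duality.
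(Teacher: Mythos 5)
Your proof is correct, and the first direction ($c\mapsto h$) follows essentially the paper's own route: concavity and upper semi-continuity from conjugacy, condition~\ref{enm:positivity1} from $c\geq0$ and $c(1,1)=0$, condition~\ref{enm:positivity} from the conjugate-subgradient identification $\partial\phi^\ast(0)=\argmin\phi=\{1\}$, and monotonicity from the domain restriction; your recession-slope computation of $\alphamin=-c(0,1)$ replaces the paper's citation of Lemma~\ref{lem:conjugateD} but amounts to the same fact. Where you genuinely diverge is the converse direction: the paper verifies positive $1$-homogeneity, convexity (via an explicit subadditivity estimate for the perspective function), and lower semi-continuity (via continuity of $(m_0,m_1)\mapsto(m_0,m_1/m_0)$ on $(0,\infty)\times[0,\infty)$ plus a separate boundary/limit argument) one property at a time, whereas you establish the single support-function representation $c(m_0,m_1)=\sup_{\beta\in\mathrm{dom}\,h}\bigl(h(\beta)m_0-\beta m_1\bigr)$ and read off all three properties at once. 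This is cleaner and ties the lemma directly to the duality $c^\ast=\iota_B$ that the paper uses elsewhere (e.g.\ in Propositions~\ref{prop:LSCLocDis} and \ref{prop:PrimalDualSandwich}); the price is a small amount of care at $m_0=0$, where you must adopt the convention that $\beta$ ranges only over $\mathrm{dom}\,h$ and check that the resulting value $-\alphamin m_1$ coincides with the recession slope $\lim_{z\to\infty}(-h)^\ast(-z)/z$ in \eqref{eqn:cInduced} --- which you do. You also carry out in full the two steps the paper dismisses as exercises: strict positivity of $c(0,m_1)$ for $m_1>0$ (correctly traced to differentiability of $h$ at $0$ forcing $\alphamin<0$) and the closing identity $h=-[c(1,\cdot)]^\ast(-\cdot)$ via Fenchel--Moreau applied to $-h$, noting that $(-h)^\ast(-z)=+\infty$ for $z<0$ so that $c(1,\cdot)$ and $\psi$ agree on all of $\R$. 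No gaps.
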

\begin{proof}
	For a given $c$, the induced $h$ is concave and upper semi-continuous due to the properties of the Fenchel-Legendre conjugate (conditions \ref{enm:convexity} and \ref{enm:wellposedness}).
	Furthermore, condition~\ref{enm:positivity1} is a consequence of $c\geq0$ and $c(1,1)=0$ for all local discrepancy integrands $c$,
while condition~\ref{enm:positivity} then follows from the strict positivity property~\ref{enm:strictPos} of Definition~\ref{def:LocalSimilarityMeasure} due to the conjugate subgradient theorem (see e.\,g.\ \cite[Prop.\,16.13]{BauschkeCombettes2011}).
Finally, condition~\ref{enm:negativeMeasures} from Definition~\ref{def:admissibility} is implied by property~\ref{enm:domain} of Definition~\ref{def:LocalSimilarityMeasure}. The value of $\alphamin = -c(0,1)$ is given by Lemma \ref{lem:conjugateD}.

Now let an admissible $h$ be given and consider the induced $c$. We need to show that it satisfies the properties in Definition~\ref{def:LocalSimilarityMeasure}.
It is a straightforward exercise to show that property~\ref{enm:strictPos} is implied by conditions~\ref{enm:positivity1} and \ref{enm:positivity} on $h$.
The positive one-homogeneity of $c$ follows by definition.
Convexity is implied by this one-homogeneity together with the subadditivity
\begin{multline*}
c(m_0+n_0,m_1+n_1)
=(m_0+n_0)(-h)^\ast\left(-\tfrac{m_1+n_1}{m_0+n_0}\right)\\
\leq(m_0+n_0)\left[\tfrac{m_0}{m_0+n_0}(-h)^\ast\left(-\tfrac{m_1}{m_0}\right)+\tfrac{n_0}{m_0+n_0}(-h)^\ast\left(-\tfrac{n_1}{n_0}\right)\right]
=c(m_0,n_0)+c(m_1,n_1)\,,
\end{multline*}
where we have used convexity of $(-h)^\ast(-\cdot)$.
The lower semi-continuity of $c$ on $(0,+\infty) \times [0,+\infty)$ is a direct consequence of the lower semi-continuity of $(-h)^\ast(-\cdot)$ and the continuity of $(m_0,m_1)\mapsto(m_0,\frac{m_1}{m_0})$. The function for $m_0=0$ and $m_1>0$ is just defined as the limit $m_0 \to 0$ for $m_1 > 0$, and lower semi-continuity in $(m_0,m_1) = (0,0)$ follows since $0$ is the global minimum of $c$ (this establishes property \ref{enm:cConv}).
Property~\ref{enm:domain} is satisfied by definition of $c$.
Finally, it is a simple exercise to show that the $h$ induced by $c$ is in fact the original $h$.
\end{proof}

We can finally state the equivalence relation between \eqref{eq:GeneralizedPrimalProblem} and \eqref{eq:SandwichProblem}.

\begin{corollary}[Equivalence of formulations]
	\label{cor:equivalenceStatic}
	Let the local discrepancies $\Dl$, $\Dm$, and $\Dr$ be induced by the integrands $\Cl$, $\Cm$, and $\Cr$.
	Then $W_{\Dl,\Dm,\Dr}=W_{\h,\g,B}$ for
	\begin{gather*}
	\h(\alpha)=-[\Cl(1,\cdot)]^\ast(-\alpha)\,,\qquad
	\g(\beta)=-[\Cr(\cdot,1)]^\ast(-\beta)\,,\\
	\gB(\alpha)=-[\Cm(\cdot,1)]^\ast(-\alpha)\,,\qquad
	\hB(\beta)=-[\Cm(1,\cdot)]^\ast(-\beta)\,,\\
	B_{01}=\{(\alpha,\beta)\in\R^2\,|\,\alpha\leq\hB(-\beta)\} = \{(\alpha,\beta)\in\R^2\,|\,\beta\leq\gB(-\alpha)\} \,,\\
	B_0 = [\alphamin,+\infty)\,, \qquad \alphamin = -\Cl(0,1)\,, \\
	B_1 = [\betamin,+\infty)\,, \qquad  \betamin=-\Cr(1,0)\,,\\
	B = B_{01} \cap (B_0 \times B_1)\,,
	\end{gather*}
	and the triple $(\h,\g,B)$ is admissible in the sense of Definition \ref{def:admissibility}.
	
	Conversely, given admissible $(\h,\g,B)$ and $\hB$ describing $B$ we have $W_{\h,\g,B}=W_{\Dl,\Dm,\Dr}$ with $\Cl$, $\Cr$ and $\Cm$ being induced by $\h$, $\g$ and $\hB$ as given in \eqref{eqn:cInduced} (for the conversion of $\g$ into $\Cr$ the arguments $m_0$ and $m_1$ have to be swapped). $(\Dl,\Dm,\Dr)$ are local discrepancy measures in the sense of Definition \ref{def:LocalSimilarityMeasure}.
\end{corollary}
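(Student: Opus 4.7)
The plan is to derive the corollary almost immediately from Proposition \ref{prop:PrimalDualSandwich} and Lemma \ref{lem:chConversion}. The predual formulation \eqref{eqn:predual} already expresses $W_{\Dl,\Dm,\Dr}(\rho_0,\rho_1)$ as a supremum of $\int \h(\alpha)\,\d\rho_0 + \int \g(\beta)\,\d\rho_1$ over pairs $(\alpha,\beta)\in\Lip(\Omega)^2$ satisfying $(\alpha(x),\beta(x))\in B_{01}\cap(B_0\times B_1)$ pointwise, with $\h,\g,\hB$ given exactly as in the corollary and with $B_0,B_1$ being the closures of the effective domains of $\h$ and $\g$. So the first task is to identify these $B_i$ with the half-lines $[\alphamin,\infty)$ and $[\betamin,\infty)$ claimed in the statement. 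This follows directly from \eqref{eqn:CConjugateAlphaMin} in Lemma \ref{lem:conjugateD}, which gives $\cl\{\alpha\in\R\mid\h(\alpha)>-\infty\}=(-\infty,\Cl(0,1)]$ after the sign flip in \eqref{eqn:predualInducedH}, i.e.\ $B_0=[-\Cl(0,1),+\infty)=[\alphamin,+\infty)$, and analogously for $B_1$.

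Next I verify admissibility of $(\h,\g,B)$ in the sense of Definition \ref{def:admissibility}. By Lemma \ref{lem:chConversion} applied to $\Cl$ and to $\Cr$ (with arguments swapped) the functions $\h$ and $\g$ satisfy all five conditions on $h$. To justify the set $B$ being of the form \eqref{eqn:BIntersection}--\eqref{eqn:BHypograph}, I note that since $\Cm$ is convex, lower semi-continuous and positively $1$-homogeneous, its conjugate is the indicator function of a closed convex set, $\Cm^\ast=\iota_{B_{01}}$. Applying Lemma \ref{lem:chConversion} once to $m\mapsto\Cm(1,m)$ and once to $m\mapsto\Cm(m,1)$ produces $\hB$ and $\gB$ both satisfying the five admissibility conditions, and in each case the $1$-homogeneity of $\Cm$ together with the conjugate calculation shows
\[ B_{01}=\{\Cm^\ast\le 0\}=\{(\alpha,\beta)\mid \alpha\le \hB(-\beta)\}=\{(\alpha,\beta)\mid \beta\le \gB(-\alpha)\}, \]
with the consistency relation $\hB=-\gB^{-1}(-\cdot)$ on the relevant domain. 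This gives the two equivalent descriptions of $B_{01}$ asserted in the corollary.

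For the converse, given admissible $(\h,\g,B)$ together with $\hB$ from Definition \ref{def:admissibility}, I use the construction \eqref{eqn:cInduced} of Lemma \ref{lem:chConversion} to define $\Cl$ from $\h$, $\Cr$ from $\g$ (with arguments $m_0,m_1$ interchanged), and $\Cm$ from $\hB$; by the second half of Lemma \ref{lem:chConversion} these are valid local discrepancy integrands inducing local discrepancies $\Dl,\Dm,\Dr$. Moreover, the same lemma yields $\h=-[\Cl(1,\cdot)]^\ast(-\cdot)$, $\g=-[\Cr(\cdot,1)]^\ast(-\cdot)$, and $\hB=-[\Cm(1,\cdot)]^\ast(-\cdot)$, so applying the already-established forward direction to this $(\Dl,\Dm,\Dr)$ recovers precisely the given $(\h,\g,B)$, hence $W_{\h,\g,B}=W_{\Dl,\Dm,\Dr}$.

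The main obstacle I anticipate is the bookkeeping around the two symmetric characterizations of $B_{01}$ via $\hB$ and $\gB$ and confirming that the $\gB$ produced from $\Cm(\cdot,1)$ is in fact the one required by the admissibility condition $\hB=-\gB^{-1}(-\cdot)$; all the other pieces are direct applications of the two earlier results, so no new analytic machinery is needed beyond Proposition \ref{prop:PrimalDualSandwich} and Lemma \ref{lem:chConversion}.
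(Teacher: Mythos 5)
Your proposal is correct and follows exactly the paper's route: the paper proves the corollary by citing Proposition \ref{prop:PrimalDualSandwich} and Lemma \ref{lem:chConversion} directly, and your additional bookkeeping (identifying $B_0,B_1$ via \eqref{eqn:CConjugateAlphaMin}, the two characterizations of $B_{01}$, and running the forward direction to get the converse) is just an explicit spelling-out of that same argument.
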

\begin{proof}
The claim follows directly from Proposition \ref{prop:PrimalDualSandwich} and Lemma \ref{lem:chConversion}.
\end{proof}

The above identification of different formulations allows to always choose the more convenient for analytical and numerical purposes.
The following corollary makes use of this fact and proves a reduction of particular infimal convolution type discrepancies via Proposition \ref{prop:modelReduction},
which is by far not obvious from Definition~\ref{def:SandwichProblem}.
It can be stated more elegantly with an auxiliary Lemma.
\begin{lemma}[Infimal convolution of local discrepancy measures]
	\label{lem:LocalConvolution}
	Let $D_0$, $D_1$ be two local discrepancy measures in the sense of Definition \ref{def:LocalSimilarityMeasure} with integrands $c_0$, $c_1$, and let $h_i= -[c_i(1,\cdot)]^*(-\cdot)$ for $i\in\{0,1\}$.
	Further, let $c$ be the discrepancy integrand induced by $h=h_0 \circ h_1$ via \eqref{eqn:cInduced} and $D$ be the corresponding discrepancy measure.
	Then, $D=D_0 \diamond D_1$.
\end{lemma}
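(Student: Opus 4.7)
The plan is to establish that $D_0 \diamond D_1$ is itself a local discrepancy in the sense of Definition~\ref{def:LocalSimilarityMeasure}, with integrand
\[
\tilde c(m_0, m_1) \;=\; \inf_{m \geq 0} \bigl[\, c_0(m_0, m) + c_1(m, m_1) \,\bigr],
\]
and then to identify $\tilde c$ with $c$ via the bijection between admissible $h$-functions and local discrepancy integrands supplied by Lemma~\ref{lem:chConversion}. Since that bijection recovers any such integrand uniquely from $h = -[c(1,\cdot)]^\ast(-\cdot)$, it is enough to check that $\tilde c$ induces the same $h$ as $c$, namely $h_0 \circ h_1$.

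To justify the localization step, I would first restrict the infimum defining $D_0 \diamond D_1(\rho_0,\rho_1)$ to $\rho \ll |\rho_0|+|\rho_1|$: decomposing any candidate $\rho$ against $|\rho_0|+|\rho_1|$, the singular part $\rho^\perp$ contributes $(c_0(0,1)+c_1(1,0))\,\measnrm{\rho^\perp}\geq 0$ to the objective and may be dropped at optimality. With $\mu = |\rho_0|+|\rho_1|$ serving as a common reference measure, the objective reads $\int_\Omega [c_0(m_0,g) + c_1(g,m_1)]\,\d\mu$ with $g = \RadNik{\rho}{\mu}$, and a standard measurable selection argument for the normal integrand $(x,g)\mapsto c_0(m_0(x),g) + c_1(g,m_1(x))$ exchanges $\inf_{g\geq 0}$ with the integral, yielding $\int_\Omega \tilde c(m_0,m_1)\,\d\mu$. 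Checking the conditions of Definition~\ref{def:LocalSimilarityMeasure} for $\tilde c$ is then routine: positive $1$-homogeneity and joint convexity survive infimal convolution in the middle slot, lower semi-continuity follows from lower semi-continuity of $c_0,c_1$ together with their coercivity properties, $\tilde c(m,m)=0$ is obtained by setting $m'=m$, strict positivity off the diagonal is forced by $c_i(m,m')=0 \iff m=m'$, and $\tilde c = +\infty$ on the negative cone is inherited from $c_0,c_1$.

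It remains to compute $\tilde h(\alpha) = -[\tilde c(1,\cdot)]^\ast(-\alpha)$. Swapping suprema in the conjugation yields
\[
[\tilde c(1,\cdot)]^\ast(\beta) \;=\; \sup_{m\geq 0}\bigl[\, -c_0(1,m) + [c_1(m,\cdot)]^\ast(\beta) \,\bigr].
\]
Positive $1$-homogeneity of $c_1$ gives $[c_1(m,\cdot)]^\ast(\beta) = m\,[c_1(1,\cdot)]^\ast(\beta)$ for $m>0$, and this identity extends to $m=0$ using the characterization of the effective domain from \eqref{eqn:CConjugateAlphaMin} (with the convention $0\cdot\infty = \infty$, which is consistent with $[c_1(0,\cdot)]^\ast = \iota_{(-\infty,c_1(0,1)]}$). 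Writing $A = [c_1(1,\cdot)]^\ast(\beta)$ and using that $c_0(1,\cdot) = +\infty$ on the negatives so that the sup over $m\geq 0$ coincides with the full-line conjugate, we obtain $[\tilde c(1,\cdot)]^\ast(\beta) = [c_0(1,\cdot)]^\ast(A) = [c_0(1,\cdot)]^\ast \circ [c_1(1,\cdot)]^\ast(\beta)$. Substituting $\beta=-\alpha$ and applying $h_i(\alpha) = -[c_i(1,\cdot)]^\ast(-\alpha)$ twice gives $\tilde h(\alpha) = h_0(h_1(\alpha)) = h(\alpha)$, and the identification $\tilde c = c$, hence $D_0 \diamond D_1 = D$, follows at once.

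The main technical obstacle is the localization step, particularly the measurable selection that turns the $\inf$ over measures $\rho$ into a pointwise $\inf$ over densities; a secondary, milder subtlety is the boundary case $m=0$ on the primal side (equivalently $\beta > c_1(0,1)$ on the dual side), where one must verify that the recession behaviour of $c_0$ and $c_1$ compatibly produces $+\infty$ without continuity of $m \mapsto [c_1(m,\cdot)]^\ast$ at $m=0$. Both are handled by the $1$-homogeneity and the effective-domain identification \eqref{eqn:CConjugateAlphaMin}; everything else is routine convex-conjugate arithmetic.
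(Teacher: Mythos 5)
Your proposal is correct in substance, but it identifies the inf-convolution with $c$ by a genuinely different route than the paper. The localization step — reducing $D_0\diamond D_1$ to the pointwise inf-convolution $\tilde c(m_0,m_1)=\inf_{m\geq0}c_0(m_0,m)+c_1(m,m_1)$ of the integrands — is common to both arguments; the paper dispatches it with ``arguments as in Lemma~\ref{lem:conjugateD}'', which is essentially your restriction to $\rho\ll|\rho_0|+|\rho_1|$ plus measurable selection. For the identification itself the paper works on the primal side: for $m_0,m_1>0$ it converts $\inf_m c_0(m_0,m)+c_1(m,m_1)$ into a supremum via the Fenchel--Rockafellar theorem (checking the constraint qualification $h_0(0)=g_1(0)=0$ with continuity at $0$), chains the two hypograph constraints into $\alpha\leq h_0(h_1(-\beta))$, and reads off $c(m_0,m_1)$ from \eqref{eqn:cInduced}; the rays $m_0=0$ or $m_1=0$ are then recovered by right-continuity of convex, l.s.c., one-homogeneous functions. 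You instead conjugate $\tilde c(1,\cdot)$ directly — a sup-swap needing no constraint qualification — obtain $[\tilde c(1,\cdot)]^\ast=[c_0(1,\cdot)]^\ast\circ[c_1(1,\cdot)]^\ast$, hence $\tilde h=h_0\circ h_1$, and then invoke injectivity of the correspondence $c\mapsto h$. This works, but two points need firming up. First, Lemma~\ref{lem:chConversion} only proves that $h\mapsto c\mapsto h$ is the identity; the injectivity you need (that $c$ is determined by $h$) additionally requires $c(1,\cdot)=[c(1,\cdot)]^{\ast\ast}$ via Fenchel--Moreau together with the fact that the boundary values $c(0,1)$, $c(1,0)$ of a convex, l.s.c., one-homogeneous integrand are forced by its values on the open quadrant — which is precisely the right-continuity argument the paper uses at the end of its proof, so this step cannot be fully delegated to the ``bijection''. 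Second, the identity $[c_1(m,\cdot)]^\ast(\beta)=m\,[c_1(1,\cdot)]^\ast(\beta)$ with the convention $0\cdot\infty=\infty$ fails at the single point $m=0$, $\beta=c_1(0,1)$ whenever the domain of $[c_1(1,\cdot)]^\ast$ is open at its right endpoint (e.g.\ the Hellinger integrand, where $[c(1,\cdot)]^\ast(1)=\infty$ while $[c(0,\cdot)]^\ast(1)=0$); this is harmless for your conclusion, since the terms with $m>0$ already drive the outer supremum to $+\infty$ at that $\beta$, but it should be stated rather than absorbed into the convention.
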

\begin{proof}
	It is easy to see that $h=h_0 \circ h_1$ is indeed admissible in the sense of Definition \ref{def:admissibility} and therefore, by virtue of Lemma \ref{lem:chConversion} $c$ is an admissible local discrepancy integrand. With arguments as in Lemma \ref{lem:conjugateD} one can show that $D=D_0 \diamond D_1$ is equivalent to $c=c_0 \diamond c_1$. Consider now $m_0$, $m_1 > 0$. Introducing $g_1= -[c_1(\cdot,1)]^*(-\cdot)$, we find
	\begin{align*}
		(c_0 \diamond c_1)(m_0,m_1) & = \inf_{m \in \R}
			c_0(m_0,m) + c_1(m,m_1)
		= \sup_{\alpha \in \R} 
			\underbrace{-[c_0(m_0,\cdot)]^*(-\alpha)}_{m_0 \cdot h_0(\alpha)}
			\underbrace{-[c_1(\cdot,m_1)]^*(\alpha)}_{m_1 \cdot g_1(-\alpha)} \\
		& = \sup_{\substack{(\alpha,\beta) \in \R^2 \colon\\
			\beta \leq g_1(-\alpha)}} m_0 \cdot h_0(\alpha) + m_1 \cdot \beta 
		 = \sup_{\substack{(\alpha,\beta) \in \R^2 \colon\\
			\alpha \leq h_1(-\beta)}} m_0 \cdot h_0(\alpha) + m_1 \cdot \beta \\
		& = \sup_{\substack{(\alpha,\beta) \in \R^2 \colon\\
			\alpha \leq h_0(h_1(-\beta))}} m_0 \cdot \alpha + m_1 \cdot \beta = c(m_0,m_1)\,.
	\end{align*}
	In the second equality we have used the Fenchel--Rockafellar Theorem and that $h_0(0)=g_1(0)=0$ and both functions are continuous at $0$ (see Definition \ref{def:admissibility}).
	For $m_0<0$ or $m_1<0$ it is easy to verify that $(c_0 \diamond c_1)(m_0,m_1)=\infty$, and so is $c(m_0,m_1)$ since it is admissible.
	Finally, since $c$, $c_0$, and $c_1$ (and thus also $(c_0 \diamond c_1)$ by Remark \ref{rem:propertiesInfConv}\eqref{enm:metricInfConv}) are convex and lower semi-continuous, they must all be right-continuous in $m_0=0$ or $m_1=0$.
	Since $c$ and $(c_0 \diamond c_1)$ coincide on $(0,+\infty)^2$, this implies their equality also on $\{0\} \times [0,\infty) \cup [0,\infty) \times \{0\}$.
\end{proof}

\begin{corollary}[Reduction of infimal convolution formulation]
	\label{cor:reductionInfimalConv}
	Let the local discrepancies $\Dl$, $\Dm$, and $\Dr$ be induced by the integrands $\Cl$, $\Cm$, and $\Cr$, and let $\h$, $\g$, $\hB$, and $\gB$ be as in Corollary \ref{cor:equivalenceStatic}.
	Furthermore let $z\in(0,\infty]$, and let the extended discrete metric be defined by \eqref{eqn:discreteMetric}.
	\begin{itemize}
	\item If $\Cm(m_0,m_1)=z|m_0-m_1|$ for all $m_1>m_0 \geq 0$, then
		\begin{equation*}
		W_{\Dl,\Dm,\Dr}
		=W_{\Dl \diamond \Dm,D^d,\Dr}
		=\Dl \diamond \Dm\diamond W_1\diamond\Dr\,.
		\end{equation*}
	\item If $\Cm(m_0,m_1)=z|m_0-m_1|$ for all $0 \leq m_1<m_0$, then
		\begin{equation*}
		W_{\Dl,\Dm,\Dr}
		=W_{\Dl,D^d,\Dm \diamond \Dr}
		=\Dl\diamond W_1\diamond \Dm \diamond \Dr\,.
		\end{equation*}
	\end{itemize}
\end{corollary}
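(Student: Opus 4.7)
The plan is to match the hypothesis of Proposition~\ref{prop:modelReduction} and then read off the conclusion through Corollary~\ref{cor:equivalenceStatic}, Lemma~\ref{lem:LocalConvolution}, and the algebraic identities $W_1\diamond W_1 = W_1$ and $W_1\diamond D^d = D^d \diamond W_1 = W_1$ from Remark~\ref{rem:UnbalancedMeasures}. I only write out the first bullet, as the second follows by completely symmetric use of the second bullet of Proposition~\ref{prop:modelReduction}.

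The first step is to verify $\gB(\alpha)=\min\{\alpha,z\}$ for $\alpha>0$ from the assumption $\Cm(m_0,m_1)=z|m_0-m_1|$ on $\{m_1>m_0\geq 0\}$. Unfolding $\gB(\alpha)=-[\Cm(\cdot,1)]^\ast(-\alpha)=-\sup_{m\geq 0}\bigl[(-\alpha)m-\Cm(m,1)\bigr]$, the hypothesis together with $\Cm(1,1)=0$ pins down $\Cm(m,1)=z(1-m)$ on $[0,1]$, so the sup on $[0,1]$ evaluates to $\max(-\alpha,-z)$. For $m>1$ the value of $\Cm(m,1)$ is not prescribed, but $\Cm\geq0$ alone suffices: for $\alpha\geq 0$ one has $(-\alpha)m-\Cm(m,1)\leq(-\alpha)m\leq-\alpha$, so these $m$ do not improve the sup. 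Hence $\gB(\alpha)=\min(\alpha,z)$ on $(0,\infty)$, which is the hypothesis of the first bullet of Proposition~\ref{prop:modelReduction} with $\gamma=z$, giving
\[
W_{\h,\g,B} \;=\; W_{\h\circ\hB,\,\g,\,B(\tildealphamin,\betamin)}
\qquad\text{with }\tildealphamin = -\min\{\Cl(0,1),z\}.
\]

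The second step identifies this right-hand side with $W_{\Dl\diamond\Dm,D^d,\Dr}$ through Corollary~\ref{cor:equivalenceStatic}. The admissible triple associated with $W_{\Dl\diamond\Dm,D^d,\Dr}$ has left dual function $-[(\Cl\diamond\Cm)(1,\cdot)]^\ast(-\cdot)=\h\circ\hB$ (by Lemma~\ref{lem:LocalConvolution}), right dual function $\g$, middle function $-[c^d(1,\cdot)]^\ast(-\cdot)=\id$ (since $c^d(1,m)=\iota_{\{1\}}(m)$ forces the conjugate sup to be attained at $m=1$), and associated $\alphamin$-value $-(\Cl\diamond\Cm)(0,1)$. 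A one-variable minimization of $m\mapsto\Cl(0,m)+\Cm(m,1)=m\,\Cl(0,1)+z(1-m)$ on $[0,1]$, with values on $(1,\infty)$ not improving it by the same non-negativity argument as above, yields $(\Cl\diamond\Cm)(0,1)=\min\{\Cl(0,1),z\}$, matching $\tildealphamin$. Both admissible triples then coincide, proving the first equality. The second equality is immediate: unfolding $W_{\Dl\diamond\Dm,D^d,\Dr}$ per Definition~\ref{def:SandwichProblem} and collapsing $W_1\diamond D^d\diamond W_1=W_1$ using associativity of $\diamond$ yields $\Dl\diamond\Dm\diamond W_1\diamond\Dr$.

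The main subtlety is the first step: the hypothesis controls $\Cm$ only on the half-plane $\{m_1>m_0\}$, while $[\Cm(\cdot,1)]^\ast$ in principle depends on the full profile of $\Cm(\cdot,1)$. Non-negativity of $\Cm$ together with $\Cm(1,1)=0$ being the global minimum is exactly what makes the unknown values on $(1,\infty)$ invisible to the conjugate at non-positive arguments, and hence irrelevant to verifying the hypothesis of Proposition~\ref{prop:modelReduction}.
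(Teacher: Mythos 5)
Your proof is correct and follows the same route as the paper's: translate via Corollary~\ref{cor:equivalenceStatic}, reduce via the first bullet of Proposition~\ref{prop:modelReduction} after checking $\gB(\alpha)=\min\{\alpha,z\}$, identify the result with $W_{\Dl\diamond\Dm,D^d,\Dr}$ via Lemma~\ref{lem:LocalConvolution}, and collapse $W_1\diamond D^d\diamond W_1=W_1$ by Remark~\ref{rem:UnbalancedMeasures}. You merely spell out the steps the paper labels ``straightforward to check'' -- in particular that the unspecified values of $\Cm$ on $\{m_1<m_0\}$ cannot affect $[\Cm(\cdot,1)]^\ast$ at nonpositive arguments, and that $\tildealphamin=-(\Cl\diamond\Cm)(0,1)$ -- both of which are verified correctly.
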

\begin{proof}
Consider the first statement (the second follows analogously).
The second equality follows from $W_1\diamond D^d\diamond W_1=W_1\diamond W_1=W_1$ (Remark \ref{rem:UnbalancedMeasures}).
As for the first equality, by Corollary \ref{cor:equivalenceStatic} and Proposition \ref{prop:modelReduction} we have
\begin{equation*}
W_{\Dl,\Dm,\Dr}
=W_{\h,\g,B}
=W_{\h\circ\hB,\g,B(\tildealphamin,\betamin)}\,,
\end{equation*}
where we have used the notation from Corollary \ref{cor:equivalenceStatic} and Proposition \ref{prop:modelReduction}
as well as the fact $\gB(\alpha)=\min\{\alpha,z\}$ for all $\alpha\geq0$.
Now it is straightforward to check via Lemma \ref{lem:LocalConvolution} and Corollary \ref{cor:equivalenceStatic} that the equivalent formulation of $W_{\Dl \diamond \Dm,D^d,\Dr}$ is also $W_{\h\circ\hB,\g,B(\tildealphamin,\betamin)}$.
\end{proof}

Let us close by giving yet another, flow-based formulation.

\begin{remark}[Flow formulation]\label{rem:flowFormulation}
In Euclidean space $\R^n$, the following relation is known as Beckmann's problem \cite[Thm.\,4.6]{Santambrogio15},
\begin{equation*}
W_1(\rho_0,\rho_1)=\min_{\phi\in\meas(\Omega)^n,\,\div \phi=\rho_1-\rho_0}\||\phi|\|_{\meas}\,,
\end{equation*}
where the divergence is taken in any open superset of $\Omega$ (cf.\ also \cite{LellmannKantorovichRubinstein2014}) in the distributional sense.
As a consequence we have the alternative flow formulation
\begin{multline*}
W_{\Dl,\Dm,\Dr}(\rho_0,\rho_1)=
\min_{\rho_0'',\rho_1''\in\measp(\Omega),\,\phi,\psi\in\meas(\Omega)^n}\Dl(\rho_0,\rho_0''-\div \phi)+\||\phi|\|_\meas\\+\Dm(\rho_0'',\rho_1'')+\||\psi|\|_\meas+\Dr(\rho_1''-\div \psi,\rho_1)\,,
\end{multline*}
where $\phi$ and $\psi$ have the interpretation of a mass flow field and can thus be used in applications to extract flow information.
The divergence essentially arises via dualization of the Lipschitz constraint in \eqref{eqn:W1predual}, interpreted as a local constraint $|\nabla\alpha|\leq1$ almost everywhere
(see Section~\ref{sec:variableSplitting}).
\end{remark}

\subsection{The inhomogeneous case}
\label{sec:Inhomogeneous}
For ease of exposition we restricted ourselves to the spatially homogeneous case in the previous sections.
For the sake of completeness we shall here briefly comment on what changes if all integrands become space-dependent.
In more detail, $\h$, $\g$, and $B$ in Definition~\ref{def:GeneralizedPrimalProblem} are now also allowed to depend on $x\in\Omega$, and the energy becomes
\begin{equation*}
E_{\h,\g,B}^{\rho_0,\rho_1}(\alpha,\beta) = \begin{cases}
	\int_\Omega \h(\alpha(x),x)\,\d\rho_0(x)\\ \;+ \int_\Omega \g(\beta(x),x)\,\d\rho_1(x)&
	\text{if }\alpha, \beta \in \Lip(\Omega)\text{ with }(\alpha(x),\beta(x)) \in B(x) \forall\, x \in\Omega\,,\\-\infty&\text{else.}\end{cases}
\end{equation*}
The spatial dependence demands two additional admissibility conditions in Definition~\ref{def:admissibility}:
$\h$, $\g$, and $B$ must depend measurably on $x$ so that $E_{\h,\g,B}^{\rho_0,\rho_1}$ is well-defined, where $B(x)=B_{01}(x)\cap(B_0(x)\times B_1(x))$ for
\begin{gather*}
B_{01}(x)=\left\{(\alpha,\beta)\in\R^2\,\right|\left.\alpha\leq\hB(-\beta,x)\right\}=\left\{(\alpha,\beta)\in\R^2\,\right|\left.\beta\leq\gB(-\alpha,x)\right\}\,,\\
B_0(x)=\cl\{\alpha\in\R\,|\,\h(\alpha,x)>-\infty\}\,,\qquad
B_1(x)=\cl\{\beta\in\R\,|\,\g(\beta,x)>-\infty\}\,.
\end{gather*}
Furthermore,
\begin{equation*}
\tilde{B}_0(x)=\{(\alpha,\beta)\in\R^2\,|\,\alpha\leq\h(-\beta,x)\}\,,\quad
\tilde{B}_1(x)=\{(\alpha,\beta)\in\R^2\,|\,\beta\leq\g(-\alpha,x)\}\,,\text{ and }B(x)
\end{equation*}
have to be lower semi-continuous in $x$
(a multifunction $x\mapsto B(x)$ is said to be lower semi-continuous if the set $\{x\in\Omega\,|\,U\cap B(x)\neq\emptyset\}$ is open for any open set $U$)
to ensure sequential \mbox{weak-*} lower semi-continuity of $W_{\h,\g,B}$.
At first glance one might expect the stronger requirement of $\h$ and $\g$ being upper semi-continuous in $x$, however,
due to the restriction of $\alpha$ and $\beta$ to the domains of $\h$ and $\g$ we essentially only need upper semi-continuity where $\h$ and $\g$ are finite,
resulting in lower semi-continuity of $\tilde{B}_0$ and $\tilde{B}_1$.
That this heuristic intuition is correct will turn out from the model equivalence below.

Likewise, the local discrepancy integrand in Definition~\ref{def:LocalSimilarityMeasure} may depend on $x$ as well, inducing a discrepancy
\begin{equation*}
D(\rho_0,\rho_1) = \int_\Omega c(\RadNik{\rho_0}{\rho}(x),\RadNik{\rho_1}{\rho}(x),x)\,d\rho(x)\,.
\end{equation*}
Of course, in addition to the properties from Definition~\ref{def:LocalSimilarityMeasure} $c$ is required to be measurable also in $x$.
Furthermore it needs to be jointly lower semi-continuous in all its arguments,
which can easily be seen to be a necessary condition for sequential weak-* lower semi-continuity of $D$ (its sufficiency is shown below).

Under the above natural conditions, the equivalence between both models stays valid with slight proof variations.
In particular, the proof of weak-* lower semi-continuity, Proposition~\ref{prop:LSCLocDis}, becomes more complicated as more work would be required to apply \cite[Thm.\,2]{BouchitteValadier88} in the last step.
Instead we can make use of \cite[Lemma\,3.7]{BoBu90} (which performs that additional work inside its proof).
Before, we require a little lemma which takes the role of \cite[Lemma\,3.6]{BoBu90}.

\begin{lemma}
$F:\meas(\Omega)^2\to(-\infty,\infty]$ is weakly-* lower semi-continuous
if and only if $F_\varepsilon(\rho)=F(\rho)+\varepsilon\int_\Omega
	\binom{1}{1}
	\cdot\d\rho$ is so for any $\varepsilon>0$.
\end{lemma}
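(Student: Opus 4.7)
The plan is to observe that the additional term is weakly-* continuous, so adding or subtracting it cannot affect lower semi-continuity.

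More precisely, first I would note that the map
\begin{equation*}
L:\meas(\Omega)^2\to\R,\qquad L(\rho_0,\rho_1)=\int_\Omega\binom{1}{1}\cdot\d\rho=\rho_0(\Omega)+\rho_1(\Omega)
\end{equation*}
is weakly-* continuous. Indeed, since $\Omega$ is compact, the constant function $1$ lies in $C(\Omega)$, which is the predual of $\meas(\Omega)$; hence by definition of the weak-* topology both $\rho_i\mapsto\rho_i(\Omega)=\langle 1,\rho_i\rangle$ are weakly-* continuous for $i=0,1$, and so is their sum $L$.

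Next, for the forward direction I would use that the sum of a weakly-* lower semi-continuous function with a weakly-* continuous function is again weakly-* lower semi-continuous. Concretely, if $F$ is lower semi-continuous and $\rho^a\to\rho$ weakly-*, then
\begin{equation*}
\liminf_{a} F_\varepsilon(\rho^a)
=\liminf_{a}\bigl(F(\rho^a)+\varepsilon L(\rho^a)\bigr)
\geq F(\rho)+\varepsilon L(\rho)
=F_\varepsilon(\rho)\,,
\end{equation*}
where we used the continuity of $L$ to pass $\varepsilon L(\rho^a)\to\varepsilon L(\rho)$. For the reverse direction I would apply the same reasoning to $F=F_\varepsilon-\varepsilon L$, noting that $-\varepsilon L$ is also weakly-* continuous, so lower semi-continuity of $F_\varepsilon$ transfers to $F$. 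Since $F$ maps to $(-\infty,\infty]$, all additions are unambiguously defined, so there is no complication from indeterminate forms.

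There is essentially no obstacle here: the entire content is the triviality that weak-* continuity of the linear functional $L$ (coming from compactness of $\Omega$) makes the modification $F\rightsquigarrow F_\varepsilon$ semi-continuity-preserving in both directions. The role of this lemma in the subsequent argument is presumably to allow a reduction to strictly positive (coercive) integrands when applying \cite[Lemma 3.7]{BoBu90}, but that is a matter for the next step in the paper.
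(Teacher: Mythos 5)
Your proof is correct and is exactly the argument the paper gives (the paper's own proof is the one-line observation that $\varepsilon\int_\Omega\binom{1}{1}\cdot\d\rho$ is a weak-* continuous perturbation, which you have simply spelled out). No issues.
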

\begin{proof}
This follows directly from the fact that $\varepsilon \int_\Omega \binom{1}{1} \cdot\d\rho$ is a weak-* continuous perturbation.
\end{proof}

\begin{proposition}[Lower semi-continuity]
$D(\rho_0,\rho_1)= \int_\Omega c(\RadNik{\rho_0}{\rho}(x),\RadNik{\rho_1}{\rho}(x),x)\,d\rho(x)$ is weakly-* lower semi-continuous on $\meas(\Omega)^2$.
\end{proposition}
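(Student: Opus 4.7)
My plan is to reduce to a strictly coercive integrand and then invoke \cite[Lemma~3.7]{BoBu90}, exactly as the paragraph preceding the statement suggests. By the little lemma just established, it is enough to prove sequential weak-* lower semi-continuity of
\begin{equation*}
D_\varepsilon(\rho_0,\rho_1) = D(\rho_0,\rho_1) + \varepsilon\,\rho_0(\Omega) + \varepsilon\,\rho_1(\Omega)
\end{equation*}
for some fixed $\varepsilon>0$, the extra term being weak-* continuous and hence not affecting lower semi-continuity.

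The key observation is that $D_\varepsilon$ is again of the same integral form, with integrand $c_\varepsilon(m_0,m_1,x) = c(m_0,m_1,x) + \varepsilon(m_0+m_1)$: one simply absorbs the two linear perturbations into the integral against any reference measure $\rho$ with $\rho_0,\rho_1 \ll \rho$ via the respective Radon--Nikodym derivatives. The perturbed $c_\varepsilon$ inherits from $c$ all the required structural properties — joint lower semi-continuity in $(m_0,m_1,x)$, measurability in $x$, convexity and positive $1$-homogeneity in $(m_0,m_1)$, and the value $+\infty$ outside the nonnegative cone — and additionally satisfies the strict coercivity bound $c_\varepsilon(m_0,m_1,x)\geq\varepsilon(|m_0|+|m_1|)$ on its effective domain. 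This strict positivity away from the origin is precisely the ingredient that is absent for $c$ itself (so that the argument used in Proposition~\ref{prop:LSCLocDis}, which relied on the simpler Bouchitté--Valadier result, cannot be repeated verbatim), and it is exactly the hypothesis under which \cite[Lemma~3.7]{BoBu90} asserts sequential weak-* lower semi-continuity of $D_\varepsilon$ on $\meas(\Omega)^2$.

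The step I expect to require the most care is a bookkeeping one: verifying that $c_\varepsilon$, interpreted as a normal convex integrand in the sense of \cite{BoBu90}, really matches their framework in the inhomogeneous case — in particular that the measurable dependence on $x$ is handled by their measurable-selection machinery, that the recession/singular-part contribution is correctly accounted for by the positive $1$-homogeneity, and that the weak-* topology on $\meas(\Omega)^2$ agrees with the one used in their lemma. Once this identification is carried out, \cite[Lemma~3.7]{BoBu90} delivers sequential weak-* lower semi-continuity of $D_\varepsilon$, and the preceding lemma transfers the conclusion from $D_\varepsilon$ back to $D$, completing the proof.
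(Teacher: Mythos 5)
Your proposal is correct and follows essentially the same route as the paper: reduce to the perturbed integrand $c_\varepsilon(m_0,m_1,x)=c(m_0,m_1,x)+\varepsilon(m_0+m_1)$ via the preceding lemma and then invoke \cite[Lemma\,3.7]{BoBu90}, whose applicability hinges precisely on the resulting $\varepsilon$-ball contained in the polar sets $A(x)=\{u\,|\,u\cdot m\leq c_\varepsilon(m,x)\ \forall m\}$. The only point you leave implicit that the paper spells out is the ``bookkeeping'' itself: one checks that $x\mapsto A(x)$ is lower semi-continuous (via \cite[Thm.\,17]{BoVa89}), uses Lemma\,3.7 to identify $\sup_{u\in H}\int_\Omega u\cdot\d\rho$ with $D_\varepsilon(\rho_0,\rho_1)$ for continuous selections $u(x)\in A(x)$, and then deduces lower semi-continuity because $D_\varepsilon$ is exhibited as a supremum of weak-* continuous linear functionals.
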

\begin{proof}
\newcommand{\auxSet}{A}
Due to the previous lemma it is sufficient to show weak-* lower semi-continuity of
$D_\varepsilon(\rho_0,\rho_1) \allowbreak=\allowbreak \int_\Omega c_\varepsilon(\RadNik{\rho_0}{\rho}(x),\RadNik{\rho_1}{\rho}(x),x)\,d\rho(x)$
with $c_\varepsilon(m_0,m_1,x)=c(m_0,m_1,x)+\varepsilon(m_0+m_1)$.
Define the sets
\begin{align*}
\auxSet(x) & = \{u\in\R^2\,|\,u\cdot m\leq c_\varepsilon(m_0,m_1,x)\text{ for all }m=(m_0,m_1)\in\R^2\}\,, \\
\tn{and} \quad H & =\{u\in C(\Omega)\,|\,u(x)\in\auxSet(x)\text{ for all }x\in\Omega\}
\end{align*}
and consider a net $\rho^a=(\rho_0^a,\rho_1^a)$ converging weakly-* against $\rho=(\rho_0,\rho_1)$ in $\meas(\Omega)^2$.
For an arbitrary $u\in H$ we have
\begin{equation*}
D_\varepsilon(\rho_0^a,\rho_1^a)
=\int_\Omega c_\varepsilon\left(\RadNik{\rho_0^a}{|\rho^a|},\RadNik{\rho_1^a}{|\rho^a|},x\right)\,\d|\rho^a|
\geq\int_\Omega u(x)\cdot\d\rho^a(x)
\to\int_\Omega u(x)\cdot\d\rho\,.
\end{equation*}
Now note that $\auxSet(x)$ is lower semi-continuous in $x$ due to the lower semi-continuity of $c_\varepsilon$ and $c_\varepsilon(m_0,m_1,x)=\sup_{u\in\auxSet(x)}u_0m_0+u_1m_1$ \cite[Thm.\,17]{BoVa89}
and that $\{u\in\R^2\,|\,|u|\leq\varepsilon\}\subset\auxSet(x)$ for all $x\in\Omega$.
Thus we may apply \cite[Lemma\,3.7]{BoBu90} with $\mu=0$, $\lambda=\rho$, and $h=c_\varepsilon$, yielding
\begin{equation*}
\sup_{u\in H}\int_\Omega u(x)\cdot\d\rho
=\int_\Omega c_\varepsilon\left(\RadNik{\rho_0}{|\rho|},\RadNik{\rho_1}{|\rho|},x\right)\,\d|\rho|
=D_\varepsilon(\rho_0,\rho_1)
\end{equation*}
which concludes the proof.
\end{proof}

Another change is required in Remark~\ref{rem:coercivity}, where the coercivity estimate now turns into $D(\rho_0,\rho_1)\geq\tilde c(\measnrm{\rho_0},\measnrm{\rho_1})$
with $\tilde c(m_0,m_1)$ the convex envelope of $\min_{x\in\Omega}c(m_0,m_1,x)$.
Note that $\tilde c(m_0,m_1)$ is well-defined (due to the lower semi-continuity of $c$ in $x$ and the compactness of $\Omega$) and strictly positive for $m_0\neq m_1$.

The final addition concerns the proof of Corollary \ref{cor:equivalenceStatic},
where we need to show that the lower semi-continuity of $\tilde{B}_0$, $\tilde{B}_1$, and $B$ in $x$ is equivalent to the lower semi-continuity of $\Cl$, $\Cm$, and $\Cr$ in all their arguments.
This is true by \cite[Thm.\,17]{BoVa89}.

\section{Examples}
\label{sec:Examples}
In this section we provide a few examples of local discrepancies and their combinations with $W_1$.
For the reader's convenience we furthermore list all the functionals required
to move from formulation\,\eqref{eq:SandwichProblem} to formulation\,\eqref{eq:GeneralizedPrimalProblem}.

\subsection{Different discrepancy measures}
In Table \ref{tab:discrepancies} and Figures \ref{fig:cost} and \ref{fig:costConj} we list several commonly used discrepancies, the corresponding integrands $c$ from Definition \ref{def:LocalSimilarityMeasure} and their convex conjugates $[c(1,\cdot)]^*$ used in the equivalence relation of Corollary\,\ref{cor:equivalenceStatic}.
The special case of the extended discrete metric $\SimDisc$, which does not allow any mass change, has already been introduced in \eqref{eqn:discreteMetric}.
The standard case of the total variation metric $\SimTV$ is just the norm of the difference; we will see that it has implicitly been used in \cite{LellmannKantorovichRubinstein2014}.
In Section \ref{sec:unbalancedExamples} we discuss the slightly more general variant $\SimTV_{a,b}$ with different weights for increasing and reducing mass.
The examples of squared Hellinger--Kakutani distance $\SimFR$, Jensen--Shannon divergence $\SimJS$, and symmetric $\chi^2$ measure $\SimChi$
have also been considered as examples in \cite[(1.14)-(1.17)]{LieroMielkeSavare-HellingerKantorovich-2015a}.
Further examples are given by the entropy functions $\SimEp$ for $p\in\R$ (see e.\,g.\ \cite[(1.4)]{LieroMielkeSavare-HellingerKantorovich-2015a}).

\begin{table}
\def\arraystretch{1.5}%
\centering
\begin{tabular}{c|c|c}
&$c(m_0,m_1)$
&$[c(1,\cdot)]^\ast(\alpha)$\\\hline
$\SimDisc$
&$\begin{cases}0&\text{if }m_0=m_1\\\infty&\text{else}\end{cases}$
&$\alpha$\\\hline
$\SimTV$
&$|m_1-m_0|$
&$\max(\alpha,-1)+\iota_{(-\infty,1]}(\alpha)$
\\\hline
$\SimTV_{a,b}$
&$\begin{cases}
		a|m_1-m_0|&\text{if }m_1\geq m_0,\\
		b|m_1-m_0|&\text{else.}
		\end{cases}$
&$\max(\alpha,-b)+\iota_{(-\infty,a]}(\alpha)$
\\\hline
$\SimFR$
&$(\sqrt{m_1}-\sqrt{m_0})^2$
&$\frac{\alpha}{1-\alpha}+\iota_{(-\infty,1)}(\alpha)$
\\\hline
$\SimJS$
&$m_0\log_2\frac{2m_0}{m_0+m_1}+m_1\log_2\frac{2m_1}{m_0+m_1}$
&$\log_2\tfrac1{2-2^\alpha}+\iota_{(-\infty,1)}(\alpha)$
\\\hline
$\SimChi$
&$\tfrac{(m_1-m_0)^2}{m_1+m_0}$
&$\begin{cases}-1&\text{if }\alpha<-3\\4-\alpha-4\sqrt{1-\alpha}&\text{if }\alpha\in[-3,1]\\\infty&\text{else}\end{cases}$
\\\hline
$\SimE{0}$
&$m_1-m_0-m_0\log\tfrac{m_1}{m_0}$
&$-\log(1-\alpha)+\iota_{(-\infty,1)}(\alpha)$
\\\hline
$\SimE{1}$
&$m_1\log\tfrac{m_1}{m_0}-m_1+m_0$
&$\exp\alpha-1$
\\\hline
$\SimEp$
&$m_0\tfrac1{p(p-1)}((\tfrac{m_1}{m_0})^p-p(\tfrac{m_1}{m_0}-1)-1)$
&$\frac{(1-(1-p)\alpha)^{p/(p-1)}-1}p+\iota_{(-\infty,1]}((1-p)\alpha)$
\end{tabular}
\caption{Formulas of different local discrepancy integrands and their conjugates. Formulas for $c(m_0,m_1)$ are only valid for $m_0$, $m_1 \geq 0$, otherwise set $c(m_0,m_1)=\infty$.
Note that due to symmetry we have $[c(1,\cdot)]^\ast=[c(\cdot,1)]^\ast$ for all discrepancies except $\SimTV_{a,b}$, for which $[c^{\tn{TV}}_{a,b}(\cdot,1)]^\ast=[c^{\tn{TV}}_{b,a}(1,\cdot)]^\ast$,
and $\SimEp$,
for which $[c^{E,p}(\cdot,1)]^\ast=[c^{E,1-p}(1,\cdot)]^\ast$.}
\label{tab:discrepancies}
\end{table}

\begin{figure}
\setlength{\unitlength}{\linewidth}
\begin{picture}(1,.35)
	\put(.06,0){\includegraphics[height=.37\unitlength]{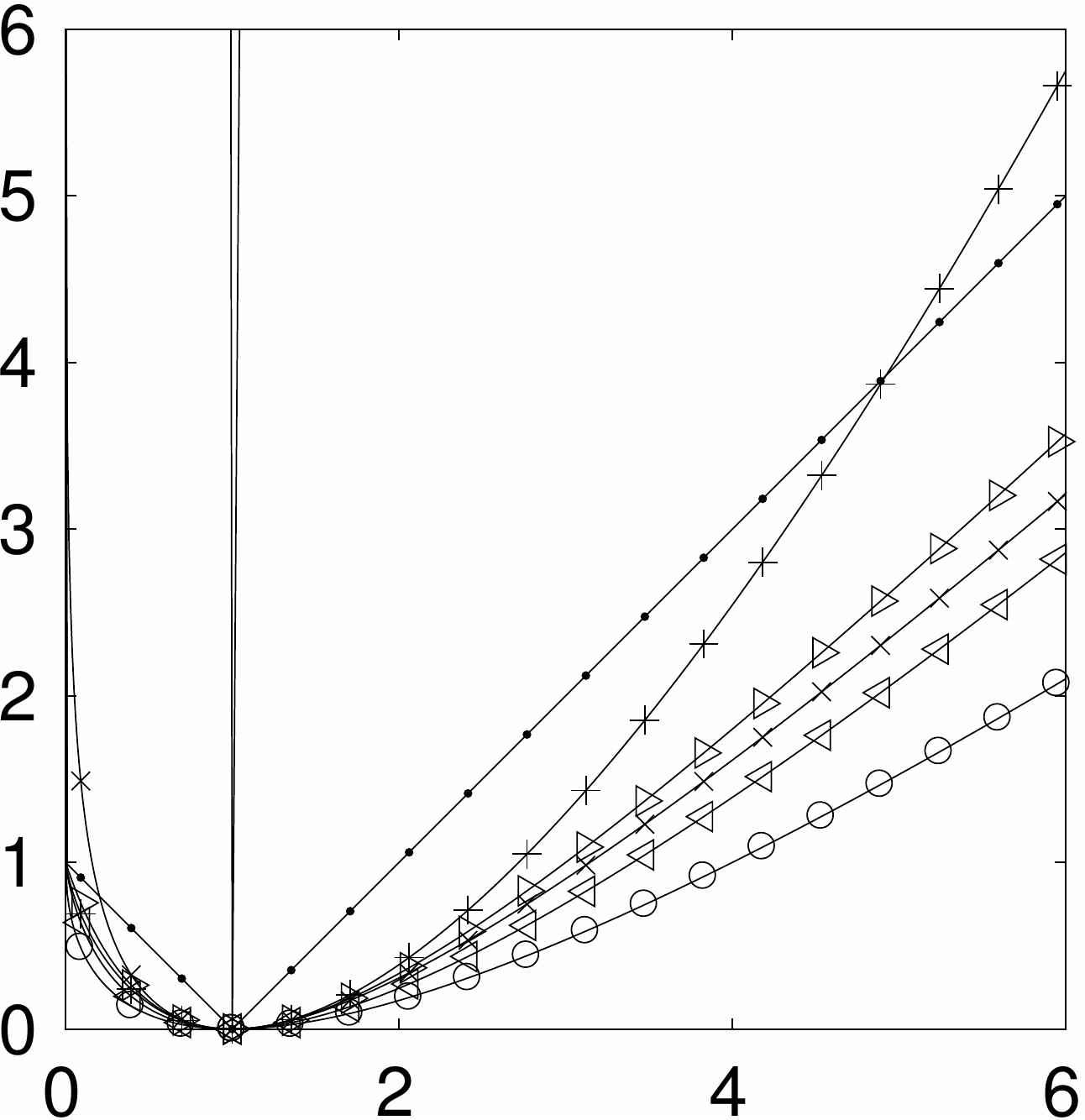}}
	\put(.5,.1){\includegraphics[width=.45\unitlength]{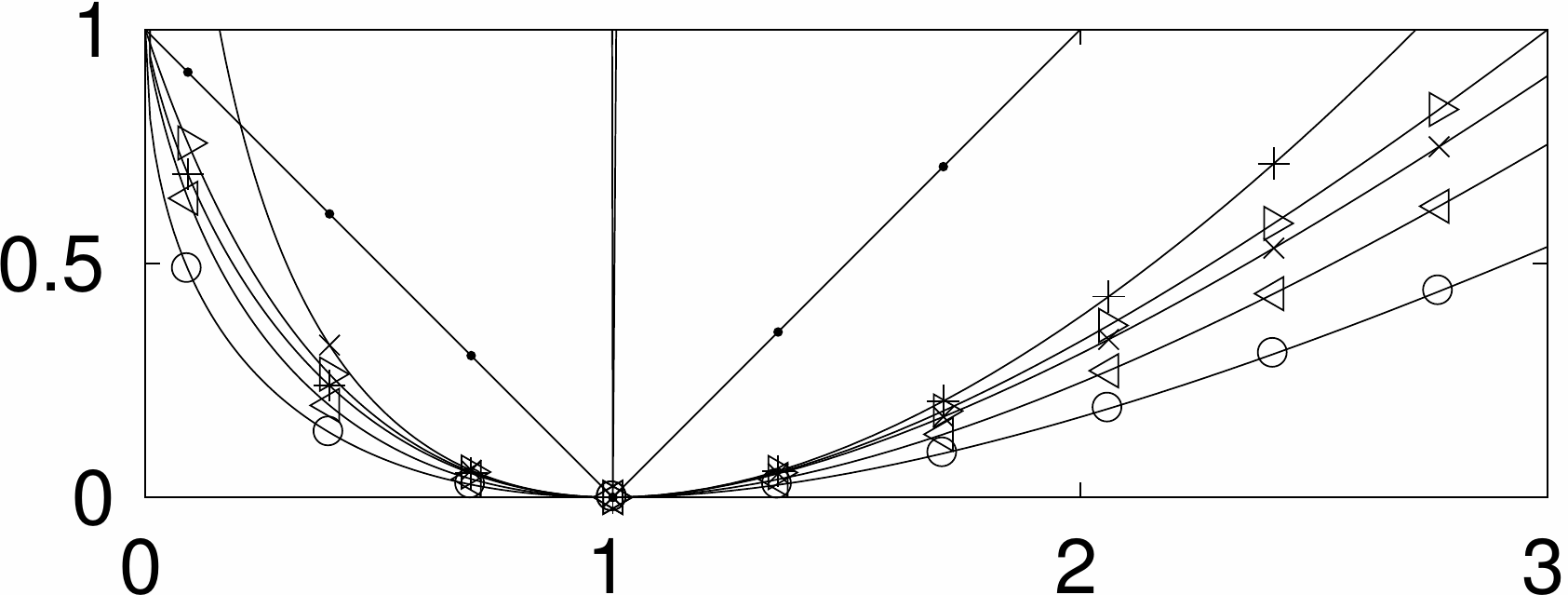}}
	\put(.175,.25){\def\arraystretch{.7}\setlength{\tabcolsep}{0pt}
		\begin{tabular}{cl}
		\small$-$&\small$\SimDisc$\\
		\small$\cdot$&\small$\SimTV$\\
		\small$\circ$&\small$\SimFR$\\
		\small$\triangleleft$&\small$\SimJS$\\
		\small$\triangleright$&\small$\SimChi$\\
		\small$\times$&\small$\SimE{0}$\\
		\small$+$&\small$\SimE{1}$
		\end{tabular}}
\end{picture}
\caption{Graphs of local discrepancy integrands $c$ from Table\,\ref{tab:discrepancies}.
We only plot $c(1,m)$ as a function of $m$, which is sufficient due to the 1-homogeneity of $c$.
The right graph shows a zoom.}
\label{fig:cost}
\end{figure}

\begin{figure}
\setlength{\unitlength}{\linewidth}
\begin{picture}(1,.35)
	\put(.06,0){\includegraphics[height=.37\unitlength]{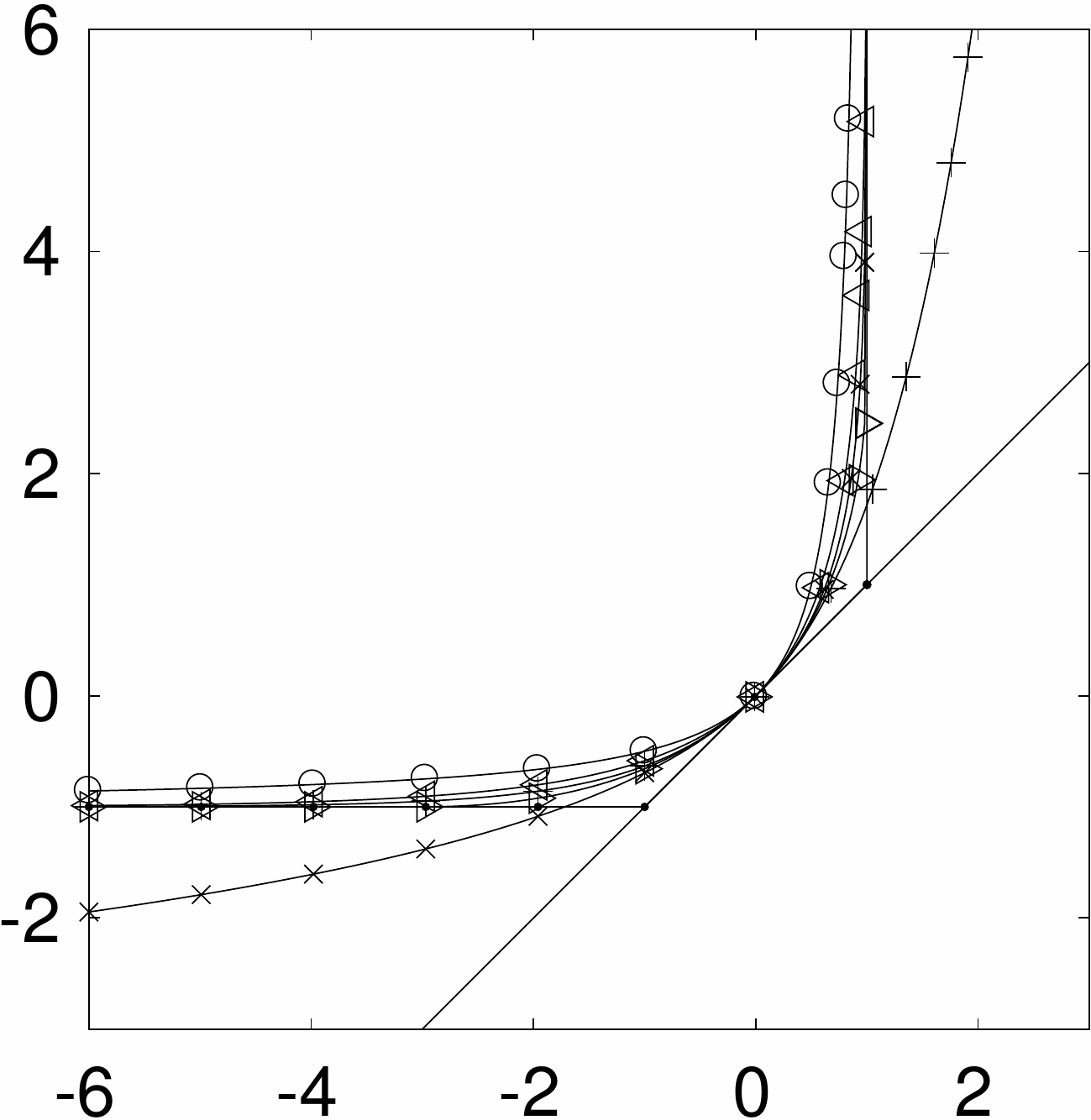}}
	\put(.56,0){\includegraphics[height=.37\unitlength]{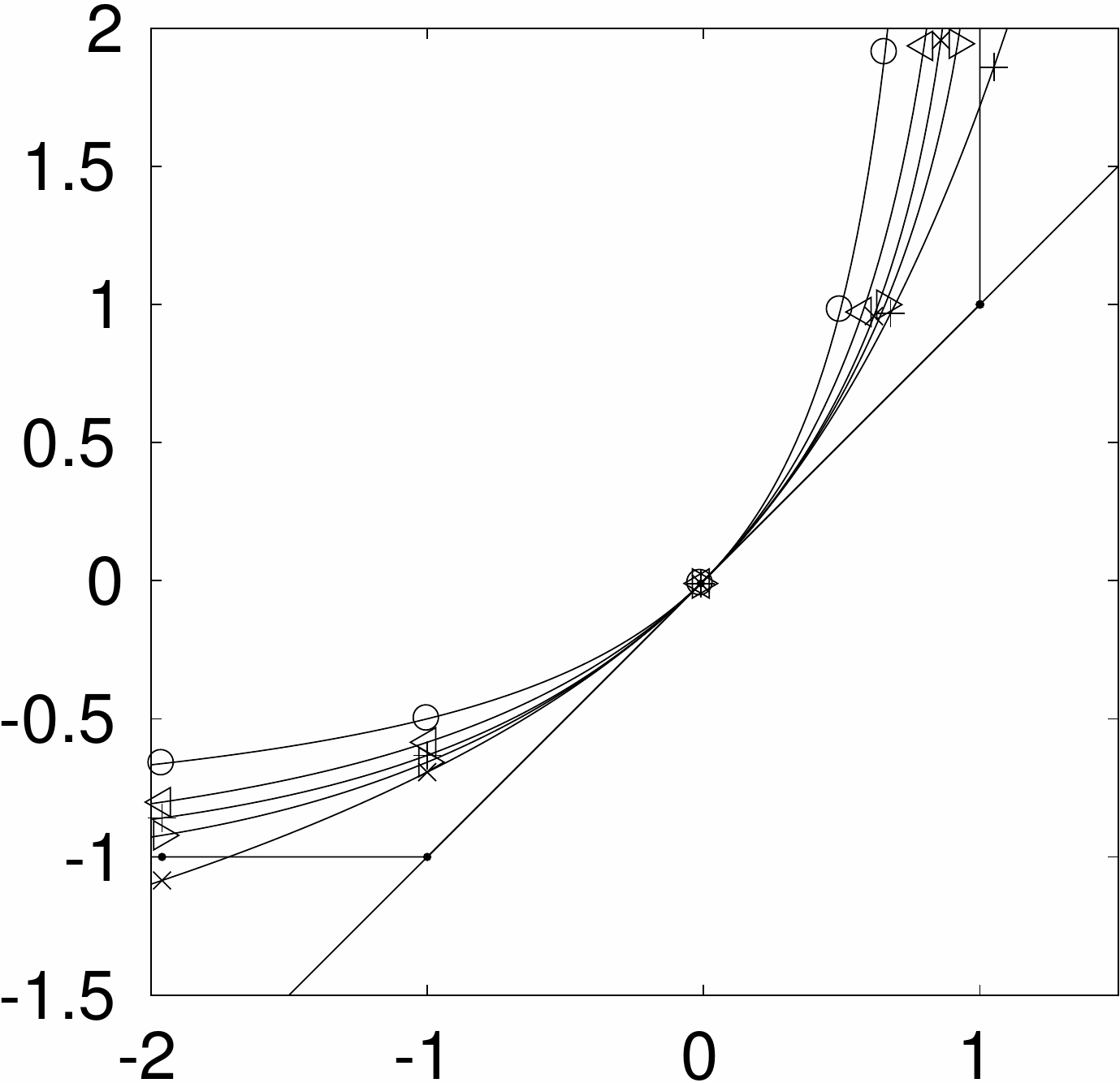}}
	\put(.175,.25){
		\def\arraystretch{.7}\setlength{\tabcolsep}{0pt}
		\begin{tabular}{cl}
		\small$-$&\small$\SimDisc$\\
		\small$\cdot$&\small$\SimTV$\\
		\small$\circ$&\small$\SimFR$\\
		\small$\triangleleft$&\small$\SimJS$\\
		\small$\triangleright$&\small$\SimChi$\\
		\small$\times$&\small$\SimE{0}$\\
		\small$+$&\small$\SimE{1}$
		\end{tabular}}
\end{picture}
\caption{Legendre--Fenchel conjugates $[c(1,\cdot)]^\ast$ of the local discrepancy integrands from Table\,\ref{tab:discrepancies}.
The right graph shows a zoom.}
\label{fig:costConj}
\end{figure}

\subsection{Unbalanced optimal transport}\label{sec:unbalancedExamples}
Of course, the above discrepancies can be combined in Definition\,\ref{def:SandwichProblem} in various possible ways.
We shall have a look at some exemplary special cases.

\paragraph{Combination with total variation.}
Let us first consider cases with $\SimTV$ or a generalization thereof.
For $a,b\in(0,\infty]$ let $\SimTV_{a,b}$ be induced by
\begin{equation*}
	c^{TV}_{a,b}(m_0,m_1)=\begin{cases}
		a|m_1-m_0|&\text{if }m_1\geq m_0 \geq 0,\\
		b|m_1-m_0|&\text{if }m_0 \geq m_1 \geq 0,\\
		+\infty & \text{else,}
		\end{cases}
\end{equation*}
where $a$ is the cost for increasing and $b$ for decreasing the mass by one mass unit.
Note $\SimTV=\SimTV_{1,1}$ and $\SimDisc=\SimTV_{\infty,\infty}$.
By Corollary\,\ref{cor:reductionInfimalConv} we have
\begin{equation*}
\SimTV_{a,b}\diamond W_1\diamond \SimTV_{c,d}\diamond W_1\diamond \SimTV_{e,f}
=\SimTV_{a\wedge c,b\wedge d}\diamond W_1\diamond \SimTV_{e,f}
=\SimTV_{a,b}\diamond W_1\diamond \SimTV_{c\wedge e,d\wedge f}\,,
\end{equation*}
where $\wedge$ indicates the minimum.
Taking $a=b=e=f=\infty$ this in particular implies
\begin{equation*}
W_1\diamond \SimTV_{c,d}\diamond W_1
=\SimTV_{c,d}\diamond W_1
=W_1\diamond \SimTV_{c,d}
=\SimTV_{c,d} \diamond W_1\diamond \SimTV_{c,d}\,,
\end{equation*}
that is, $W_1$ and the generalized total variation metric commute.
This can now be used to imply the fully reduced form
\begin{equation*}
\SimTV_{a,b}\diamond W_1\diamond \SimTV_{c,d}\diamond W_1\diamond \SimTV_{e,f}
=\SimTV_{a\wedge c\wedge e,b\wedge d\wedge f}\diamond W_1
=W_1\diamond \SimTV_{a\wedge c\wedge e,b\wedge d\wedge f}\,.
\end{equation*}
Furthermore, using Corollary \ref{cor:equivalenceStatic} and Proposition\,\ref{prop:modelReduction} it is straightforward to show that a corresponding predual formulation is given by
\begin{equation*}
W_{\SimDisc,\SimTV_{c,d},\SimDisc}(\rho_0,\rho_1)
=\sup_{\substack{\alpha,\beta\in\Lip(\Omega):\\\alpha+\beta\leq0,\,-\alpha\leq c,\,-\beta\leq d}}\int_\Omega\alpha\,\d\rho_0+\int_\Omega\beta\,\d\rho_1
=\sup_{\substack{\alpha\in\Lip(\Omega):\\-c \leq \alpha \leq d}}\int_\Omega\alpha\,\d(\rho_0-\rho_1)\,.
\end{equation*}
According to Proposition \ref{prop:existencePrimal} optimizers exist as long as $c<\infty$ (or $\measnrm{\rho_0}\geq\measnrm{\rho_1}$) and $d<\infty$ (or $\measnrm{\rho_0}\leq\measnrm{\rho_1}$).
Applications of optimal transport with `reservoir bins' to account for superfluous mass \cite{RubnerEMD-IJCV2000,PeleECCV2008}, with truncated cost functions  \cite{Pele2009}, as well as the optimal partial transport problem \cite{Caffarelli-McCann-FreeBoundariesOT-2010} are (or can be interpreted as) combinations of optimal transport with total variation.

\newcommand{\WKR}{W^{\tn{KR}}}
\paragraph{Kantorovich--Rubinstein norm.}
In \cite{LellmannKantorovichRubinstein2014} the authors propose to use a variation of the Kan\-toro\-vich--Rubinstein norm as image discrepancy.
For $\lambda_1,\lambda_2\geq0$ and two given images (in our case measures) $\rho_0,\rho_1$ they define
\begin{equation*}
\WKR(\rho_0,\rho_1)=\lambda_2\sup_{\alpha\in\Lip(\Omega),\,|\alpha|\leq\lambda_1/\lambda_2}\int_\Omega\alpha\,\d(\rho_0-\rho_1)\,.
\end{equation*}
Comparing with the above, we obtain
\begin{align*}
\WKR(\rho_0,\rho_1)/\lambda_2
=\SimTV_{\frac{\lambda_1}{\lambda_2},\frac{\lambda_1}{\lambda_2}} \diamond W_1
=\tfrac{\lambda_1}{\lambda_2}\SimTV\diamond W_1\,.
\end{align*}
which is a special case of the combination with total variation.
Note that the authors provide several primal and dual formulations (however, none of the simple explicit form \eqref{eq:SandwichProblem} or of the form \eqref{eq:GeneralizedPrimalProblem}).

\paragraph{Hellinger--Kantorovich distance.}
Liero et al.\ \cite{LieroMielkeSavare-HellingerKantorovich-2015a} propose a class of optimal entropy-trans\-port problems, for instance
\begin{equation*}
	\min_{\pi\in\measp(\Omega\times\Omega)} \SimE{1}(\rho_0,{\Proj_0}_\sharp \pi) + \SimE{1}(\rho_1,{\Proj_1}_\sharp \pi) + \int_{\Omega\times\Omega}c(x_1,x_2)\,\d\pi(x_1,x_2)\,.
\end{equation*}
This obviously has the form $W(\rho_0,\rho_1)=\SimE{1}\diamond W_c\diamond \SimE{0}$ for $W_c$ the Wasserstein distance with cost $c$
(note that $\SimE{0}$ is the symmetric discrepancy to $\SimE{1}$, obtained by swapping both arguments).
For $c(x,y)=|x-y|^2$ this yields the so called (squared) Gaussian Hellinger--Kantorovich distance, for $c(x,y) = -\log(\cos^2(|x-y|))$ if $|x-y|<\pi/2$ and $+\infty$ else one obtains the (squared) Hellinger--Kantorovich distance or Wasserstein--Fisher--Rao distance, as also introduced in \cite{KMV-OTFisherRao-2015,ChizatDynamicStatic2018}. It can be interpreted as a Riemannian infimal convolution between the Wasserstein-2 distance and the Hellinger/Fisher--Rao distance.

In our setting we choose the cost $c(x,y)=|x-y|$ for computational efficiency, yielding the special case $W_{\SimE{1},\SimDisc,\SimE{0}}$ of our class of functionals.
The corresponding predual formulation reads
\begin{equation*}
W_{\SimE{1},\SimDisc,\SimE{0}}(\rho_0,\rho_1)
	=\sup_{\alpha \in\Lip(\Omega)}
	\int_\Omega (1-\exp(-\alpha))\,\d\rho_0+\int_\Omega (1-\exp(\alpha))\,\d\rho_1\,.
\end{equation*}

\paragraph{Combination with squared Hellinger distance.}
In Section \ref{sec:experiments} we also consider a combination of $W_1$ and the squared Hellinger distance, namely $W_1 \diamond \lambda\,\SimFR \diamond W_1$ for positive weights $\lambda$.
The $\SimFR$ term is well-suited for balancing gradual local fluctuations of intensity, since the corresponding integrand $c(1,\cdot)$ is flat around 1 (cf.~Figure \ref{fig:cost}).
Moreover, when $\SimFR$ is put between two $W_1$ terms, it allows to shrink mass that is transported in the second $W_1$ term as well as to increase mass that is transported in the first $W_1$ term. Thus it can symmetrically account for discrepancies in both directions (similar properties hold for $\SimJS$ and $\SimChi$).

\section{Discretization and Optimization}\label{sec:discretization}

In all examples we shall consider, the corresponding formulation \eqref{eq:GeneralizedPrimalProblem} is well-posed and admits optimizers
(as can easily be checked via Proposition \ref{prop:existencePrimal}).
In fact, nonexistence of optimizers usually only happens if no mass growth or shrinkage is allowed despite $\measnrm{\rho_0}\neq\measnrm{\rho_1}$ for the two input measures
or if one of the measures is zero.
Thus we will base our implementation on formulation \eqref{eq:GeneralizedPrimalProblem}, exploiting the locality of the constraints on $\alpha$ and $\beta$.
In fact, a numerical implementation of \eqref{eq:SandwichProblem} instead would involve solving two $W_1$ problems
which each have complexity similar to \eqref{eq:GeneralizedPrimalProblem} (using the standard formulation \eqref{eqn:W1predual} or Beckmann's formulation from Remark\,\ref{rem:flowFormulation}).
For ease of exposition and with regard to imaging applications we shall here restrict ourselves to the domain $\Omega=[0,1]^2$.
The discretization in higher dimensions or in non-Euclidean spaces can be performed in a similar way, but might require the use of unstructured grids.

\subsection{Variable splitting}\label{sec:variableSplitting}
To achieve a simple proximal point type algorithm, we aim to separate the different constraints on $\alpha$ and $\beta$ from each other.
First note that $\alpha\in\Lip(\Omega)$ is equivalent to $\alpha\in W^{1,\infty}(\mathrm{int}\Omega)$ with $|\nabla\alpha|\leq1$ almost everywhere.
Thus, the indicator function of the set $\Lip(\Omega)$ is given by
\begin{equation*}
\iota_{\Lip(\Omega)}(\alpha)
=\int_\Omega\sup_{\phi\in\R^2}\nabla\alpha\cdot\phi-|\phi|\,\d x
=\sup_{\phi\in L^1(\mathrm{int}\Omega)^2}\int_\Omega\nabla\alpha\cdot\phi-|\phi|\,\d x\,,
\end{equation*}
where the second equality follows by \cite[Thm.\,VII-7]{CaVa77}.
Thus, \eqref{eq:GeneralizedPrimalProblem} can be transformed into the saddle point problem
\begin{multline*}
W_{\h,\g,B}(\rho_0,\rho_1)
=\sup_{\alpha,\beta\in C(\Omega)}\inf_{\phi,\psi\in L^1(\mathrm{int}\Omega)^2}\int_\Omega\h(\alpha)\,\d\rho_0+\int_\Omega\g(\beta)\,\d\rho_1-\iota_{B}(\alpha,\beta)\\
+\int_\Omega|\phi|+|\psi|\,\d x-\int_\Omega\nabla\alpha\cdot\phi+\nabla\beta\cdot\psi\,\d x\,.
\end{multline*}
The vector fields $\phi$ and $\psi$ indicate the material flow and may themselves be of interest for applications, see Remark \ref{rem:flowFormulation}.
Now splitting the variables $\alpha$ and $\beta$ up into two representatives allows to separate the Lipschitz constraint from the set $B$,
\begin{multline}\label{eqn:saddlePointProblem}
W_{\h,\g,B}(\rho_0,\rho_1)
=\sup_{\alpha,\beta,\eta,\theta\in C(\Omega)}\inf_{\substack{\phi,\psi\in L^1(\mathrm{int}\Omega)^2\\\rho_0'',\rho_1''\in\meas(\Omega)}}
\int_\Omega\h(\alpha)\,\d\rho_0+\int_\Omega\g(\beta)\,\d\rho_1-\iota_{B}(\eta,\theta)\\
+\int_\Omega\eta-\alpha\,\d\rho_0''+\int_\Omega\theta-\beta\,\d\rho_1''
+\int_\Omega|\phi|+|\psi|\,\d x-\int_\Omega\nabla\alpha\cdot\phi+\nabla\beta\cdot\psi\,\d x\,.
\end{multline}

\subsection{Discretization}
We choose a pixel-based regular grid of width $\Delta x=\frac1N$.
Discrete quantities will be denoted by a hat.
The functions $\alpha$, $\beta$, $\eta$, and $\theta$ will be represented by their values at the positions $x_{ij}=(i\Delta x,j\Delta x)$, $i,j=1,\ldots,N$,
that is, $$\hat\alpha=(\alpha_{ij})_{i,j=1,\ldots,N}\quad\text{with}\quad\alpha_{ij}=\alpha(x_{ij})$$ and analogously for the other variables.
The gradient operator is usually implemented via forward finite differences,
\begin{equation}\label{eqn:forwardDifference}
(\hat\nabla\hat\alpha)_{ij}=\tfrac1{\Delta x}(\alpha_{i+1,j}-\alpha_{ij},\alpha_{i,j+1}-\alpha_{ij})^T\,.
\end{equation}
To reduce anisotropic effects due to the discretization (forward differences behave differently along both diagonals, see Section\,\ref{sec:flow}), we alternatively also consider a discrete representation as the collection of two gradient approximations,
the first one via forward differences, the second one via a mixture of forward and backward differences.
\begin{equation}\label{eqn:mixedDifference}
(\hat\nabla\hat\alpha)_{ij}=\tfrac1{\Delta x}\left[(\alpha_{i+1,j}-\alpha_{ij},\alpha_{i,j+1}-\alpha_{ij})^T,(\alpha_{i+1,j}-\alpha_{ij},\alpha_{ij}-\alpha_{i,j-1})^T\right]\,.
\end{equation}
Just for comparison purposes we introduce a third variant, where the gradient information is represented as a collection of directional derivatives, for instance with eight directions,
\begin{multline}\label{eqn:directionalDifference}
(\hat\nabla\hat\alpha)_{ij}
=\tfrac1{\Delta x}\big(
\alpha_{i+1,j}-\alpha_{ij},
\tfrac{\alpha_{i+2,j+1}-\alpha_{ij}}{\sqrt5},
\tfrac{\alpha_{i+1,j+1}-\alpha_{ij}}{\sqrt2},
\tfrac{\alpha_{i+1,j+2}-\alpha_{ij}}{\sqrt5},\\
\alpha_{i,j+1}-\alpha_{ij},
\tfrac{\alpha_{i-1,j+2}-\alpha_{ij}}{\sqrt5},
\tfrac{\alpha_{i-1,j+1}-\alpha_{ij}}{\sqrt2},
\tfrac{\alpha_{i-2,j+1}-\alpha_{ij}}{\sqrt5}\big)\,.
\end{multline}
Note that here we used a horizontal vector so that in all discretization cases the (directional) gradients are contained in the columns of $(\hat\nabla\hat\alpha)_{ij}$.
The discretization \eqref{eqn:directionalDifference} corresponds to a network flow problem where the plane is approximated by a discrete grid graph in which each vertex is connected to its 8-neighbourhood (see e.\,g.~\cite{TreeEMD2007}).
The vector fields $\phi$ and $\psi$ are discretely represented as the duals to the discrete gradient vectors, that is,
$\hat\phi=(\phi_{ij})_{i,j=1,\ldots,N}$ and $\hat\psi=(\psi_{ij})_{i,j=1,\ldots,N}$, where $\phi_{ij}$ and $\psi_{ij}$ have the same shape and dimension as $(\hat\nabla\hat\alpha)_{ij}$.
Finally, the measures $\rho_0$, $\rho_1$, $\rho_0''$, and $\rho_1''$ shall be discretized as a sum of Dirac measures, that is,
$$\rho_k=\sum_{i,j=1}^N(\rho_k)_{ij}\delta_{x_{ij}}\,,
\qquad
\hat\rho_k=((\rho_k)_{ij})_{i,j=1,\ldots,N}\,,$$
and analogously for the other measures.
The discrete version of saddle point problem \eqref{eqn:saddlePointProblem} is then given as
\begin{equation}\label{eqn:saddlePointDiscrete}
\max_{\hat\alpha,\hat\beta,\hat\eta,\hat\theta}\min_{\hat\phi,\hat\psi,\hat\rho_0'',\hat\rho_1''}
\left\langle\hat\eta-\hat\alpha,\hat\rho_0''\right\rangle
+\left\langle\hat\theta-\hat\beta,\hat\rho_1''\right\rangle
-\left\langle\hat\nabla\hat\alpha,\hat\phi\right\rangle
-\left\langle\hat\nabla\hat\beta,\hat\psi\right\rangle
-F^{\hat\rho_1}[\hat\alpha,\hat\beta,\hat\eta,\hat\theta]
+G[\hat\phi,\hat\psi,\hat\rho_0'',\hat\rho_1'']
\end{equation}
\begin{align*}
\text{for}\quad
F^{\hat\rho_1}[\hat\alpha,\hat\beta,\hat\eta,\hat\theta]
&=\textstyle-\Delta x^2\sum_{i,j=1}^N\left(\h(\alpha_{ij})(\rho_0)_{ij}+\g(\beta_{ij})(\rho_1)_{ij}-\iota_B(\eta_{ij},\theta_{ij})\right)\,,\\
G[\hat\phi,\hat\psi,\hat\rho_0'',\hat\rho_1'']
&=\textstyle\Delta x^2\sum_{i,j=1}^N\sum_{k=1}^M\left(|\phi_{ij}^k|+|\psi_{ij}^k|\right)\,,\\
\left\langle\hat a,\hat b\right\rangle
&=\textstyle\Delta x^2\sum_{i,j=1}^Nb_{ij}\cdot a_{ij}\,,
\end{align*}
where $\phi_{ij}^k$ and $\psi_{ij}^k$ for $k=1,\ldots,M$ are the columns of $\phi_{ij}$ and $\psi_{ij}$, respectively.

\subsection{Proximal operators}
To apply a standard primal-dual method we require the corresponding proximal operators.
In detail, for a convex function $f$ and a stepsize $\tau>0$ we set $\prox_{\tau f}(x)=\argmin_{\tilde x}\frac12\|\tilde x-x\|_2^2+\tau f(\tilde x)$,
where in the discrete vector-valued case $\|\hat x\|_2^2=\langle\hat x,\hat x\rangle$.

The proximal operator of $G$ is straightforward to compute,
\begin{multline*}
\big(\prox_{\tau G}(\hat\phi,\hat\psi,\hat\rho_0'',\hat\rho_1'')\big)_{ij}\\
=\big([s_\tau(|\phi_{ij}^1|)\phi_{ij}^1,\ldots,s_\tau(|\phi_{ij}^M|)\phi_{ij}^M],[s_\tau(|\psi_{ij}^1|)\psi_{ij}^1,\ldots,s_\tau(|\psi_{ij}^M|)\psi_{ij}^M],(\rho_0'')_{ij},(\rho_1'')_{ij}\big)\\
\text{with }s_\tau(t)=\max(1-\tfrac\tau t,0)\,.
\end{multline*}

The proximal map for $F^{\hat\rho_1}$ can in general not be given by an explicit formula, hence we will usually solve it via a Newton iteration.
Note that the proximal map can be computed separately for each component,
\begin{equation*}
\big(\prox_{\sigma F^{\hat\rho_1}}(\hat\alpha,\hat\beta,\hat\eta,\hat\theta)\big)_{ij}
=\big(\prox_{(\sigma\Delta x^2(\rho_0)_{ij})(-\h)}(\alpha_{ij}),\prox_{(\sigma\Delta x^2(\rho_1)_{ij})(-\g)}(\beta_{ij}),\prox_{\sigma\Delta x^2\iota_B}(\eta_{ij},\theta_{ij})\big)\,.
\end{equation*}
To find $\prox_{\sigma(-\h)}(\tilde\alpha)$ (and analogously for $\g$) we minimize $f(\alpha):=(\alpha-\tilde\alpha)^2-2\sigma\h(\alpha)$ for $\alpha$ via Newton's method, that is, we iterate
\begin{equation*}
\alpha_{k+1}=\alpha_k-\frac{f'(\alpha_k)}{f''(\alpha_k)}=\alpha_k-\frac{\alpha_k-\tilde\alpha-\sigma\h'(\alpha_k)}{1-\sigma\h''(\alpha_k)}\,,\qquad \alpha_0=\tilde\alpha\,.
\end{equation*}

\begin{lemma}
Let $c$ be the integrand of any local discrepancy from Table\,\ref{tab:discrepancies} except $\SimTV$, $\SimTV_{a,b}$, or $\SimEp$ with $p\notin[0,1]$, and let $\h(\alpha)=-[c(1,\cdot)]^\ast(-\alpha)$.
Given $\tilde\alpha\in\R$ with $\h(\tilde\alpha)>-\infty$, the function $f$ has a unique minimizer $\alpha^*\geq\tilde\alpha$,
and $\alpha_k\to \alpha^*$ monotonically.
\end{lemma}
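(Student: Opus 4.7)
My plan is to deduce the claim from the textbook monotone-convergence result for Newton's method applied to a strictly increasing, concave root equation, after first handling the static part and verifying the key sign condition $h'''\geq 0$ for each of the discrepancies appearing in Table~\ref{tab:discrepancies}.

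First I would settle existence, uniqueness and the ordering of $\alpha^*$. For each listed $c$ one reads off from the table that $h=-[c(1,\cdot)]^\ast(-\cdot)$ is $C^\infty$ on the interior of $\dom h$, so that $f''(\alpha)=2-2\sigma h''(\alpha)\geq 2>0$ by concavity of $h$ (admissibility condition~\ref{enm:convexity}). Together with upper semi-continuity and the obvious coercivity, $f$ attains a unique minimizer $\alpha^*$ in the closure of $\dom h$. Monotonicity of $h$ (condition~\ref{enm:negativeMeasures}) gives $f'(\tilde\alpha)=-2\sigma h'(\tilde\alpha)\leq 0$, and strict convexity of $f$ then forces $\alpha^*\geq\tilde\alpha$. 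Because $\tilde\alpha\in\dom h$ and the sublevel set $\{f\leq f(\tilde\alpha)\}$ meets $\dom h$, the minimizer lies in the interior of $\dom h$ unless $h$ is affine near $\alpha^*$, in which case Newton terminates in one step.

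The decisive structural property is concavity of $f'$, equivalently $h'''\geq 0$ on the interior of $\dom h$. I would verify this case by case: for $\SimDisc$, $h=\id$ so $h'''=0$; for $\SimFR$, $h(\alpha)=\alpha/(1+\alpha)$ on $(-1,\infty)$ gives $h'''(\alpha)=6/(1+\alpha)^4>0$; for $\SimE{0}$ one has $h(\alpha)=\log(1+\alpha)$ with $h'''=2/(1+\alpha)^3>0$; for $\SimE{1}$, $h(\alpha)=1-e^{-\alpha}$ with $h'''=e^{-\alpha}>0$; for $\SimChi$, $h(\alpha)=-4-\alpha+4\sqrt{1+\alpha}$ on $(-1,\infty)$ with $h'''=\tfrac32(1+\alpha)^{-5/2}>0$; for $\SimJS$, setting $v=2^{-\alpha}$ a direct computation yields $h'''=2(\ln 2)^2 v(2+v)/(2-v)^3>0$ on $(-1,\infty)$; and for $\SimEp$ with $p\in[0,1]$ the explicit power-law form of $h$ reduces $h'''\geq 0$ to a routine algebraic check. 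The precisely excluded discrepancies in the hypothesis (the TV variants and $\SimEp$ with $p\notin[0,1]$) are exactly those where either smoothness or the sign of $h'''$ fails.

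With these preparations the convergence argument is standard. Writing $g:=f'$, the iteration is $\alpha_{k+1}=\alpha_k-g(\alpha_k)/g'(\alpha_k)$ applied to a function $g$ that is strictly increasing (since $f''>0$), concave (since $g''=-2\sigma h'''\leq 0$) and satisfies $g(\alpha_0)=g(\tilde\alpha)\leq 0$. I would show by induction on $k$ that $\alpha_k\leq\alpha_{k+1}\leq\alpha^*$: the lower bound is immediate from $g(\alpha_k)\leq g(\alpha^*)=0$ and $g'(\alpha_k)>0$; the upper bound uses concavity of $g$ in the form $0=g(\alpha^*)\leq g(\alpha_k)+g'(\alpha_k)(\alpha^*-\alpha_k)$, which rearranges to $\alpha^*-\alpha_k\geq -g(\alpha_k)/g'(\alpha_k)=\alpha_{k+1}-\alpha_k$. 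The bounded monotone sequence then converges to some $\alpha_\infty\in[\tilde\alpha,\alpha^*]$, and passing to the limit in the iteration identity forces $g(\alpha_\infty)=0$, whence $\alpha_\infty=\alpha^*$ by uniqueness of the root. The main obstacle is the case-by-case verification of $h'''\geq 0$; everything else is standard.
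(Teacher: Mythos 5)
Your proposal is correct and follows essentially the same route as the paper: uniqueness and $\alpha^*\geq\tilde\alpha$ from strict convexity, coercivity and monotonicity of $\h$, and monotone Newton convergence from concavity of $f'$ (equivalently convexity of $\h'$, i.e.\ $\h'''\geq0$) on $[\alpha_0,\alpha^*]$, which the paper asserts and you verify case by case. The only cosmetic imprecision is the $\SimChi$ case, where your formula for $\h$ is valid only on $(-1,3]$ (for $\alpha>3$ one has $\h\equiv1$), but this is harmless since the Newton iterates stay in $[\tilde\alpha,\alpha^*]\subset(-1,3]$ unless the iteration is trivially stationary.
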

\begin{proof}
The uniqueness of the minimizer follows from the strict convexity and coercivity of $f$.
Furthermore, $\alpha^*\geq\tilde\alpha$ since $\h$ is increasing.
Finally, in all considered cases, $\h'$ turns out to be convex below $\alpha^*$,
thus $f'$ is concave on $[\tilde\alpha,\alpha^*]=[\alpha_0,\alpha^*]$ and the Newton iteration converges monotonically.
\end{proof}

\noindent For the $\h$ deriving from $\SimTV_{a,b}$ we can instead find the explicit formula
\begin{equation*}
\prox_{\sigma(-\h)}(\tilde\alpha)=\min(\max(-b,\tilde\alpha+\sigma),\max(a,\tilde\alpha))\,.
\end{equation*}
Finally, as for $\prox_{\sigma\iota_B}$, we exploit that $B=\{(\alpha,\beta)\in\R^2\,|\,\beta\leq q(\alpha)\}$ is the hypograph of a concave, monotonically decreasing function $q=\gB(-\cdot)$ with $q(0)=0$ so that
\begin{equation*}
\prox_{\sigma\iota_B}(\tilde\alpha,\tilde\beta)=\begin{cases}
(\tilde\alpha,\tilde\beta)&\text{if }\tilde\beta\leq q(\tilde\alpha),\\
(\alpha^*,q(\alpha^*))&\text{else, with }
\alpha^*=\argmin_{\alpha}f(\alpha)\;\text{for }f(\alpha)=\tfrac12(\alpha-\tilde\alpha)^2+\tfrac12(q(\alpha)-\tilde\beta)^2.
\end{cases}
\end{equation*}
Here again, we perform a Newton iteration to find the minimizer $\alpha^*$ of $f$.
To this end we set
\begin{equation*}
\alpha_{k+1}=\alpha_k-\frac{f'(\alpha_k)}{f''(\alpha_k)}=\alpha_k-\frac{\alpha_k-\tilde\alpha+(q(\alpha_k)-\tilde\beta)q'(\alpha_k)}{1+(q'(\alpha_k))^2+(q(\alpha_k)-\tilde\beta)q''(\alpha_k)}\,,
\qquad
\alpha_0=\tilde\alpha\,.
\end{equation*}

\begin{lemma}
	\label{lem:NewtonConv2}
	Let $c$ be the integrand of any local discrepancy from Table\,\ref{tab:discrepancies} except $\SimTV$, $\SimTV_{a,b}$, or $\SimEp$ with $p\notin[0,1]$,
	and let $q(\alpha)=\gB(-\alpha)=-[c(\cdot,1)]^\ast(\alpha)$.
	Given $\tilde\alpha,\tilde\beta\in\R$ with $\tilde\beta>q(\tilde\alpha)>-\infty$, the function $f$ has a unique minimizer $\alpha^*\leq\tilde\alpha$,
	and $\alpha_k\to \alpha^*$ monotonically.
\end{lemma}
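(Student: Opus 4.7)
The plan is to mimic the strategy used for the preceding lemma. Since $\gB$ is admissible (Definition~\ref{def:admissibility}), $q = \gB(-\cdot)$ is concave and monotonically decreasing with $q(0)=0$, so $q' \leq 0$ and $q'' \leq 0$ on the interior of its domain.

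For the minimizer, I would compute
\begin{align*}
f'(\alpha) & = (\alpha - \tilde\alpha) + (q(\alpha) - \tilde\beta)\,q'(\alpha), \\
f''(\alpha) & = 1 + q'(\alpha)^2 + (q(\alpha) - \tilde\beta)\,q''(\alpha).
\end{align*}
Where $q(\alpha)\leq\tilde\beta$, both $(q-\tilde\beta)\leq 0$ and $q''\leq 0$, so $f''>0$; together with coercivity from the first summand this gives a unique minimizer of $f$ on this set. Where $q(\alpha)>\tilde\beta$, monotonicity of $q$ forces $\alpha<\tilde\alpha$, and $(q-\tilde\beta)\,q' \leq 0$ yields $f'(\alpha)<0$, so $f$ strictly decreases there and any minimizer must lie in the first set. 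Since $f'(\tilde\alpha) = (q(\tilde\alpha)-\tilde\beta)\,q'(\tilde\alpha) \geq 0$, the unique global minimizer $\alpha^*$ satisfies $\alpha^*\leq\tilde\alpha$.

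For monotonic Newton convergence starting from $\alpha_0 = \tilde\alpha$, I would rely on the standard tangent/secant comparison: if $f''>0$ and $f'$ is convex on $[\alpha^*, \tilde\alpha]$, then at each step the tangent to $f'$ at $\alpha_k$ lies below $f'$ on $[\alpha^*, \alpha_k]$, so its root $\alpha_{k+1}$ falls in $[\alpha^*, \alpha_k]$. Combined with $f'(\alpha_k)\geq 0$ and $f''(\alpha_k)>0$, this produces a decreasing sequence converging to $\alpha^*$.

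The remaining and main technical obstacle is verifying that $f'$ is convex on $[\alpha^*, \tilde\alpha]$. Direct differentiation gives
\[
f'''(\alpha) = 3\,q'(\alpha)\,q''(\alpha) + (q(\alpha) - \tilde\beta)\,q'''(\alpha).
\]
The first summand is non-negative since $q',q''\leq 0$, and the second is non-negative on $[\alpha^*, \tilde\alpha]$ precisely when $q'''(\alpha)\leq 0$ there. This latter sign condition plays exactly the role of the ``$\h'$ convex below $\alpha^*$'' assumption used in the previous lemma, and I would verify it by case inspection of the admitted discrepancies from Table~\ref{tab:discrepancies}: $\SimDisc$ is trivial; for $\SimFR$, $\SimJS$, $\SimChi$, $\SimE{0}$, $\SimE{1}$, and $\SimEp$ with $p\in(0,1)$, the explicit formulas for $[c(\cdot,1)]^*$ (obtained directly from the table or via the symmetry $[c^{E,p}(\cdot,1)]^* = [c^{E,1-p}(1,\cdot)]^*$ noted there) allow a direct computation of $q'''$ whose non-positivity on the relevant range follows by elementary estimates. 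The excluded cases $\SimTV$ and $\SimTV_{a,b}$ yield a piecewise linear $q$ for which Newton's method is replaced by an explicit proximal formula anyway, while for $\SimEp$ with $p\notin[0,1]$ the sign check on $q'''$ fails.
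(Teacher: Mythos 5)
Your proposal is correct and follows essentially the same route as the paper: the key step in both is showing $f'''(\alpha)=3q'(\alpha)q''(\alpha)+(q(\alpha)-\tilde\beta)q'''(\alpha)\geq0$ on $[\alpha^*,\tilde\alpha]$ (using $q',q''\leq0$, $q\leq\tilde\beta$ there, and $q'''\leq0$ checked case by case), which makes $f'$ convex and yields monotone Newton convergence from $\alpha_0=\tilde\alpha$. The only cosmetic difference is that the paper gets uniqueness of $\alpha^*$ by viewing $f$ as the squared distance to the closed convex set $B$, whereas you argue it directly from the signs of $f'$ and $f''$; both are fine.
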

\begin{proof}
The uniqueness of the minimizer follows from the convexity and closedness of the set $B$.
Furthermore, $\alpha^*\leq\tilde\alpha$ since $q$ is decreasing and $\tilde\beta>q(\tilde\alpha)$.
Finally, in all considered cases, $f'$ or equivalently $(\tilde\beta-q)q'$ is concave on $[\alpha^*,\tilde\alpha]=[\alpha^*,\alpha_0]$
so that the Newton iteration converges monotonically.
Indeed, in those cases $f'''(\alpha)=3q'(\alpha)q''(\alpha)+(q(\alpha)-\tilde\beta)q'''(\alpha)\geq0$, since all factors are nonpositive for $\alpha\geq\alpha^*$.
\end{proof}

\noindent For the $q$ deriving from $\SimTV_{a,b}$ we can instead find the explicit formula
\begin{equation*}
\prox_{\sigma\iota_B}(\tilde\alpha,\tilde\beta)=\begin{cases}
(\tilde\alpha,\tilde\beta)&\text{if }\tilde\alpha\leq b,\tilde\beta\leq a,\tilde\alpha+\tilde\beta\leq0,\\
(\tilde\alpha,1)&\text{else if }\tilde\alpha<-a,\\
(1,\tilde\beta)&\text{else if }\tilde\beta<-b,\\
\max(-a,\min(b,\frac{\tilde\alpha-\tilde\beta}2))\cdot(1,-1)&\text{else.}
\end{cases}
\end{equation*}
Note that the initial iterate $\alpha_0$ of Newton's method can be improved in various ways;
in our simulations we actually use $\alpha_0=\min\{0,\tilde\alpha\}$ whenever $\tilde\beta\geq\tilde\alpha$
(otherwise we simply swap the roles of $\alpha$ and $\beta$ so that we iterate in $\beta$ instead).
Lemma \ref{lem:NewtonConv2} stays valid with this choice.

\subsection{Primal-dual iteration}
A standard primal-dual ascent-descent method can now be applied to solve \eqref{eqn:saddlePointDiscrete},
for instance the algorithm by Chambolle and Pock \cite{ChPo11},
\begin{align*}
(\hat\alpha,\hat\beta,\hat\eta,\hat\theta)^{n+1}&=\prox_{\sigma F^{\hat\rho_1}}((\hat\alpha,\hat\beta,\hat\eta,\hat\theta)^n+\sigma K(\hat\phi,\hat\psi,\hat\rho_0'',\hat\rho_1'')^{n+\frac12})\\
(\hat\phi,\hat\psi,\hat\rho_0'',\hat\rho_1'')^{n+1}&=\prox_{\tau G}((\hat\phi,\hat\psi,\hat\rho_0'',\hat\rho_1'')^n-\tau K^T(\hat\alpha,\hat\beta,\hat\eta,\hat\theta)^{n+1})\\
(\hat\phi,\hat\psi,\hat\rho_0'',\hat\rho_1'')^{n+\frac32}&=(\hat\phi,\hat\psi,\hat\rho_0'',\hat\rho_1'')^{n+1}+\vartheta((\hat\phi,\hat\psi,\hat\rho_0'',\hat\rho_1'')^{n+1}-(\hat\phi,\hat\psi,\hat\rho_0'',\hat\rho_1'')^{n})
\end{align*}
with overrelaxation parameter $\vartheta=1$, time steps $\sigma,\tau>0$ satisfying $\sigma\tau\|K\|^2<1$, and linear operator
\begin{equation*}
K=\left(\begin{array}{cccc}-\hat\nabla^T&&-I\\&-\hat\nabla^T&&-I\\&&I\\&&&I\end{array}\right)\,.
\end{equation*}

\subsection{Including further terms}
The discrepancy $W_{\Dl,\Dm,\Dr}$ can also be used as a fidelity term in inverse or image processing problems.
Here, typically $\rho_0$ represents a given measure, and we aim to solve a minimization problem of the form
\begin{equation*}
\min_{\rho\in\measp(\Omega)}W_{\Dl,\Dm,\Dr}(\rho_0,\rho)+H(\rho)\,,
\end{equation*}
for instance with $H$ being the total variation seminorm on images.
This will lead to saddle point problems of the form
\begin{multline}
	\label{eqn:FullDiscreteProblem}
\max_{\hat\alpha,\hat\beta,\hat\eta,\hat\theta,\hat\zeta}\min_{\hat\phi,\hat\psi,\hat\rho_0'',\hat\rho_1'',\hat\rho,\hat\xi}
\left\langle\hat\eta-\hat\alpha,\hat\rho_0''\right\rangle
+\left\langle\hat\theta-\hat\beta,\hat\rho_1''\right\rangle
-\left\langle\hat\nabla\hat\alpha,\hat\phi\right\rangle
-\left\langle\hat\nabla\hat\beta,\hat\psi\right\rangle
-F^{\hat\rho}[\hat\alpha,\hat\beta,\hat\eta,\hat\theta]
+G[\hat\phi,\hat\psi,\hat\rho_0'',\hat\rho_1'']\\
+\left\langle\hat\zeta,K_2\hat\rho+K_3\hat\xi\right\rangle
-F_2[\hat\zeta]
+G_2[\hat\rho,\hat\xi]
\end{multline}
(details on $F_2$, $G_2$, $K_2$, and $K_3$ will be given in the next section).
Note that this formulation contains a term $F^{\hat\rho}$ in which the primal and dual variables $\hat\rho$ and $\hat\beta$ are coupled in a nonlinear manner
(if desired, this can be avoided by replacing $\g(\beta)=\inf_mm\beta+\Cr(m,1)$, thereby introducing another variable $m$).
In that case, the primal-dual iteration can simply be complemented by the additional steps
\begin{align*}
\hat\zeta^{n+1}&=\prox_{\sigma F_2}(\hat\zeta^n+\sigma(K_2\hat\rho^{n+\frac12}+K_3\hat\xi^{n+\frac12}))\\
(\hat\rho,\hat\xi)^{n+1}&=\prox_{\tau G_2}((\hat\rho,\hat\xi)^n-\tau(K_2^T\hat\zeta^{n+1}+\g(\hat\beta^{n+1}),K_3^T\hat\zeta^{n+1}))\\
(\hat\rho,\hat\xi)^{n+\frac32}&=(\hat\rho,\hat\xi)^{n+1}+\vartheta((\hat\rho,\hat\xi)^{n+1}-(\hat\rho,\hat\xi)^{n})\,.
\end{align*}

\section{Numerical Experiments}\label{sec:experiments}
In this section we provide some numerical examples, comparing different extensions of $W_1$.

\subsection{Discretization}
Figure\ \ref{fig:flowDiscretization} shows results for different discretizations of the gradient in a simple test example.
Here, $\rho_0$ consists of Dirac masses equally distributed on a circle, while $\rho_1$ has all mass concentrated at the centre.
The exact optimal flow is a line measure connecting all Dirac masses to the centre, however, such line measures are difficult to represent on the discrete level.
The employed discretization of the gradient determines how concentrated the flow can be represented.
One can clearly see that simple forward differences\,\eqref{eqn:forwardDifference} produce rather diffuse flows along the south-east diagonal, which is remedied by the mixed differences version\,\eqref{eqn:mixedDifference}.
The discretization\,\eqref{eqn:directionalDifference} via directional derivatives only has eight possible flow directions at its disposal and results in perfectly concentrated flows along those directions, however, it exhibits stronger diffusion in other directions.
Using $W_{\h,\g,B}=W_{\SimDisc,a\SimTV,\SimDisc}$, the mass will be changed rather than transported as soon as the transport distance exceeds $2a$.
However, the numerical flow diffusion induces slightly increased effective transport distances so that the mass change already occurs earlier.
For all three discretizations, the transition from mass transport to mass change happens within the range $a\in[17.5\Delta x,18.5\Delta x]$ (the circle radius is roughly $36$ pixels),
so the effective transport distance is accurate up to the size of a pixel.

Here and in the following, we compute $\phi,\psi:\Omega\to\R^2$ by solving \eqref{eqn:saddlePointProblem} and then use $\psi-\phi$ to visualize the material flow (cf.\ Remark \ref{rem:flowFormulation} and Section~\ref{sec:variableSplitting}).
Moreover, all further weights are given in units of $\Delta x$, or equivalently $\Delta x=1$.

\begin{figure}
\centering
\setlength{\unitlength}{.22\linewidth}%
\setlength{\tabcolsep}{2pt}%
\begin{tabular}{cccc}
\includegraphics[width=\unitlength]{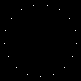}\ \ \ \ &
\includegraphics[width=\unitlength]{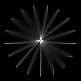}&
\includegraphics[width=\unitlength]{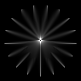}&
\includegraphics[width=\unitlength]{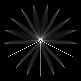}\\
\includegraphics[width=\unitlength]{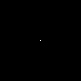}\ \ \ \ &
\includegraphics[width=\unitlength]{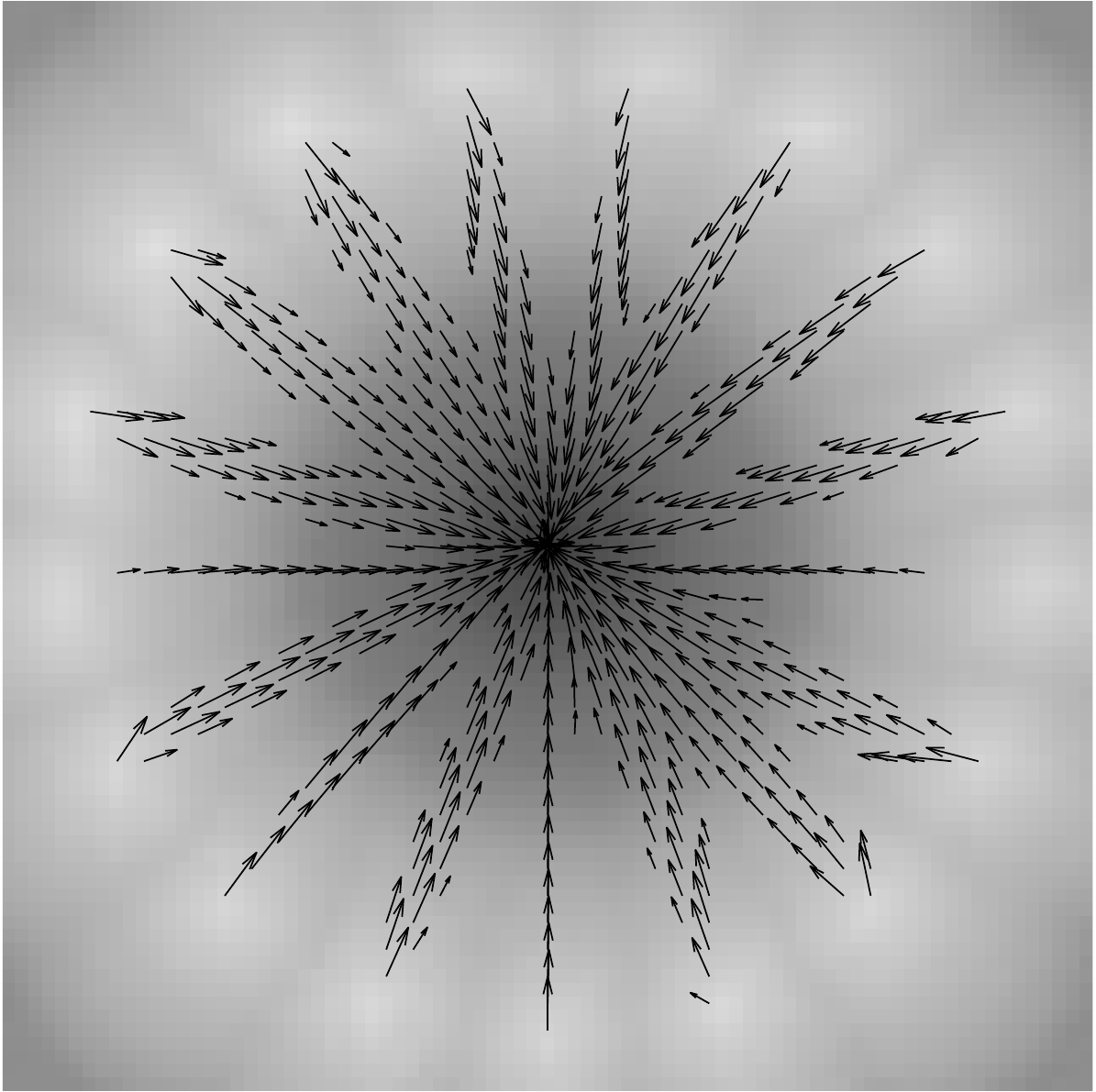}&
\includegraphics[width=\unitlength]{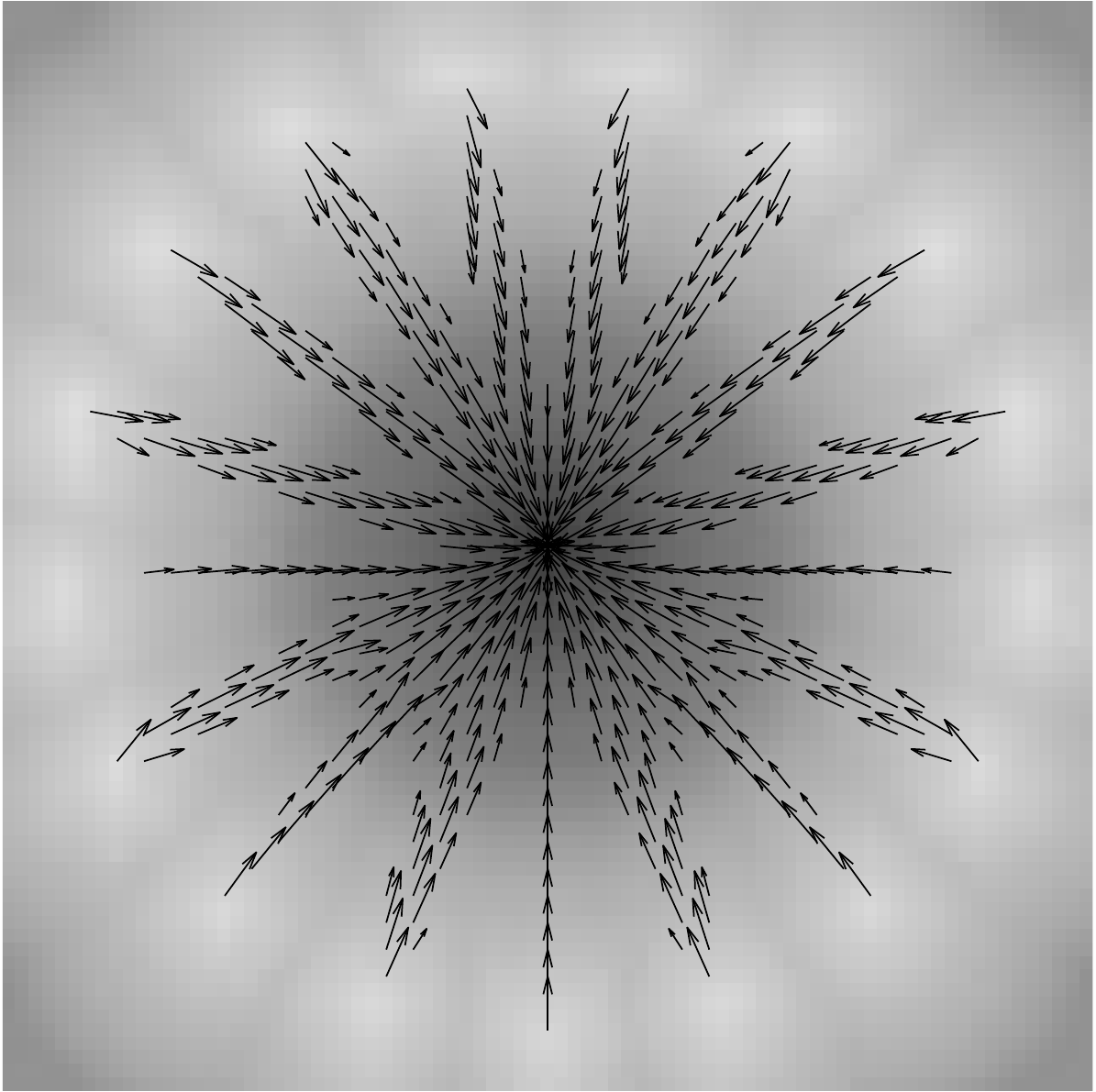}&
\includegraphics[width=\unitlength]{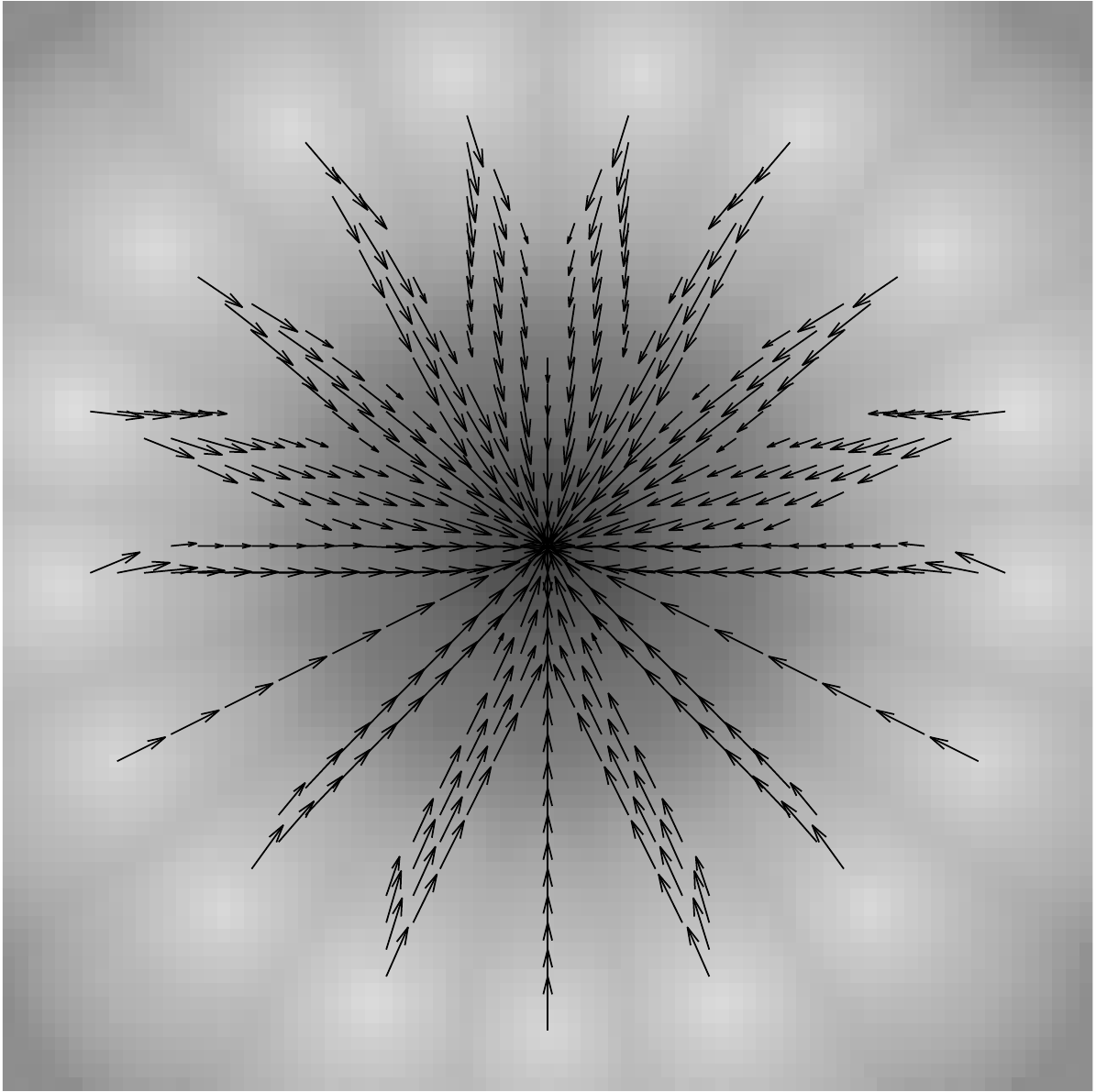}\\
$\rho_0$ \& $\rho_1$\ \ &\eqref{eqn:forwardDifference}&\eqref{eqn:mixedDifference}&\eqref{eqn:directionalDifference}
\end{tabular}
\caption{Computed mass flows $\psi-\phi$ from solving \eqref{eqn:saddlePointProblem}, using different gradient discretizations and $W_{\h,\g,B}=W_{\SimDisc,a\SimTV,\SimDisc}$ with $a$ slightly larger than the circle radius.
The top row shows the flow magnitude, the bottom row the corresponding vector field overlaid over $\alpha$.}
\label{fig:flowDiscretization}
\end{figure}

\subsection{Robustness of unbalanced transport and material flow from videos}\label{sec:flow}

\begin{figure}
	\centering
	\includegraphics{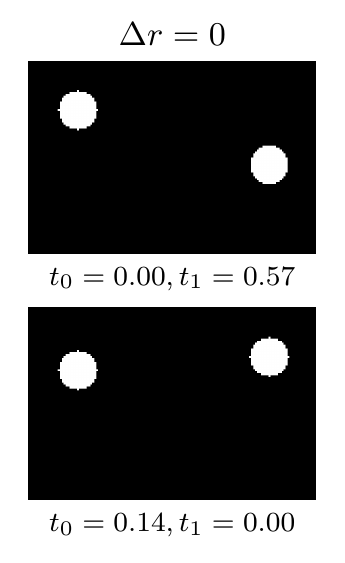}
	\includegraphics{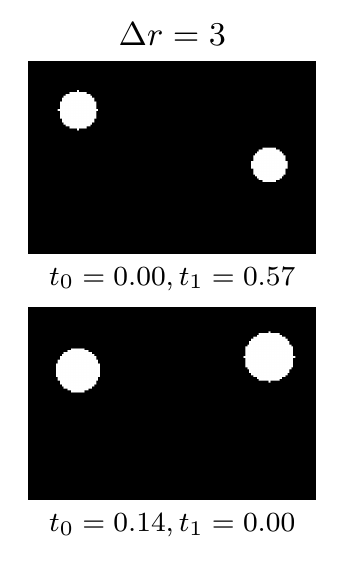}
	\includegraphics{fig_FlowCircles_Flows.pdf}
	\caption{Robust flow reconstruction with unbalanced transport. \textit{First and second column:} two samples from the image families $I_{\Delta r}$. For $\Delta r=0$ all radii are identical, for $\Delta r =3$ the radii change with position and the disks have different sizes. %
	\textit{Third column:} mass flows for transporting $I_{\Delta r=3}(0,0.57)$ onto $I_{\Delta r=3}(0.14,0)$ with $W_1$ (between normalized images) and $W_{\SimDisc,20 \SimFR,\SimDisc}$ (combined flow for both $W_1$ terms is shown). %
	\textit{Fourth column, bottom:} Mass change in the $\SimFR$ term (bright means increase) with two characteristic effects: mass is generated at the larger disks to compensate for the difference, and mass that is to be transported very far, is reduced before transport.%
	}
	\label{fig:FlowCircles}
\end{figure}

Analysing fluorescence microscopy videos to track the spatio-temporal distribution of a certain fluorescent molecule is a common application in biology.
The overall intensity in such videos typically changes over time due to various effects such as photobleaching, changing focus, or material in- and out-flow.
Therefore, standard optimal transport, applied to normalized images, may reconstruct faulty flows and notions of similarity. This can be remedied with unbalanced transport models.

First, we illustrate the advantage of unbalanced models in a synthetic example.
For $[t_0,t_1] \in [0,1]^2$ we create a two parameter family of images $I_0(t_0,t_1)$. $I_0(t_0,t_1)$ is an image of two white disks with radius $r=10$ on black background, one on the left and one on the right side of the image. The parameters $t_0$, $t_1$ control the vertical positions of the two discs.
More generally, we introduce families $I_{\Delta r}$ where the radii of the disks oscillate around $r=10$ by $\Delta r$ while moving. This is a toy model for local fluctuations of intensity.
Some sample images for $\Delta r = 0$ and $\Delta r =  3$ with corresponding flows for $W_1$ and the unbalanced extension $W_{\SimDisc,20 \SimFR,\SimDisc}$ are shown in Figure \ref{fig:FlowCircles}.
For $\Delta r>0$, to compensate for the mass asymmetry between left and right disks, $W_1$ must send mass from left to right, leading to an `unnatural' flow. The unbalanced model can compensate for the difference with the $\SimFR$ term and robustly extracts a flow that represents the movement of the disks.

\newcommand{\TSample}{T_{\tn{Sample}}}

\begin{figure}
	\centering
	\includegraphics[width=0.45\textwidth]{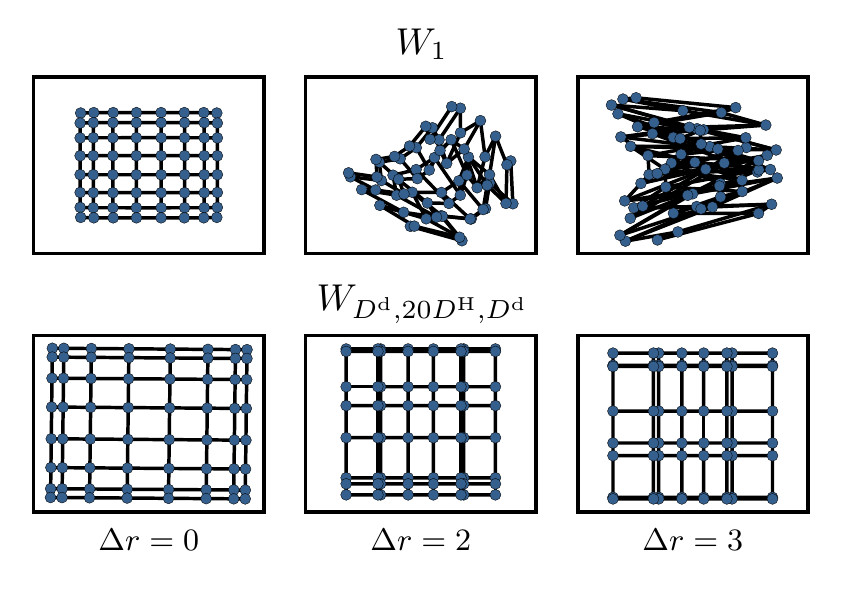}
	\includegraphics[width=0.45\textwidth]{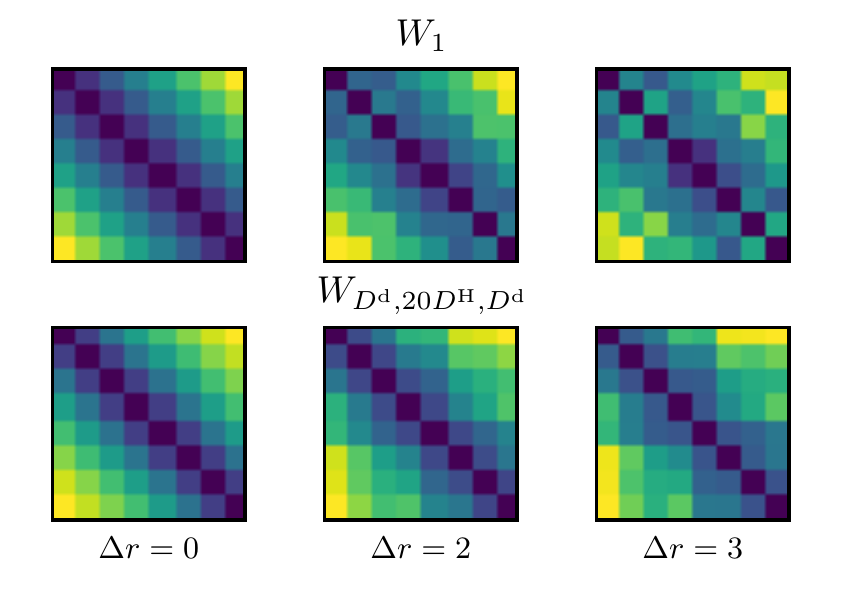}
	\caption{ %
	\textit{Left:} Two-dimensional embedding of images $I_{\Delta r}(t_0,t_1)$ for $(t_0,t_1) \in \TSample^2$ via the graph Laplacian \cite{BelkinNiyogi03} based on the $W_1$ metric matrix and the $W_{\SimDisc,20 \SimFR,\SimDisc}$ discrepancy matrix. Lines represent the Cartesian grid structure of $\TSample^2$. %
	\textit{Right:} Visualization of metric matrices for $t_0=t_1 \in \TSample$, with standard $W_1$ (after normalization to unit mass) and with $W_{\SimDisc, 20 \SimFR, \SimDisc}$.%
	}
	\label{fig:MetricEmbedding}
\end{figure}

Further, let $\TSample=\{0, 0.14, 0.29, 0.43, 0.57, 0.71, 0.86, 1\}$ be eight equidistant samples from $[0,1]$. For the sample images $I_{\Delta r}(t_0,t_1)$ with $(t_0,t_1) \in \TSample^2$ we compute the metric matrix for $W_1$ and the discrepancy matrix with respect to $W_{\SimDisc,20 \SimFR,\SimDisc}$.
We then use the dimensionality reduction scheme of \cite{BelkinNiyogi03} to embed the samples into $\R^2$ (Figure \ref{fig:MetricEmbedding}, left). For $\Delta r = 0$ both discrepancy measures correctly extract the 2-d grid structure of $(t_0,t_1) \in \TSample^2$. As $\Delta r$ increases, the embedding based on $W_1$ becomes increasingly distorted, while the embedding based on the unbalanced model preserves the structure more clearly.
The embeddings with respect to $W_{\SimDisc,20 \SimFR,\SimDisc}$ were also more robust with respect to the choice of the time-scale parameter required in \cite{BelkinNiyogi03}. In Figure~\ref{fig:MetricEmbedding} it was set to five times the nearest neighbour distance.

The metric/discrepancy matrices for the `diagonal' subset $t_0=t_1 \in \TSample$ are shown in Figure \ref{fig:MetricEmbedding}, right. Again, for $\Delta r=0$, the 1-d chain structure is well pronounced in both matrices. As $\Delta r$ increases, the $W_1$ matrices become substantially distorted, while the unbalanced model remains more robust.

An example on real data was illustrated in Figure \ref{fig:transportBio}.
It shows the computed flow of fluorescent molecules between two consecutive frames from a microscopy video of an endothelial cell contact.
With standard $W_1$ transport between normalized images, mass for the expanding structure at the center has to be gathered from throughout the frame, drowning out any local movements.
The flow extracted with $W_{\SimDisc,40 \SimFR, \SimDisc}$ is localized around the growing structure (and other growth/shrinkage events) and clearly indicates the expansion.
It should be noted that part of its mass is still collected from outside the structure, which in this specific example is known to be unbiological. This could only be remedied by a more complex model incorporating additional biological prior knowledge. Nevertheless, unbalanced transport gives a much clearer interpretation than standard $W_1$ transport.

\subsection{Cartoon-texture-noise decomposition}

\begin{figure}
	\centering
	\includegraphics{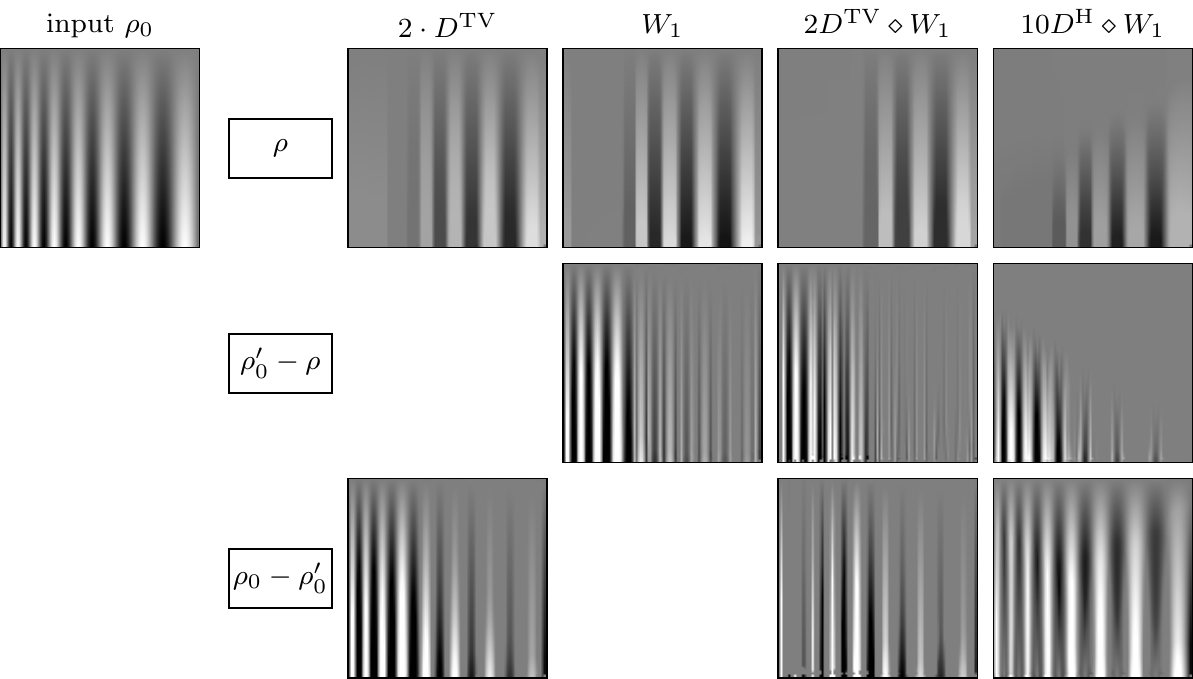}
	\caption{Cartoon, texture and noise parts $(\rho,\rho_0'-\rho,\rho_0'-\rho_0)$ extracted from $\rho_0$ (left) via \eqref{eqn:cartoonTextureDecomp}, using $\lambda_1=5$ and the indicated data fidelity measures. %
	The weights were chosen so as to smooth out roughly the same length scales. %
	For $2 \cdot \SimTV$ and $W_1$ there is no intermediate $\rho_0'$ and we interpret the difference $\rho_0-\rho$ as local (noise) or transport (texture) component respectively.}
	\label{fig:CartTextNoise1}
\end{figure}

Lellmann et al.\ \cite{LellmannKantorovichRubinstein2014} have employed the Kantorovich--Rubinstein discrepency for cartoon-texture decomposition in images.
Expressed in our notation, they considered the problem
\begin{equation}
	\label{eqn:cartoonTextureDecomp}
	\min_{\rho \in \measp(\Omega)} [\Dl \diamond W_1](\rho_0,\rho)+\lambda_1 |\rho|_{\TV}
\end{equation}
for $\Dl=\lambda_0 \SimTV$ (see Section \ref{sec:unbalancedExamples}), in which $\lambda_0$, $\lambda_1 \geq 0$ are two weighting parameters, $\rho_0$ denotes a given image and $|\cdot|_{\TV}$ is the total variation semi-norm.
Here we minimize the same energy, exploring different choices of $\Dl$.
To this end we express the discretized total variation semi-norm and the nonnegativity constraint for $\rho$ as
\begin{equation*}
\max_{\hat\zeta\in\R^{N\times N\times 2}}\langle\hat\zeta,\hat\nabla\hat\rho\rangle-\iota_{|\cdot|\leq\lambda_1}(\hat\zeta)+\iota_{\geq0}(\hat\rho)\,,
\end{equation*}
where $\iota_{\geq0}$ is a more intuitive notation for the indicator function to the set $[0,\infty)^{N\times N}$
(thus $\iota_{\geq0}(\hat\rho)=\infty$ unless all entries of $\hat\rho$ are nonnegative)
and $\iota_{|\cdot|\leq\lambda_1}$ stands for the indicator function to the set $\{\hat\zeta\in\R^{N\times N\times 2}\,|\,|(\zeta_{ij1},\zeta_{ij2})|\leq\lambda_1\,\forall i,j\}$.
Consequently, in the full problem \eqref{eqn:FullDiscreteProblem} we set
\begin{equation*}
F_2[\hat\zeta]=\iota_{|\cdot|\leq\lambda_1}(\hat\zeta)\,,\qquad
G_2[\hat\rho,\hat\xi]=\iota_{\geq0}(\hat\rho)\,,\qquad
K_2=\hat\nabla\,,\qquad
K_3=0\,.
\end{equation*}

\begin{figure}
	\centering
	\includegraphics{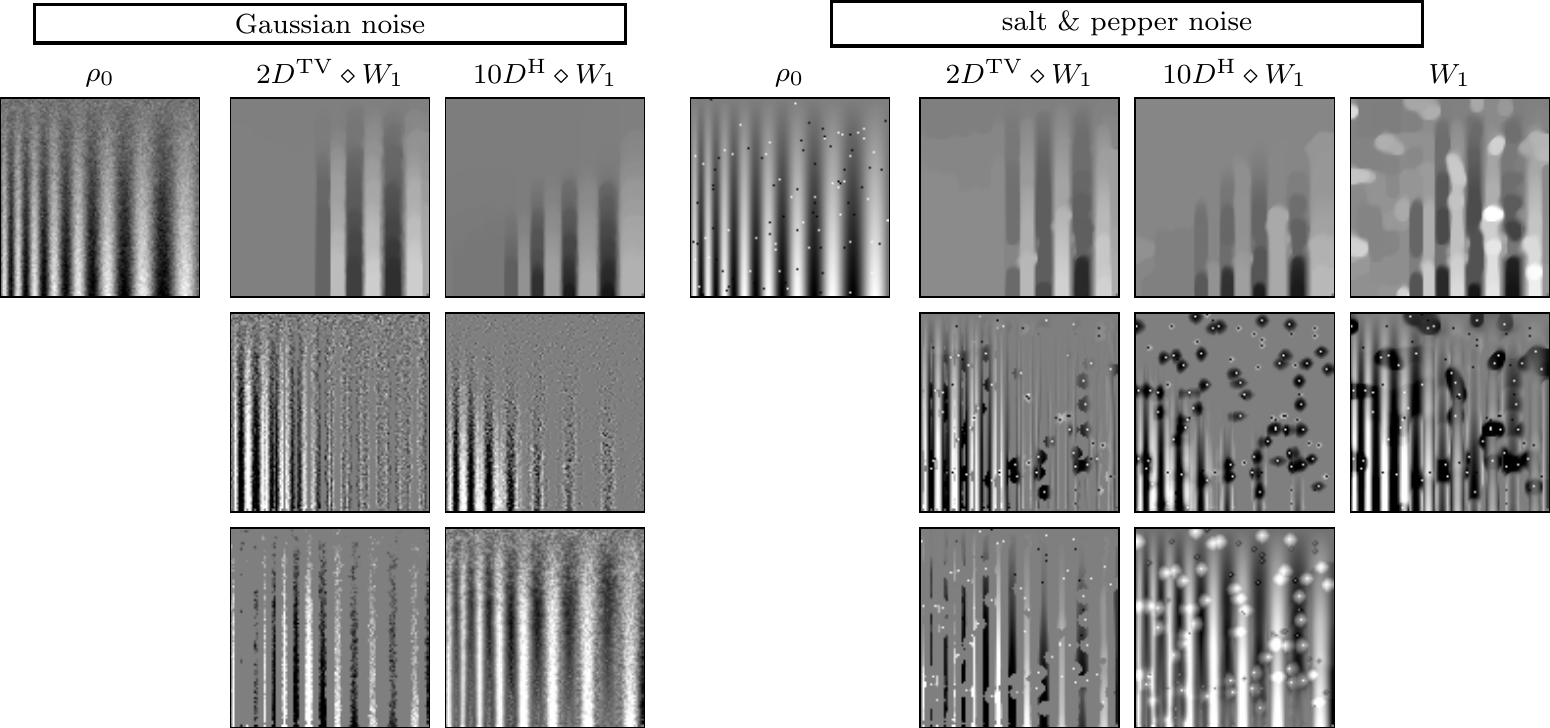}
	\caption{Cartoon, texture and noise parts $(\rho,\rho_0'-\rho,\rho_0'-\rho_0)$ extracted from noisy $\rho_0$ via \eqref{eqn:cartoonTextureDecomp}, using $\lambda_1=5$ and the indicated data fidelity measures. %
	Columns are arranged as in Figure \ref{fig:CartTextNoise1}. %
	\textit{Left:} Original image $\rho_0$ from Figure \ref{fig:CartTextNoise1} with added Gaussian noise with $\sigma=0.05$ (largest amplitude at bottom of image is 1). %
	\textit{Right:} Salt \& pepper noise with corruption probability $0.5\%$ and magnitude 20. %
	For $W_1$ only $\rho$ and the total difference $\rho_0-\rho$ are shown.}
	\label{fig:CartTextNoise2}
\end{figure}

Denote by $\rho_0'$ the implicit intermediate measure of the infimal convolution $\Dl \diamond W_1$ in \eqref{eqn:cartoonTextureDecomp} (cf.~\eqref{eq:SandwichProblem}). Then the effects of $\Dl$ and $W_1$ on the image can be clearly separated. 
$\Dl$ can be thought of as `fixing' single corrupted pixels, hence we may interpret $\rho_0 - \rho_0'$ as the (local) noise component of the image.
$W_1$ can equalize high frequency oscillations with little cost, therefore we interpret $\rho_0'-\rho$ as the texture component.
The remaining smooth image $\rho$ will be called the cartoon part.
The measure $\rho_0'$ can for instance be obtained from the variables in \eqref{eqn:FullDiscreteProblem} via $\rho_0'=\rho_0''-\div \phi$ (see Remark \ref{rem:flowFormulation}).

As an illustrative example we consider the input image $\rho_0$ as shown in Figure~\ref{fig:CartTextNoise1} left, which contains grey value oscillations of different length scales and amplitudes.
If instead of $[\Dl \diamond W_1]$ one uses $\SimTV$ as data fidelity in \eqref{eqn:cartoonTextureDecomp} one obtains the well known $L^1$-TV regularization \cite{TVL12005}, where the $L^1$ term cuts off extreme values and high frequencies to reduce the total variation.
Standard $W_1$ as fidelity in \eqref{eqn:cartoonTextureDecomp} was also studied in \cite{LellmannKantorovichRubinstein2014} where the weight of the $\SimTV$ term was often set so high as to essentially act like $\SimDisc$, that is, there was no separate noise component. High frequencies can be eliminated by $W_1$ with little transport cost. Due to conservation of mass, extreme values are not cut off, but `pressed flat', that is, the mass is distributed more evenly around the extreme value (cf.~the alternating lines in the low frequency area of the texture component).
Using $\lambda_0 \SimTV \diamond W_1$ (with sufficiently small $\lambda_0$), both effects are combined: high frequencies are absorbed by $W_1$, extreme values are truncated by $\SimTV$.
Overall, the effect of regularization is relatively independent of the oscillation amplitudes.

With $\lambda_0 \SimFR \diamond W_1$ the situation is qualitatively different: since the integrand $c(1,\cdot)$ of $\SimFR$ is flat around 1 (cf.~Figure \ref{fig:cost}), low amplitudes can be equalized by $\SimFR$ with little cost and thus appear in the noise component.
For this reason, $\SimJS$, $\SimChi$ and $\SimEp$ produce visually similar results to $\SimFR$.

Examples with added noise are illustrated in Figure \ref{fig:CartTextNoise2}.
Gaussian noise is more consistently separated by $\SimFR$, whereas $\SimTV$ still only filters extreme values. Conversely, for high peaked salt \& pepper noise, $\SimTV$ captures more of the spikes than $\SimFR$. Additionally, due to the non-linearity of $\SimFR$ it is very costly to fully remove a concentrated high spike. Therefore, in this case the peaks appear `smeared out' in the noise component, which is subsequently corrected for by $W_1$.
More generally, image features with high total variation, but with a local zero mean against a constant background can easily be flattened by the $W_1$ term, hence the distinction of noise and texture is not always straightforward.
Despite this ambiguity, both unbalanced models handle the salt \& pepper noise clearly better than the standard $W_1$ fidelity.
The behaviour of $\SimTV$ and $\SimFR$ on Gaussian and salt \& pepper noise is consistent with previous observations in image denoising.
Again, the other discrepancy measures of Table \ref{tab:discrepancies} where $c(1,\cdot)$ is smooth around 1 yield results comparable to $\SimFR$.

\subsection{Enhancement of thin structures}
Thin elongated structures are difficult to extract from noisy images since they are frequently interrupted and do not provide a strong signal.
Here a curvature regularization may help to improve the continuation of elongated structures.
One particular choice is the $\TVX_0$ functional \cite{BrPoWi12}, which penalizes the number of direction changes by lifting the gradient $\nabla u$ of an image $u$ into roto-translation space $\Omega\times S^1$ and applying the regularization there.
For two weights $\lambda_0$, $\lambda_1 \geq 0$, the $\TVX_0^{\lambda_0,\lambda_1}$ functional of an image $u \in L^1(\Omega)$ is given by
\newcommand{\liftedmeas}{\mc{G}_u}
\begin{equation}	
	\label{eqn:TVX0}
	\TVX_0^{\lambda_0,\lambda_1}(u) = \inf_{
			\xi \in\liftedmeas
			}
		\left( \lambda_0 \cdot \|\xi\|_{\meas}
			+ \frac{\lambda_1}{2} \sup_{\substack{
					\chi \in C^\infty_c(\Omega \times S^1):\\
					\|\chi\|_\infty \leq 1}}
				\int_{\Omega \times S^1} \nabla_x \chi(x,\vartheta) \cdot \vartheta \, \d\xi(x,\vartheta)
			\right)
\end{equation}
where
\begin{multline}
	\label{eqn:TXV0Set}
	\liftedmeas = \left\{\vphantom{\int_\Omega}
		\xi\in \measp(\Omega \times S^1) \right|\left.
		\int_\Omega u\,\div \gamma \, \d x + \int_{\Omega \times S^1} \gamma(x) \cdot \vartheta^\perp \d \mu(x,\vartheta) = 0 \right. \\
		\left. \vphantom{\int_\Omega} \tn{for all } \gamma \in C_c^\infty(\Omega,\R^2) \right\}
\end{multline}
and $\vartheta^\perp$ denotes the rotation of orientation $\vartheta \in S^1$ by $\pi/2$.
Intuitively, the set $\liftedmeas$ consists of measures $\xi$, where $\xi$ is the roto-translational lifting of the (distributional) gradient of $u$, rotated pointwise by $\pi/2$.
The $\lambda_0$ term in \eqref{eqn:TVX0} yields the total variation semi-norm of the image $u\in L^1(\Omega)$ (which equals the total variation of its distributional gradient). When $u$ is the indicator function of a polygon, the $\lambda_1$ term penalizes each vertex by $\lambda_1$.
For more details we refer to \cite{BrPoWi12}.

Based on this, for an observed image $\rho_0 \in \measp(\Omega)$ we consider the minimization problem
\begin{equation}\label{eqn:TVX0OptProblem}
	\min_{u \in L^1_+(\Omega)}
		W_{h,g,B}(\rho_0,u) + \TVX_0^{\lambda_0,\lambda_1}(u)\,,
\end{equation}
where in the first summand we interpret $u$ as a measure.

For discretization we consider 16 directions $s_1,\ldots,s_{16}$ in $S^1$ (more directions are possible in a similar way), namely the directions from a pixel to all the 16 second nearest neighbour pixels,
counting counter-clockwise from $s_1=(1,0)$.
Let $\hat s_k=\frac{\Delta x}{|s_k|_\infty}s_k$ with $|\cdot|_\infty$ the supremum norm in $\R^2$, and define for $k \in \{1,\ldots,16\}$
\begin{equation*}
x_{ijk}=x_{ij}+\tfrac{\Delta x}{2}
\begin{cases}
(1,(i \bmod 2)+\frac12)&\text{if }k\in\{2,8,10,16\},\\
((j \bmod 2)+\frac12,1)&\text{if }k\in\{4,6,12,14\},\\
(1,1)&\text{else.}
\end{cases}
\end{equation*}
Now $\xi$ is discretized as
\begin{equation*}
\xi=\sum_{i,j=1}^N\sum_{k=1}^{16}\xi_{ijk}\hat l_{ijk}\,,
\qquad
\hat\xi=(\xi_{ijk})_{i,j=1,\ldots,N}^{k=1,\ldots,16}\,,
\end{equation*}
where $\hat l_{ijk}$ is the unit line measure starting at $(x_{ijk}-\frac{\hat s_k}2,s_k)$ and ending at $(x_{ijk}+\frac{\hat s_k}2,s_k)$.
With this discretization we obtain the identity
\begin{align*}
\int_{\Omega\times S^1}\nabla_x\chi(x,\vartheta)\cdot\vartheta\,\d\xi(x,\vartheta)
&=\sum_{i,j=1}^N\sum_{k=1}^{16}\xi_{ijk}\left[\chi(x_{ijk}+\tfrac{\hat s_k}2,s_k)-\chi(x_{ijk}-\tfrac{\hat s_k}2,s_k)\right]\,.
\end{align*}
Hence, we discretize $\chi$ as the collection of its values at all points $(x_{ijk}\pm\frac{\hat s_k}2,s_k)$.
Moreover, we introduce a variable $\gamma$ which acts as Lagrange multiplier for constraint \eqref{eqn:TXV0Set}. For simplicity, in this application we choose the forward differences discretization \eqref{eqn:forwardDifference}. For the discrete saddle point problem \eqref{eqn:FullDiscreteProblem} we then find
\begin{align*}
\hat\zeta&=(\hat\chi,\hat\gamma)\\
F_2(\hat\zeta)&=\iota_{|\cdot|\leq\lambda_1/2}(\hat\chi)\\
G_2(\hat\rho,\hat\xi)&=\textstyle\iota_{\geq0}(\hat\rho)+\iota_{\geq0}(\hat\xi)+\lambda_0\sum_{i,j=1}^N\sum_{k=1}^{16}|\hat s_k|\xi_{ijk}\\
K_2&=-\hat \nabla\\
\langle \hat \zeta, K_3 \, \hat\xi \rangle &= \langle \hat \chi, B_1 \, \hat \xi \rangle
	+ \langle \hat \gamma, B_2 \, \hat \xi \rangle
\end{align*}
for the operators $B_1$ and $B_2$ given by
\begin{align*}
(B_1^T\hat\chi)_{ijk}&=\chi(x_{ijk}+\tfrac{\hat s_k}2,s_k)-\chi(x_{ijk}-\tfrac{\hat s_k}2,s_k)\,,\\
(B_2\,\hat\xi)_{ij}&=\textstyle\sum_{i=1}^{16} \hat s_k \, \xi_{ijk}\,.
\end{align*}

An example where a vertical line has to be reconstructed from partial observations is shown in Figure \ref{fig:TVX0_1}. The discrepancy measure $\SimTV$ has no notion of `moving mass a little'. Consequently, when using $\SimTV$ as data fidelity term in \eqref{eqn:TVX0OptProblem}, it fails to assemble coherent structures. For low weight the whole signal is removed, for intermediate weight the salt and pepper noise is filtered, for high weight the image remains unchanged.
When using $W_1$ as fidelity measure, the chunks at the image center can be rearranged with little cost to form a straight line. The salt and pepper noise is spread out to reduce the regularization penalty, but remains as blur in the background.
Finally, with $W_{2\SimTV,\SimDisc,\SimDisc}$, the $\SimTV$-term can cope with the salt and pepper noise, and the transport term(s) can form the line, combining the advantages of both fidelity terms.
\begin{figure}
	\centering
	\includegraphics{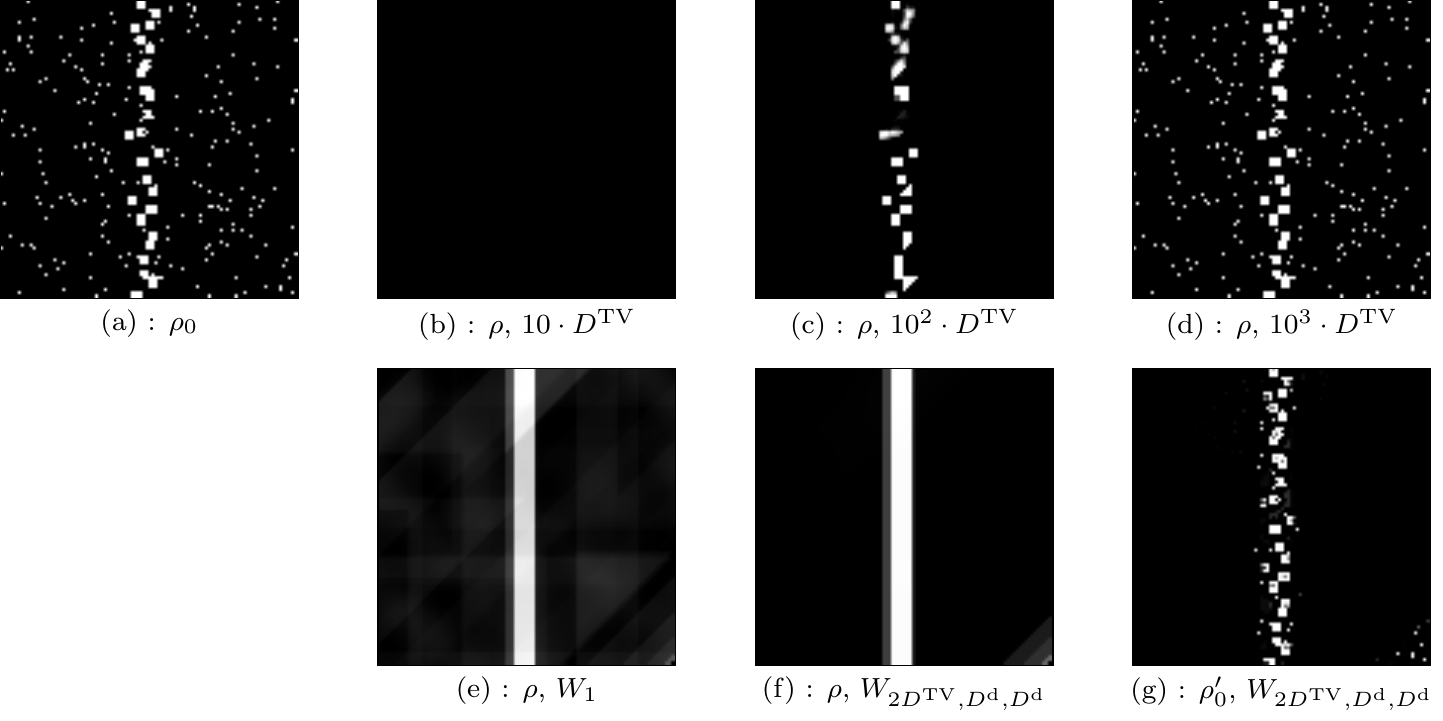}
	\caption{Reconstructing a line from a distorted observation with $\TVX_0^{\lambda_0,\lambda_1}$ regularization. \textit{(a):} input image $\rho_0$, %
	\textit{(b)-(d):} reconstruction $\rho$ with $\SimTV$ as data fidelity in \eqref{eqn:TVX0OptProblem} with different weights. %
	\textit{(e):} reconstruction with $W_1$ as fidelity term. %
	\textit{(f):} reconstruction with $W_{2 \SimTV,\SimDisc,\SimDisc}$. %
	\textit{(g):} intermediate image $\rho_0'$ in infimal convolution \eqref{eq:SandwichProblem}, illustrating the impact of the $\SimTV$ term.}
	\label{fig:TVX0_1}
\end{figure}

\begin{figure}
	\centering
	\includegraphics{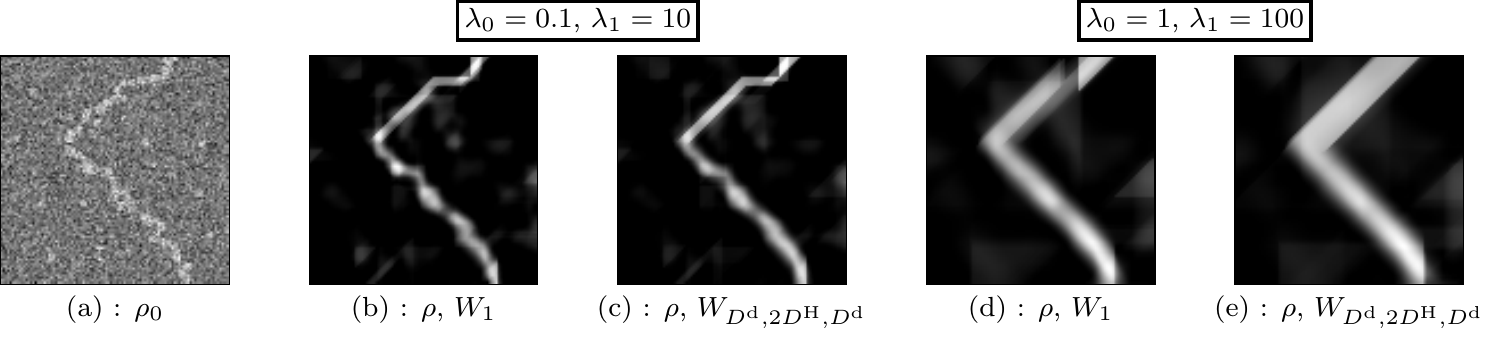}
	\caption{Reconstruction $\rho$ from noisy observation $\rho_0$ with $\TVX_0^{\lambda_0,\lambda_1}$ regularization via \eqref{eqn:TVX0OptProblem} for different transport fidelities and regularization strengths.}
	\label{fig:TVX0_2}
\end{figure}

The difference between standard $W_1$ and unbalanced transport as fidelity term and the impact of the regularization parameters $(\lambda_0,\lambda_1)$ in \eqref{eqn:TVX0OptProblem} are further illustrated in Figure \ref{fig:TVX0_2}.
For both, low and high regularization, $W_{\SimDisc,2\SimFR,\SimDisc}$ has less clutter in the background and reconstructs a more homogeneous line.
As before (Section \ref{sec:flow}), choosing $\SimFR$ in the middle allows for smooth and symmetric balancing of local mass fluctuations.
By increasing the regularization parameters $(\lambda_0,\lambda_1)$ a more schematic line is obtained. The broadening of the line for strong regularization is a typical effect of combining transport fidelity and $\TV$-type regularization and has already been observed in \cite{LellmannKantorovichRubinstein2014}.

\section{Conclusion}\label{sec:conclusion}

Determining the Wasserstein-1 transport distance can be rewritten as a compact minimal cost flow problem which can be solved more efficiently than general optimal transport problems.
However, the restriction to balanced measures is impractical in many applications.
In this article we proposed two ways to generalize $W_1$ transport to unbalanced measures: First, via the efficient dual formulation, containing only local constraints. Second, by an intuitive infimal convolution-type combination with purely local discrepancy measures. We showed that (under suitable assumptions) both classes are actually equivalent and instances can be transformed into each other.
This allows to combine intuitive modelling of unbalanced transport-type discrepancy measures with efficient numerical methods.
Several examples, a discretization scheme and a simple primal-dual algorithm are discussed and numerical experiments demonstrate the usefulness of unbalanced $W_1$-type discrepancy measures.

It would be interesting to obtain a little more intuition on how a particular local discrepancy measure $D$ influences the unbalanced transport depending on its position in the infimal convolution with $W_1$.
As a first step in this direction we will in future work investigate a dynamic reformulation for the proposed models or a subclass thereof, which might separate mass change and mass transport effects in time.
Another interesting question is concerned with the (appropriately rescaled) limit of infinitely many alternating convolution-type combinations of local discrepancy measures and $W_1$.

Bernhard Schmitzer has been supported by the European Research Council (ERC project SIGMA-Vision). 

\bibliography{references}{}
\bibliographystyle{plain}

\end{document}